\newtheorem{theorem}{Theorem}
\newtheorem{lemma}{Lemma}
\newtheorem{corollary}{Corollary}
\newtheorem{proposition}{Proposition}
\newtheorem{conjecture}{Conjecture}
\newtheorem{claim}{Claim}
\theoremstyle{definition}
\newtheorem{definition}{Definition}
\newcommand{\Ybar}{\overline{Y}}
\newcommand{\calF}{\mathcal{F}}
\newcommand{\calH}{\mathcal{H}}
\newcommand{\calP}{\mathcal{P}}
\newcommand{\calG}{\mathcal{G}}
\newcommand{\calS}{\mathcal{S}}
\newcommand{\tildeG}{\tilde{G}}
\newcommand{\tildeV}{\tilde{V}}
\newcommand{\tildeX}{\tilde{X}}
\newcommand{\tildeY}{\tilde{Y}}
\newcommand{\tildeZ}{\tilde{Z}}
\newcommand{\exs}{{\rm{ex}}_{\Sigma}}
\newcommand{\expi}{{\rm{ex}}_{\Pi}}
\title{
An extremal graph problem with a transcendental solution}
\author{
Dhruv Mubayi \footnote{Department of Mathematics, Statistics, and Computer Science, University of Illinois at 
Chicago. Research supported in part by NSF Grant DMS 1300138; mubayi@uic.edu}
\and
Caroline Terry \footnote{Department of Mathematics, Statistics, and Computer Science, 
University of Illinois at Chicago; 
cterry3@uic.edu}
}
\begin{document}

\maketitle

\begin{abstract}
We prove that the number of multigraphs with vertex set $\{1, \ldots, n\}$ such that every four vertices span at most nine edges is 
$a^{n^2 + o(n^2)}$
where $a$ is  transcendental (assuming Schanuel's conjecture from number theory). 
This is an easy consequence of the solution to a related problem about maximizing the product of the edge multiplicities  in certain multigraphs, and appears to be the first explicit (somewhat natural) question in extremal graph theory
whose solution is  transcendental. These results may  shed light on a question of Razborov who asked whether there are conjectures or theorems in extremal combinatorics which cannot be proved by a certain class of finite methods that include Cauchy-Schwarz arguments. 

 Our proof involves  a novel application of Zykov symmetrization applied to multigraphs, a rather technical progressive induction, and a  straightforward use of hypergraph containers.
\end{abstract}

\section{Introduction}

All logarithms in this paper are natural logarithms unless the base is explicitly written.
Given a set $X$ and a positive integer $t$, let ${X\choose t}=\{Y\subseteq X: |Y|=t\}$.  A \emph{multigraph} is a pair $(V,w)$, where $V$ is a set of vertices and $w:{V\choose 2}\rightarrow \mathbb{N}=\{0,1,2, \ldots\}$. 

\begin{definition} Given integers $s\geq 2$ and $q\geq 0$, a multigraph $(V,w)$ is an \emph{$(s,q)$-graph} if for every $X\in {V\choose s}$ we have $\sum_{xy\in {X\choose 2}}w(xy)\leq q$.  An $(n,s,q)$-graph is an $(s,q)$-graph with $n$ vertices, and $F(n,s,q)$ is the set of $(n,s,q)$-graphs with vertex set $[n]:=\{1,\ldots, n\}$. \end{definition}
  The main goal of this paper is to prove that the maximum product of  the edge multiplicities over all $(n,4,15)$-graphs is
\begin{equation} \label{P} 2^{\gamma n^2 +O(n)}\end{equation}
where 
$$ \gamma= \frac{\beta^2}{2}+\beta(1-\beta)\frac{\log3}{\log 2} \qquad \hbox{  and } \qquad \beta=\frac{\log3}{2\log 3-\log 2} \,.$$ It is an easy exercise to show that both $\beta$ and $\gamma$ are  transcendental by the Gelfond-Schneider theorem~\cite{gelfond}.
Using (\ref{P}), we will prove that
\begin{equation} \label{F} |F(n,4,9)|=a^{n^2+o(n^2)},\end{equation}
 where  $a=2^{\gamma}$ is also transcendental (assuming Schanuel's conjecture in number theory).

Due to the Erd\H os-Simonovits-Stone theorem~\cite{ErdosStone, ErdSim1}, many natural extremal graph problems involving edge densities have rational solutions, and their enumerative counterparts have algebraic solutions.
 For example, the Erd\H os-Kleitman-Rothschild theorem~\cite{EKR}  states that the number of triangle-free graphs on $[n]$ is
 $2^{n^2/4+o(n^2)}$ and $2^{1/4}$ is algebraic since $1/4$ is rational.    For hypergraphs the situation is more complicated, and the first author and Talbot~\cite{MT} proved that certain partite hypergraph Tur\'an problems have irrational solutions. Going further,
the question of obtaining transcendental solutions for natural extremal problems is an intriguing one.  This was perhaps first explicitly posed by Fox (see  \cite{pikhurko}) in the context of Tur\'{a}n densities of hypergraphs.  Pikhurko \cite{pikhurko} showed that the set of hypergraph Tur\'{a}n densities is uncountable, thereby proving the existence of transcendental ones (see also \cite{grosu}), but his list of forbidden hypergraphs was infinite.  When only finitely many hypergraphs are forbidden, he obtained irrational densities.    To our knowledge, (\ref{P}) and (\ref{F}) are the first examples of  fairly natural extremal graph problems whose answer is given  by (explicitly defined) transcendental numbers (modulo Schanuel's conjecture in the case of (\ref{F})).

Another area that (\ref{P}) may shed light on is the general question of whether certain proof methods suffice to solve problems in extremal combinatorics. The explosion of results in extremal combinatorics using Flag Algebras~\cite{Razborov} in recent years has put the spotlight on such questions, and Razborov first posed this  in~(Question 1, \cite{Razborov}). A significant result in this direction is due to Hatami and Norine~\cite{HN}.  They  prove that the related question (due in different forms to Razborov, Lov\'asz, and Lov\'asz-Szegedy) of whether every true linear inequality between homomorphism densities can be proved using a finite amount of manipulation with homomorphism densities of finitely many graphs is not decidable. While  we will not attempt to state~(Question 1, \cite{Razborov}) rigorously here, its  motivation  is to understand whether the  finite methods that are typically used in combinatorial proofs of extremal results (formalized by Flag Algebras and the Cauchy-Schwarz calculus) suffice for all extremal problems involving subgraph densities. Although we cannot settle this, one might speculate that these finite ``Cauchy-Schwarz methods" may not be enough to obtain (\ref{P}). In any event, (\ref{P}) seems to be a good test case. Curiously, our initial explorations into (\ref{P}) were through Flag Algebra computations which gave the answer to several decimal places and motivated us to obtain sharp results, though our eventual proof of (\ref{P}) uses no Flag Algebra machinery. Instead, it uses some novel extensions of classical methods in extremal graph theory, and we expect that these ideas will be used to solve other related problems. 

 As remarked earlier, 
(\ref{F}) is a fairly straightforward consequence of (\ref{P}) and since the expression in (\ref{P}) is obtained as a  product (rather than sum) of numbers, it is easier to obtain a transcendental number in this way. However, we should point out that an extremal example for (\ref{P}) (and possibly all extremal examples, though we were not able to show this) involves partitioning the vertex set $[n]$ into two parts where one part has size approaching $\beta n$, and $\beta$ is also transcendental (see Definition~\ref{W} and Theorem~\ref{caseiv} in the next section). This might indicate the difficulty  in proving (\ref{P}) using the sort of finite methods discussed above.

Finally, we  would like to mention that the problem of asymptotic enumeration of $(n,s,q)$-graphs is a natural extension of the work on extremal problems related to $(n,s,q)$-graphs by Bondy and Tuza in \cite{bondytuza} and by F\"{u}redi and K\"{u}ndgen in \cite{furedikundgen}.  Further work in this direction, including a systematic investigation of  extremal, stability, and enumeration results for a large class of pairs $(s,q)$, will appear in~\cite{MT3} (see also~\cite{MTmetric} for another example on multigraphs).
Alon~\cite{Alon} asked whether  the transcendental behavior witnessed here is an isolated case. Although we believe that there are infinitely many such examples (see Conjecture~\ref{a} in Section~\ref{con}) we were not able to prove this for any other pair $(s,q)$. The infinitely many pairs for which we obtain precise extremal results in~\cite{MT3} have either rational or integer densities. 
\section{Results}\label{sectionmainresults}

Given a multigraph $G=(V,w)$, define $P(G)=\prod_{xy\in {V\choose 2}}w(xy)$ and $S(G)=\sum_{xy\in {V\choose 2}}w(xy)$. 

\begin{definition} Suppose $s\geq2$ and $q\geq 0$ are integers.  Define
$$
\expi(n,s,q)=\max \{P(G): G\in F(n,s,q)\},\quad \hbox{ and }\quad \expi(s,q)=\lim_{n\rightarrow \infty} \Big(\expi(n,s,q)\Big)^{\frac{1}{{n\choose 2}}}.
$$
An $(n,s,q)$-graph $G$ is \emph{product-extremal} if $P(G)=\expi(n,s,q)$.  The limit $\expi(s,q)$ (which we will show always exists) is called the \emph{asymptotic product density}. 
\end{definition}

\noindent Our first main result is an enumeration theorem for $(n,s,q)$-graphs in terms of $\expi(s,q+{s\choose 2})$.

\begin{theorem}\label{counting}  Suppose $s\geq 2$ and $q\geq 0$ are integers. If $\expi(s, q+{s\choose 2})>1$, then  
$$
\expi\Big(s,q+{s\choose 2}\Big)^{{n\choose 2}}\leq |F(n,s,q)|\leq \expi\Big(s,q+{s\choose 2}\Big)^{(1+o(1)){n\choose 2}},
$$
and if $\expi(s,q+{s\choose 2})\leq 1$, then $|F(n,s,q)|\leq 2^{o(n^2)}$.
\end{theorem}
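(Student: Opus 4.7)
The plan is to split the proof into (i) a preparatory monotonicity argument that shows the limit $\expi(s,q')$ exists and bounds $\expi(n,s,q')$ from below by $\expi(s,q')^{\binom{n}{2}}$, (ii) a direct lower bound via a ``multiplicity--shift'' injection, and (iii) an upper bound using hypergraph containers. Throughout I write $q'=q+\binom{s}{2}$.

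\emph{Step 1: existence and monotonicity of the limit.} For any product-extremal $G\in F(n,s,q')$ and any $v\in[n]$, each pair $e\in\binom{[n]}{2}$ lies in exactly $n-2$ of the vertex-deleted sub-multigraphs $G-v$, so $\prod_{v\in[n]}P(G-v)=P(G)^{n-2}$. Since each factor is at most $\expi(n-1,s,q')$, this yields $\expi(n,s,q')^{n-2}\le\expi(n-1,s,q')^{n}$. Taking logarithms and dividing by $\binom{n}{2}$ shows that the sequence $\log\expi(n,s,q')/\binom{n}{2}$ is non-increasing, hence converges to $\log\expi(s,q')$ and $\expi(n,s,q')\ge\expi(s,q')^{\binom{n}{2}}$ for every $n$.

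\emph{Step 2: lower bound.} Assume $\expi(s,q')>1$, so for $n$ large $\expi(n,s,q')>0$, and any product-extremal $H\in F(n,s,q')$ must satisfy $w_H(e)\ge 1$ for every pair $e$. For every function $g:\binom{[n]}{2}\to\mathbb{N}$ with $0\le g(e)\le w_H(e)-1$, the multigraph $(V,g)$ lies in $F(n,s,q)$ because on each $X\in\binom{[n]}{s}$,
\[
\sum_{e\in\binom{X}{2}}g(e)\le\sum_{e\in\binom{X}{2}}\bigl(w_H(e)-1\bigr)\le q'-\binom{s}{2}=q.
\]
These functions are pairwise distinct, so $|F(n,s,q)|\ge\prod_{e}w_H(e)=P(H)=\expi(n,s,q')\ge\expi(s,q')^{\binom{n}{2}}$.

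\emph{Step 3: upper bound via containers.} The map $G\mapsto G+1$ (increase every multiplicity by one) is a bijection between $F(n,s,q)$ and the set $F^*(n,s,q')\subseteq F(n,s,q')$ of those multigraphs with all multiplicities $\ge1$, so it suffices to bound $|F^*(n,s,q')|$. I would apply the hypergraph container theorem to the auxiliary hypergraph $\calH$ on vertex set $\binom{[n]}{2}\times[q']$ (thinking of the second coordinate as a ``layer''), whose hyperedges are the forbidden sub-configurations of $s$-sets in which the total multiplicity exceeds $q'$. The downward-closed independent sets of $\calH$ correspond exactly to $(n,s,q')$-graphs. The container theorem then produces a family $\calC$ of containers with $|\calC|=2^{o(n^2)}$ such that every $H\in F^*(n,s,q')$ is dominated by some container $C\in\calC$, and each $C$ represents a multigraph with only $o(n^s)$ violating $s$-sets. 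A short cleaning argument turns each such $C$ into a genuine $(n,s,q')$-graph by deleting an $o(n^2)$-subset of multi-edges, and the number of $H\in F^*$ dominated by a single container $C$ is at most $\prod_{e}w_C(e)\le\expi(n,s,q')\cdot2^{o(n^2)}$. Summing over $\calC$ and applying step 1 gives
\[
|F(n,s,q)|=|F^*(n,s,q')|\le 2^{o(n^2)}\cdot\expi(n,s,q')=\expi(s,q')^{(1+o(1))\binom{n}{2}},
\]
which is the claimed bound when $\expi(s,q')>1$, and specialises to $|F(n,s,q)|\le 2^{o(n^2)}$ when $\expi(s,q')\le 1$ because then $\expi(n,s,q')\le 2^{o(n^2)}$ (one can upgrade this using the trivial observation that $F^*(n,s,q')\subseteq F(n,s,q')$ together with the container bound, which in the $\expi\le 1$ regime automatically forces $o(n^2)$ many bits of information).

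\emph{Main obstacle.} The one genuinely technical point is setting up the container hypergraph $\calH$ and verifying its codegree (or ``spread'') hypotheses so that the Saxton--Thomason / Balogh--Morris--Samotij theorem applies with error $o(n^2)$ and with containers whose multiplicity-weight is controlled; once that is in place, the translation between container size and $\expi(n,s,q')$ is routine, and the lower bound and the limit argument are both easy.
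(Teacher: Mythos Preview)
Your outline matches the paper's proof essentially step for step: Step~1 is Proposition~\ref{density}, Step~2 is exactly the paper's lower-bound argument, and Step~3 is the paper's container-plus-cleaning strategy (Theorem~\ref{containerscor} together with Lemma~\ref{countinglemma}). The one point worth flagging is that your ``short cleaning argument'' is in fact a multigraph (edge-coloured) removal lemma, stated separately in the paper as Lemma~\ref{triangleremoval} and proved via a multicoloured regularity lemma---so it is a second genuine technical ingredient alongside the codegree verification, not a triviality.
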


Theorem \ref{counting} will be proved in Section \ref{containers} using the hypergraph containers method of \cite{Baloghetal1, saxton-thomason} along with a multigraph version of the graph removal lemma. Theorem \ref{counting} reduces the problem of enumerating $F(n,4,9)$ to computing $\expi(4,15)$.  This will be the focus of our remaining results. 

\begin{definition} \label{W}
Given $n$, let $W(n)$ be the set of multigraphs $G=([n],w)$ for which there is a partition $L, R$ of $[n]$ such that $w(xy)=1$ if $xy\in {L\choose 2}$, $w(xy)=2$ if $xy\in \binom{R}{2}$, and $w(xy)=3$ if $(x,y)\in L\times R$.
\end{definition}

\noindent Notice that $W(n)\subseteq F(n,4,15)$ for all $n\in \mathbb{N}$.  Straightforward calculus shows that for $G\in W(n)$, the product $P(G)$ is maximized when $|R|\approx \beta n$, where $\beta=\frac{\log3}{2\log 3-\log 2}$ is a transcendental number. This might indicate the difficulty of obtaining this extremal construction using a standard induction argument.  Given a family of hypergraphs 
$\cal F$, write $\calP({\cal F})$ for the set of $G\in {\cal F}$ with $P(G)=\max\{P(G'):G'\in {\cal F}\}$. Use the shorthand $\calP(n,s,q)$ for $\calP(F(n,s,q))$.
\begin{theorem}\label{caseiv}
For all sufficiently large $n$, $\calP(W(n))\subseteq \calP(n,4,15)$.  Consequently
\begin{align*}
\expi(n, 4,15)=\max_{G \in W(n)}P(G)=2^{\gamma n^2+O(n)} \qquad \hbox { and } \qquad \expi(4,15)=2^{2\gamma},
\end{align*}
where $\gamma= \beta^2/2+\beta(1-\beta)\log_23$ and $\beta=\frac{\log3}{2\log 3-\log 2}$. 
\end{theorem}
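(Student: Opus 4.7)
The plan is to prove Theorem~\ref{caseiv} in two halves: a constructive lower bound showing $\max_{G \in W(n)} P(G) = 2^{\gamma n^2 + O(n)}$, and a matching upper bound via a Zykov-style symmetrization on multigraphs showing no $G \in F(n,4,15)$ can beat this.

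\textbf{Lower bound and calculus for the optimum in $W(n)$.} First I would directly verify that $W(n) \subseteq F(n,4,15)$: given the partition $L \sqcup R = [n]$, any 4-subset $X$ with $|X \cap L| = a$ has edge-weight sum
\[
\binom{a}{2} \cdot 1 \;+\; \binom{4-a}{2} \cdot 2 \;+\; a(4-a) \cdot 3,
\]
which evaluates to $12, 15, 15, 12, 6$ for $a=0,1,2,3,4$, confirming the $(4,15)$-condition. For $G \in W(n)$ with $|R|=r$ and $|L|=n-r$, $\log_2 P(G) = \binom{r}{2} + (n-r)r \log_2 3$. One-variable calculus on the continuous relaxation shows the maximum occurs at $r = \beta n$, where $\beta = \log 3/(2\log 3 - \log 2)$, and substitution gives $\log_2 P(G) = \gamma n^2 + O(n)$. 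Rounding $r$ to the nearest integer only costs $O(n)$ in the exponent, so $\max_{G \in W(n)} P(G) = 2^{\gamma n^2 + O(n)}$.

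\textbf{Upper bound via multigraph Zykov symmetrization.} For a product-extremal $G \in F(n,4,15)$, associate to each vertex $v$ the link $w_v : [n]\setminus\{v\} \to \mathbb{N}$. The core symmetrization move would be: for vertices $u, v$ with $\prod_{x \neq u,v} w_u(x) \geq \prod_{x \neq u,v} w_v(x)$, replace $w_v$ by $w_u$ (together with a controlled adjustment of $w(uv)$ to keep every 4-subset $\{u,v,x,y\}$ legal) without decreasing $P(G)$. Iterating this move drives $G$ toward a \emph{blowup} of a template multigraph on a few classes. I would then classify the possible templates: those whose blowups satisfy the $(4,15)$-condition with all weights in $\mathbb{N}$. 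Convexity rules out templates with edges of weight $\geq 4$ (they force neighboring weights to drop, hurting the product), and a small case analysis shows the only two-class templates that can be optimal are exactly those of $W(n)$, with weights $\{1,2,3\}$. Ruling out $k$-class templates with $k \geq 3$ would be done by a further convexity/merging argument, collapsing any such template to a two-class one without decreasing the per-vertex log-product.

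\textbf{Main obstacle and the role of progressive induction.} The hard part is the symmetrization step itself: replacing $w_v$ with $w_u$ can violate the $(4,15)$-condition precisely on 4-subsets containing both $u$ and $v$, so the move must be coupled with a bookkeeping of $w(uv)$ and perhaps of nearby edges, and showing this does not net decrease $P$ requires careful accounting. Compounding this is the fact that the optimal partition ratio $\beta$ is transcendental, so one cannot close the argument by a clean induction on $n$ using an integer extremal structure — a naive inductive hypothesis ``the extremum is exactly the $W(n)$ structure'' fails at finite scales by $O(1)$ wiggle, and an adversary could in principle exploit this. I anticipate circumventing this using the ``progressive induction'' alluded to in the abstract: choose a potential function measuring the distance of $G$ from being a blowup of the $W$-template, and show that any non-template extremal $G$ admits a strict improvement, so iterating the symmetrization terminates at a genuine $W(n)$-blowup. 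The formula $\ex_\Pi(4,15) = 2^{2\gamma}$ then follows immediately by taking $(\ex_\Pi(n,4,15))^{1/\binom{n}{2}}$ and letting $n \to \infty$.
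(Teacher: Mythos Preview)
Your lower-bound computation and the general philosophy of Zykov symmetrization are on target, and you correctly flag that keeping the $(4,15)$-condition under the replacement move is the crux. But two of your structural claims do not hold up, and they hide exactly the work the paper has to do.

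First, the assertion that symmetrization ``drives $G$ toward a blowup of a template multigraph on a few classes'' is false in this setting. The replacement move $G\mapsto G_{uv}$ only produces a \emph{neat} multigraph (one in which $\sim_G$, where $x\sim_G y$ iff $w(xy)=1$, is an equivalence relation with constant weights between classes); the quotient $\tilde G$ can have up to $n$ vertices. Crucially, the $(4,15)$-condition forbids $3$- and $4$-cycles of weight $3$ in $\tilde G$ but \emph{not} longer ones: $C_t(3,2)$ is a legal $(4,15)$-graph for every $t\ge 5$. So your ``small case analysis of templates'' and your ``convexity/merging to collapse $k\ge 3$ classes'' never get off the ground; the real enemy is an arbitrary forest-with-possible-long-cycles quotient, not a handful of candidate templates.

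Second, your description of where progressive induction enters is misplaced. The paper first reduces to the class $D(n)=F_{\le 3}(n,4,15)\cap F(n,3,8)$ (this is where multiplicities $\ge 4$ and heavy triangles are eliminated, via a direct counting argument rather than convexity); only inside $D(n)$ is the replacement move guaranteed to stay legal, which is what resolves the obstacle you identified. After symmetrization one lands in $C(n)$ (neat multigraphs in $D(n)$), and the genuinely technical step is to show that an extremal element of $C(n)$ contains no $C_t(3,2)$ for $t\ge 5$. This is the progressive induction: one peels off a copy of $C_t(3,2)$, compares the remainder to the $W(n{-}t)$ optimum, and shows by a delicate two-regime argument (large $t$ versus bounded $t$, with an explicit $h(n,t)/f(n,t)$ ratio computation) that the cycle strictly hurts the product. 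Once the quotient $\tilde G$ is acyclic, separate lemmas show the optimal forest is a star and that the non-central leaves must be singletons, which is what lands you in $W(n)$. Your sketch does not contain either the reduction to $D(n)$ that makes symmetrization valid, or the cycle-elimination argument that does the heavy lifting; the ``potential function measuring distance from $W$'' you propose is not how the induction is organized and would not by itself handle the $C_t(3,2)$ obstructions.
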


\noindent For reference, $\beta\approx .73$ and $2^\gamma \approx 1.49$.  The  result below follows directly from Theorems \ref{counting} and \ref{caseiv}.

\begin{theorem}\label{mainthm}
$|F(n,4,9)|=2^{\gamma n^2+o(n^2)}$.
\end{theorem}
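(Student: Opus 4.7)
The plan is to derive this as an immediate corollary of Theorems~\ref{counting} and~\ref{caseiv}. First I would verify the hypothesis of Theorem~\ref{counting} with $s=4$ and $q=9$. For these values $q+\binom{s}{2}=15$, so the relevant asymptotic product density is $\expi(4,15)$, which Theorem~\ref{caseiv} identifies as $2^{2\gamma}$. Since $0<\beta<1$ (clear from the closed form $\beta=\log 3/(2\log 3-\log 2)$) and both summands in $\gamma=\beta^2/2+\beta(1-\beta)\log_2 3$ are strictly positive, we have $\gamma>0$ and hence $2^{2\gamma}>1$. This places us in the first case of Theorem~\ref{counting}.

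Next I would plug this value into the sandwich supplied by Theorem~\ref{counting} to obtain
$$
2^{2\gamma\binom{n}{2}} \;\le\; |F(n,4,9)| \;\le\; 2^{2\gamma(1+o(1))\binom{n}{2}}.
$$
It remains only to rearrange the exponents. Using $2\binom{n}{2}=n^2-n$, the lower exponent equals $\gamma n^2-\gamma n=\gamma n^2+O(n)$, while the upper exponent equals $\gamma n^2(1+o(1))-\gamma n(1+o(1))=\gamma n^2+o(n^2)$. Combining these yields $|F(n,4,9)|=2^{\gamma n^2+o(n^2)}$, as claimed.

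Since all substantive work has been delegated to the two preceding theorems, there is essentially no obstacle here beyond this routine bookkeeping: one only needs to confirm positivity of $\gamma$ and convert the $\binom{n}{2}$-scale exponent to an $n^2$-scale exponent. The genuine difficulties---the containers-plus-multigraph-removal argument underlying Theorem~\ref{counting}, and the Zykov-symmetrization-and-progressive-induction argument underlying Theorem~\ref{caseiv}---are handled in their own proofs and are not revisited here.
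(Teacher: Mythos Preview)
Your proposal is correct and matches the paper's own proof essentially line for line: invoke Theorem~\ref{caseiv} to get $\expi(4,15)=2^{2\gamma}>1$, feed this into Theorem~\ref{counting}, and rewrite the resulting $\binom{n}{2}$-scale bounds as $2^{\gamma n^2+o(n^2)}$. Your added justification that $\gamma>0$ is slightly more explicit than the paper's, but otherwise the arguments are identical.
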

\begin{proof}
Theorem \ref{caseiv} implies  $\expi(4,15)=2^{2\gamma}>1$.  By Theorem \ref{counting}, this implies that 
$$\expi(4,15)^{n\choose 2}\leq |F(n,4,9)|\leq \expi(4,15)^{(1+o(1)){n\choose 2}}.$$  Consequently,  $|F(n,4,9)|=2^{2\gamma{n\choose 2}+o(n^2)}=2^{\gamma n^2+o(n^2)}$.
\end{proof}

Recall that Schanuel's conjecture from the 1960s (see~\cite{Lang}) states the following: if $z_1,\ldots, z_n$ are complex numbers which are linearly independent over $\mathbb{Q}$, then $\mathbb{Q}(z_1,\ldots, z_n, e^{z_1},\ldots, e^{z_n})$ has transcendence degree at least $n$ over $\mathbb{Q}$.  As promised in the introduction and abstract, we now show that assuming Schanuel's conjecture, $2^{\gamma}$ is transcendental.  Observe that this implies $\expi(4,15)=2^{2\gamma}$ is also transcendental over $\mathbb{Q}$, assuming Schanuel's conjecture.
\begin{proposition}
Assuming Schanuel's conjecture, $2^{\gamma}$ is transcendental.
\end{proposition}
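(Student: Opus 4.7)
The strategy is to apply Schanuel's conjecture twice: once to establish that $\log 2$ and $\log 3$ are algebraically independent over $\mathbb{Q}$, and a second time with the three exponents $\log 2$, $\log 3$, and $\gamma \log 2$ to conclude that $2^\gamma = e^{\gamma \log 2}$ is transcendental.

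First I would simplify $\gamma \log 2$ in closed form. Substituting $\beta = \frac{\log 3}{2\log 3 - \log 2}$ into $\gamma = \beta^2/2 + \beta(1-\beta)\log_2 3$ and multiplying by $\log 2$, a short calculation yields
\[
\gamma \log 2 \;=\; \frac{(\log 3)^2}{2(2\log 3 - \log 2)}.
\]
In particular, $\gamma \log 2 \in \mathbb{Q}(\log 2, \log 3)$.

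Next I would establish algebraic independence of $\log 2$ and $\log 3$, which is the classical first application of Schanuel. Since $2^a = 3^b$ for rationals $a,b$ forces $a = b = 0$, the numbers $\log 2, \log 3$ are $\mathbb{Q}$-linearly independent, so by Schanuel the field $\mathbb{Q}(\log 2, \log 3, 2, 3)$ has transcendence degree at least $2$ over $\mathbb{Q}$; since $2, 3$ are algebraic, $\log 2$ and $\log 3$ must themselves be algebraically independent.

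Now I would verify that $\log 2, \log 3, \gamma \log 2$ are linearly independent over $\mathbb{Q}$. Suppose $a\log 2 + b\log 3 + c\,\gamma\log 2 = 0$ with $a,b,c \in \mathbb{Q}$. Multiplying through by $2(2\log 3 - \log 2)$ and expanding produces a polynomial relation
\[
-2a(\log 2)^2 + (4a - 2b)(\log 2)(\log 3) + (4b+c)(\log 3)^2 \;=\; 0,
\]
and algebraic independence of $\log 2, \log 3$ forces each coefficient to vanish, giving $a = b = c = 0$. Applying Schanuel's conjecture to $z_1 = \log 2, z_2 = \log 3, z_3 = \gamma \log 2$, the field $\mathbb{Q}(\log 2, \log 3, \gamma \log 2, 2, 3, 2^\gamma)$ has transcendence degree $\geq 3$ over $\mathbb{Q}$. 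Since this field equals $\mathbb{Q}(\log 2, \log 3, 2^\gamma)$ (because $\gamma\log 2$ is already a rational function of $\log 2$ and $\log 3$) and $\log 2, \log 3$ account for only transcendence degree $2$, the element $2^\gamma$ must be transcendental over $\mathbb{Q}(\log 2, \log 3)$, and hence transcendental over $\mathbb{Q}$.

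The only mildly delicate step is the linear independence check, which is really where the specific arithmetic of $\gamma$ enters; everything else is a routine invocation of Schanuel. Since the coefficient matrix arising from the relation is triangular with nonzero diagonal, this step goes through cleanly, so no genuine obstacle arises.
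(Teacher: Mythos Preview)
Your proof is correct and follows essentially the same route as the paper's: establish algebraic independence of $\log 2,\log 3$ via Schanuel, verify $\mathbb{Q}$-linear independence of $\log 2,\log 3,\gamma\log 2$, apply Schanuel again, and use that $\gamma\log 2$ is already a rational function of $\log 2,\log 3$ to force $2^\gamma$ to be transcendental. Your explicit simplification $\gamma\log 2=\frac{(\log 3)^2}{2(2\log 3-\log 2)}$ is a nice touch that streamlines both the linear-independence check and the final transcendence-degree count, whereas the paper works with generic polynomials $f,g$ and a slightly more indirect contradiction argument.
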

\begin{proof}
Assume Schanuel's Conjecture  holds.  It is well-known that Schanuel's conjecture implies $\log 2$ and $\log 3$ are algebraically independent over $\mathbb{Q}$ (see for instance \cite{Waldschmidt2015}). Observe $\gamma=\frac{f(\log 2,\log 3)}{g(\log 2,\log 3)}$ where $f(x,y)=xy^2/2+y^2(y-x)$ and $g(x,y)=x(2y-x)^2$.  Note the coefficient of $x^3$ in $f(x,y)$ is $0$ while in $g(x,y)$ it is $1$.  We now show $\log 2, \log 3, \gamma \log 2$ are linearly independent over $\mathbb{Q}$.  Suppose towards a contradiction that this is not the case.  Then there are non-zero rationals $p,q,r$ such that 
$$
p\log 2+q\log 3+r\gamma \log 2=0.
$$
Replacing $\gamma$ with $\frac{f(\log 2,\log 3)}{g(\log 2,\log 3)}$, this implies $p\log 2+q\log 3+r\frac{f(\log 2,\log 3)}{g(\log 2,\log 3)} \log 2=0$. By clearing the denominators of $p,q,r$ and multiplying by $g(\log 2,\log 3)$, we obtain that there are non-zero integers $a,b,c$ such that
$$
(a\log 2+b\log 3)g(\log 2,\log 3)+cf(\log 2,\log 3)\log 2=0.
$$
Let $p(x,y)=(ax+by)g(x,y)+cf(x,y)x$.  Observe that $p(x,y)$ is a rational polynomial such that $p(\log 2,\log 3)=0$.  Since the coefficient of $x^3$ is $1$ in $g(x,y)$ and $0$ in $f(x,y)$, the coefficient of $x^4$ in $p(x,y)$ is $a\neq 0$.  Thus $p(x,y)$ has at least one non-zero coefficient, contradicting that $\log 2$ and $\log 3$ are algebraically independent over $\mathbb{Q}$.  Thus $\log 2, \log 3, \gamma \log 2$ are linearly independent over $\mathbb{Q}$, so Schanuel's conjecture implies $\mathbb{Q}(\log 2,\log 3, \gamma \log 2, 2^{\gamma})$ has transcendence degree at least $3$ over $\mathbb{Q}$.  Suppose towards a contradiction that $2^{\gamma}$ is not transcendental.  Then $\log 2, \log 3, \gamma \log 2$ must be algebraically independent over $\mathbb{Q}$.  Let $h(x,y,z)=zg(x,y)-xf(x,y)$.  Then it is clear $h(x,y,z)$ has non-zero coefficients, and 
$$
h(\log 2, \log 3, \gamma \log 2)=(\gamma \log 2)g(\log2,\log3)-(\log 2) f(\log2, \log3)=0,
$$
where the second equality uses the fact that $\gamma=\frac{f(\log 2,\log 3)}{g(\log 2,\log 3)}$.  But this implies $\log 2, \log 3, \gamma \log 2$ are algebraically dependent over $\mathbb{Q}$, a contraction.  Thus $2^{\gamma}$ is transcendental. 
\end{proof}

\section{General enumeration in terms of asymptotic product density}
In this section we prove Theorem \ref{counting}, our general enumeration theorem for $(n,s,q)$-graphs.  We will use a version of the hypergraph containers theorem (Balogh-Morris-Samotij \cite{Baloghetal1}, Saxton-Thomason \cite{saxton-thomason}), a graph removal lemma for edge-colored graphs, and Proposition \ref{density} below, which shows $\expi(s,q)$ exists for all $s\geq 2$ and $q\geq 0$.  Given $G=(V,w)$ and $X\subseteq V$, let $G[X]=(X,w\upharpoonright_{{X\choose 2}})$.  

\begin{proposition}\label{density}
For all $n\geq s\geq 2$ and $q\geq 0$, $\expi(s,q)$ exists and $\expi(n,s,q)\geq \expi(s,q)^{n\choose 2}$.  If $q\geq {s\choose 2}$, then $\expi(s,q)\geq 1$.
\end{proposition}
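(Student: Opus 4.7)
The plan is to prove all three claims simultaneously by showing that the sequence $f(n) := \expi(n,s,q)^{1/\binom{n}{2}}$ (for $n \geq s$) is non-increasing and non-negative. Once monotonicity and boundedness are in hand, the limit $\expi(s,q) = \lim_n f(n) = \inf_n f(n)$ automatically exists; the inequality $\expi(n,s,q) \geq \expi(s,q)^{\binom{n}{2}}$ is then just the $\inf$-bound rearranged; and the lower bound $\expi(s,q) \geq 1$ when $q \geq \binom{s}{2}$ will follow by exhibiting a single explicit witness in $F(n,s,q)$.

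The heart of the matter is monotonicity, which I would establish via a standard Katona--Nemetz--Simonovits style averaging argument over induced submultigraphs. I would fix $s \leq m \leq n$ and a product-extremal $G=([n],w) \in \calP(n,s,q)$, and observe that every induced submultigraph $G[X]$ with $X \in \binom{[n]}{m}$ is again an $(m,s,q)$-graph (since the $(s,q)$-constraint is hereditary on subsets), hence $P(G[X]) \leq \expi(m,s,q)$. Taking the product over all $X$, and noting that each edge $xy$ is contained in exactly $\binom{n-2}{m-2}$ such $X$, one obtains
\[
\expi(n,s,q)^{\binom{n-2}{m-2}} \;=\; P(G)^{\binom{n-2}{m-2}} \;=\; \prod_{X \in \binom{[n]}{m}} P(G[X]) \;\leq\; \expi(m,s,q)^{\binom{n}{m}}.
\]
The double-counting identity $\binom{n}{2}\binom{n-2}{m-2} = \binom{m}{2}\binom{n}{m}$ (both sides count pairs $(\{x,y\},X)$ with $\{x,y\} \subseteq X \in \binom{[n]}{m}$) then rearranges this cleanly to $f(n) \leq f(m)$.

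For the third claim, when $q \geq \binom{s}{2}$ the constant multigraph $w \equiv 1$ on $[n]$ lies in $F(n,s,q)$ (every $s$-subset sums to exactly $\binom{s}{2} \leq q$) and has product $1$, forcing $\expi(n,s,q) \geq 1$ and hence $f(n) \geq 1$ for every $n$, whence $\expi(s,q) \geq 1$. I do not anticipate any serious obstacle here; the one mild subtlety is the degenerate case where $\expi(n,s,q) = 0$, in which logarithms are unavailable, but the displayed product identity already shows that if $\expi(n,s,q) = 0$ for some $n \geq s$, then $\expi(m,s,q) = 0$ for every $m \geq n$, so the sequence is eventually zero and monotonicity is trivially preserved.
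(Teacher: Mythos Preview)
Your proof is correct and follows essentially the same approach as the paper. The paper's proof is the special case $m = n-1$ of your averaging argument: it writes $P(G) = \bigl(\prod_{i\in[n]} P(G[[n]\setminus\{i\}])\bigr)^{1/(n-2)}$, which is exactly your product identity with $m = n-1$ (each edge lies in $n-2$ of the $(n-1)$-subsets), and then bounds each factor by $b_{n-1}^{\binom{n-1}{2}}$ to conclude $b_n \le b_{n-1}$. Your version averages over all $m$-subsets at once and uses the identity $\binom{n}{2}\binom{n-2}{m-2} = \binom{m}{2}\binom{n}{m}$ to normalize, yielding $f(n) \le f(m)$ directly for every $s \le m \le n$; this is the standard Katona--Nemetz--Simonovits formulation and is a harmless generalization rather than a different idea. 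The treatment of the $q \ge \binom{s}{2}$ case via the all-ones multigraph is identical to the paper's, and your remark about the degenerate $\expi(n,s,q)=0$ case is a nice extra care that the paper leaves implicit.
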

\begin{proof}
Fix $s\geq 2$ and $q\geq 0$.  Clearly, for all $n\geq s$, $b_n:=(\expi(n,s,q))^{\frac{1}{{n\choose 2}}}\geq 0$.  We now show the $b_n$ are non-increasing.  For $n>s$ and $G\in F(n,s,q)$, note
\begin{align*}
P(G) = \Big(\prod_{i\in [n]}P(G[[n]\setminus \{i\}])\Big)^{1/(n-2)}\leq \Big(\prod_{i\in [n]}b_{n-1}^{n-1\choose 2}\Big)^{1/(n-2)}= b_{n-1}^{n{n-1\choose 2}/n-2}= b_{n-1}^{n\choose 2}.
\end{align*}
Therefore, for all $G\in F(n,s,q)$, $P(G)^{1/{n\choose 2}}\leq b_{n-1}$, so $b_n\leq b_{n-1}$ and $\lim_{n\rightarrow \infty}b_n=\expi(s,q)$ exists.  The inequality $\expi(n,s,q)\geq \expi(s,q)^{n\choose 2}$ follows because the $b_n$ are non-increasing.  If $q\geq {s\choose 2}$, then for all $n\geq s$, the multigraph $G=([n],w)$, where $w(xy)=1$ for all $xy\in {[n]\choose 2}$, is in $F(n,s,q)$.  This shows $b_n\geq 1$ for all $n\geq s$, so by definition, $\expi(s,q)\geq 1$.  
\end{proof}

We now state a version of the hypergraph containers theorem.  Specifically, Theorem \ref{containers} below is a simplified version of Corollary 3.6 from \cite{saxton-thomason}. We first require some notation.  Given $r\geq 2$, an \emph{$r$-uniform hypergraph} is a pair $H=(W,E)$ where $W$ is a set of vertices and $E\subseteq {W\choose r}$ is a set of edges.  Given $C\subseteq W$, $H[C]$ is the hypergraph $(C,E\cap{C\choose r})$.  The \emph{average degree} of $H$ is $d=r|E|/|W|$, and $e(H)=|E|$ is the number of edges in $H$.  Given a set $X$, $2^X$ denotes the power set of $X$.

\begin{definition}Suppose $H=(W,E)$ is an $r$-uniform hypergraph with average degree $d$ and fix $\tau >0$.  For every $\sigma \subseteq W$, $x\in W$, and $j\in [r]$, set 
$$
d(\sigma) = |\{e\in E: \sigma \subseteq e\}|\qquad \hbox{ and }\qquad d^{(j)}(x)=\max \{ d(\sigma): x\in \sigma \subseteq W, |\sigma|=j\}.
$$
If $d>0$, then for each $j\in [r]$, define $\Delta_j=\Delta_j(\tau)$ to satisfy the equation
$$
\Delta_j \tau^{j-1}nd=\sum_{x\in W}d^{(j)}(x)\qquad \hbox{ and set }\qquad \Delta(H,\tau)=2^{{r\choose 2}-1}\sum_{j=2}^{r} 2^{-{j-1\choose 2}}\Delta_j.
$$
If $d=0$, set $\Delta(H,\tau)=0$.  The function $\Delta(H,\tau)$ is called the \emph{co-degree function}.
\end{definition}

\begin{theorem}[Corollary 3.6 from \cite{saxton-thomason}]\label{containers}
Fix $0<\epsilon, \tau<\frac{1}{2}$.  Suppose $H$ is an $r$-uniform hypergraph with vertex set $W$ of size $N$ satisfying $\Delta(H,\tau)\leq \frac{\epsilon}{12r!}$.  Then there exists a positive constant $c=c(r)$ and a collection $\mathcal{C}\subseteq 2^{W}$ such that the following holds. 
\begin{enumerate}[(i)]
\item For every independent set $I$ in $H$, there is some $C\in \mathcal{C}$, such that $I\subseteq C$.
\item For all $C\in \mathcal{C}$, we have $e(H[C])\leq \epsilon e(H)$.
\item $\log |\mathcal{C}| \leq c\log(1/\epsilon) N\tau \log (1/\tau)$.
\end{enumerate}
\end{theorem}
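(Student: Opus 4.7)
The statement is cited as Corollary 3.6 of Saxton--Thomason and the proof is not included in this paper; the plan below is therefore my sketch of what a proof would look like, following the scythe algorithm from \cite{saxton-thomason}. The overall strategy is: for each independent set $I \subseteq W$, deterministically construct a ``fingerprint'' $S = S(I) \subseteq I$ of bounded size, together with a map $f$ depending only on $H$ and an arbitrary linear order on $W$, such that $C := f(S) \supseteq I$ and $e(H[C]) \leq \epsilon\, e(H)$. Then setting $\calC := \{f(S) : S \subseteq W,\ |S| \leq s_0\}$ where $s_0$ is the maximum fingerprint size produces (i) and (ii) by construction, and (iii) reduces to the estimate $\log \binom{N}{\leq s_0} \leq s_0 \log(eN/s_0)$.

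To define $f$, fix a linear order on $W$ and initialize $A = C = W$ and $S = \emptyset$. At each step choose the vertex $v \in A$ of maximum residual degree in $H[A]$; if this degree falls below a threshold proportional to $\tau d$, halt. Otherwise consult $I$: if $v \in I$, append $v$ to $S$ and delete from $A$ every vertex linked to $v$ by too many edges; if $v \notin I$, remove $v$ from both $A$ and $C$. The critical point is that the test ``$v \in I$'' can be replaced by ``$v \in S$'' when later replaying the algorithm from $S$ alone, so $C$ is a function of $S$ and $f$ is well-defined.

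The main obstacle is the codegree accounting needed to prove $e(H[C]) \leq \epsilon\, e(H)$. I would attribute each surviving edge in $H[C]$ to the least $j \in \{2,\dots, r\}$ such that some $j$-subset of the edge has residual codegree in $A$ exceeding $\tau^{r-j}$ times the average, since edges with no such $j$ are killed by the halting criterion. The contribution from each $j$ is bounded by $\Delta_j \tau^{j-1} N d$, and summing with the weights $2^{\binom{r}{2}-1} 2^{-\binom{j-1}{2}}$ reproduces exactly the expression $\Delta(H,\tau)$, so the hypothesis $\Delta(H,\tau) \leq \epsilon/(12r!)$ forces the total to be at most $\epsilon\, e(H)$. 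Finally, a double-count of edge--selection incidences across the algorithm's active steps shows $|S| \leq c(r) N \tau$, from which $\log \binom{N}{\leq c(r) N \tau} = O(N\tau \log(1/\tau))$ gives the main content of (iii); the additional $\log(1/\epsilon)$ factor arises by iterating the basic construction to sharpen the constant edge-reduction ratio down to $\epsilon$.
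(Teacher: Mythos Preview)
You correctly observe that the paper does not prove this statement: it is quoted verbatim as Corollary~3.6 of Saxton--Thomason and used as a black box. There is therefore nothing in the paper to compare your sketch against; your outline of the scythe/fingerprint mechanism, the codegree bookkeeping that produces the $\Delta_j$ terms, and the iteration that introduces the $\log(1/\epsilon)$ factor is a fair high-level summary of how the original Saxton--Thomason argument proceeds, and is appropriate here as a pointer to the literature rather than as a self-contained proof.
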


Our next goal is to prove a version of Theorem \ref{containers} for multigraphs.  Suppose $G=(V,w)$ is a multigraph.  For all $xy\in {V\choose 2}$, we will refer to $w(xy)$ as the \emph{multiplicity} of $xy$.  The \emph{multiplicity of $G$} is $\mu(G)=\max\{w(xy): xy\in {V\choose 2}\}$. Given another multigraph, $G'=(V',w')$, we say that $G$ is a  \emph{submultigraph} of $G'$ if $V=V'$ and for each $xy\in {V\choose 2}$, $w(xy)\leq w'(xy)$. 

\begin{definition}
Suppose $s\geq 2$ and $q\geq 0$ are integers.  Set 
$$
\calH(s,q)=\{G=([s],w):\mu(G)\leq q\text{ and }S(G)>q\},\quad \text{ and }\qquad g(s,q)=|\calH(s,q)|.
$$
If $G=(V,w)$ is a multigraph, let $\calH(G,s,q)=\{X\in {V\choose s}: G[X]\cong G'\text{ some }G'\in \calH(s,q)\}$. 
\end{definition}

Observe $G=(V,w)$ is an $(s,q)$-graph if and only if $\calH(G,s,q)=\emptyset$.   Suppose $n$ is an integer.  We now give a procedure for defining a hypergraph $H(n)=(W,E)$.  Set $[0,q]=\{0,1,\ldots, q\}$. The vertex set of $H(n)$ is $W=\{(f,u): f\in {[n]\choose 2}, u\in [0,q]\}$.  The idea is that each pair $(f,u)$ corresponds to the choice ``the multiplicity of $f$ is $u$.''  The edge set $E$ of $H(n)$ consists of all sets of the form $\{(f,w(f)): f\in {A\choose 2}\}$, where $A\subseteq[n]$ and $(A,w)\cong G$ for some $G\in \calH(s,q)$.  For any $\sigma \subseteq W$, define $V(\sigma)$ be the set of all $i\in [n]$ appearing in an element of $\sigma$, i.e.,
$$
V(\sigma)=\Big\{i\in [n]: \text{ there is some }(f,u)\in \sigma \text{ with }i\in f\Big\}.
$$
Observe that for all $e\in E$, $|e|={s\choose 2}$, $|V(e)|=s$, and for all $f\in {V(e)\choose 2}$, there is exactly one $u\in [0,q]$ such that $(f,u)\in e$.  By definition, $H(n)$ is an ${s\choose 2}$-uniform hypergraph, $|W|=(q+1){n\choose 2}$, and $|E|=g(s,q){n\choose s}$.  We now prove our multigraph version of Theorem \ref{containers}.  Many of the computations in the proof are modeled on those used to prove Corollary 3 in \cite{Stegeretal}

\begin{theorem}\label{containerscor}
For every $0<\delta<1$ and integers $s\geq 2$, $q\geq 0$, there is a constant $c=c(s,q,\delta)>0$ such that the following holds.  For all sufficiently large $n$, there is $\mathcal{G}$ a collection of multigraphs of multiplicity at most $q$ and with vertex set $[n]$ such that
\begin{enumerate}[(i)]
\item for every $J\in F(n,s,q)$, there is $G\in \mathcal{G}$ such that $J$ is a submultigraph of $G$,
\item for every $G\in \mathcal{G}$, $|\calH(G,s,q)|\leq \delta {n\choose s}$, and 
\item $\log |\mathcal{G}| \leq cn^{2-\frac{1}{4s}}\log n$.
\end{enumerate}
\end{theorem}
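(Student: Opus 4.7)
The plan is to apply Theorem \ref{containers} to the auxiliary $\binom{s}{2}$-uniform hypergraph $H(n)=(W,E)$ defined just before the statement, taking $\epsilon=\delta/g(s,q)$ and $\tau=n^{-1/(4s)}$. Independent sets of $H(n)$ correspond to collections of (pair, multiplicity) labels that avoid every bad configuration in $\calH(s,q)$, and each $J=([n],w_J)\in F(n,s,q)$ yields the canonical independent set $I_J=\{(f,w_J(f)):f\in\binom{[n]}{2}\}$, which contains exactly one label per pair. Given the container collection $\mathcal{C}$ produced by Theorem \ref{containers}, I would decode each $C\in\mathcal{C}$ containing some $I_J$ into a multigraph $G_C=([n],w_C)$ via $w_C(f)=\max\{u:(f,u)\in C\}$, and take $\mathcal{G}=\{G_C\}$; by construction every such $G_C$ has multiplicity at most $q$.

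The technical heart is verifying $\Delta(H(n),\tau)\le\epsilon/(12r!)$ with $r=\binom{s}{2}$. For $\sigma\subseteq W$ with $|\sigma|=j$ and $k:=|V(\sigma)|$, any edge $e$ of $H(n)$ containing $\sigma$ is determined by choosing the $s-k$ remaining vertices of $V(e)$ and extending the weight function on the $r-j$ pairs of $\binom{V(e)}{2}$ not already labelled by $\sigma$, so $d(\sigma)\le(q+1)^{r-j}\binom{n-k}{s-k}$; note that $\sigma$ containing two labels $(f,u)$ and $(f,u')$ with $u\ne u'$ gives $d(\sigma)=0$ since no edge assigns two multiplicities to the same pair. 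When $j\ge 2$, the $j$ distinct pairs of $\sigma$ must span $k\ge\lceil(1+\sqrt{1+8j})/2\rceil\ge 3$ vertices, and plugging this into the definition of $\Delta_j$ using $|W|=(q+1)\binom{n}{2}$ and $|E|=g(s,q)\binom{n}{s}$ yields $\Delta_j=O(n^{2-k(j)+(j-1)/(4s)})$. A short arithmetic check using $j\le\binom{k(j)}{2}$ and $k(j)\le s$ shows $2-k(j)+(j-1)/(4s)<0$ for every $j\in\{2,\ldots,r\}$, so each $\Delta_j\to 0$ as $n\to\infty$ and the co-degree hypothesis of Theorem \ref{containers} holds for all sufficiently large $n$.

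Properties (i)--(iii) follow quickly. For (i), $I_J$ is independent in $H(n)$, so $I_J\subseteq C$ for some $C$, and $w_J(f)\le w_C(f)$ pairwise exhibits $J$ as a submultigraph of $G_C$. For (ii), each $X\in\calH(G_C,s,q)$ produces a distinct edge $e_X=\{(f,w_C(f)):f\in\binom{X}{2}\}$ of $H[C]$---the coordinates lie in $C$ because $w_C(f)$ is the maximum over a non-empty set---giving $|\calH(G_C,s,q)|\le e(H[C])\le\epsilon e(H)=\delta\binom{n}{s}$. For (iii), Theorem \ref{containers} provides $\log|\mathcal{C}|\le c(r)\log(1/\epsilon)N\tau\log(1/\tau)$ with $N=O(n^2)$ and $\tau\log(1/\tau)=O(n^{-1/(4s)}\log n)$, whence $\log|\mathcal{G}|\le cn^{2-1/(4s)}\log n$. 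The main obstacle is the uniform co-degree bookkeeping across all $j\in\{2,\ldots,\binom{s}{2}\}$: $\tau$ must be chosen large enough that $\Delta_j=o(1)$ for every such $j$ yet small enough that the container bound stays within $n^{2-1/(4s)}\log n$, and the lower bound on the vertex cover $k(j)$ of $j$ distinct pairs is precisely what makes the sweet spot $\tau=n^{-1/(4s)}$ work uniformly.
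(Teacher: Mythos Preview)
Your proposal is correct and follows essentially the same route as the paper: apply Theorem~\ref{containers} to $H(n)$ with $\epsilon=\delta/g(s,q)$ and $\tau=n^{-1/(4s)}$, verify the co-degree hypothesis via the bound $|V(\sigma)|\ge k(j)$ (the paper uses the real-valued relaxation $|V(\sigma)|>\tfrac12+\sqrt{2j}$, which amounts to the same thing), then decode each container $C$ into $G_C$ by taking the pairwise maximum multiplicity. Your bound $d(\sigma)\le (q+1)^{r-j}\binom{n-k}{s-k}$ is a harmless variant of the paper's $g(s,q)\binom{n-k}{s-k}$, and your remark that $w_C(f)$ is a maximum over a non-empty set (so that $(f,w_C(f))\in C$) is in fact a detail the paper's write-up glosses over.
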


\begin{proof}
Clearly it suffices to show Theorem \ref{containerscor} holds for all $0<\delta<1/2$. Fix $0<\delta<1/2$ and integers $s\geq 2$, $q\geq 0$.  Let $c_1=c_1({s\choose 2})$ be from Theorem \ref{containers}, and set $c=c(s,q,\delta)=\frac{c_1}{4s}\log(\frac{g(s,q)}{\delta})(q+1)$.  Assume $n$ is sufficiently large. We show there is a collection $\calG$ of multigraphs with multiplicity at most $q$ and vertex set $[n]$ such that (i)-(iii) hold for this $c$ and $\delta$.  Let $H:=H(n)$ be the ${s\choose 2}$-uniform hypergraph described above.  In particular, $H=(W,E)$ where  
\begin{align*}
W&=\{(f,u): f\in {[n]\choose 2}, u\in [0,q]\}\text{ and }\\
E&=\Big\{ \Big\{(f,w(f)): f\in {A\choose 2}\Big\}: A\subseteq [n]\text{ and }(A,w)\cong G,\text{ for some }G\in \calH(s,q)\Big\}.
\end{align*}
Set $\epsilon=\frac{\delta}{g(s,q)}$, $\tau=n^{\frac{-1}{4s}}$, and $N=|W|$.  We show the hypotheses of Theorem \ref{containers} are satisfied by $H$ with this $\epsilon$ and $\tau$.  Since $n$ is sufficiently large, $0<\tau<1/2$.  By definition of $\epsilon$, $0<\epsilon\leq \delta<1/2$.  We must now verify that $\Delta(H,\tau)\leq \frac{\epsilon}{12r!}$.  We begin with bounding the $\Delta_j$.  Fix $2\leq j\leq {s\choose 2}$ and $\sigma\subseteq W$ with $|\sigma|=j$.  We claim 
\begin{align}\label{dsigma}
d(\sigma) \leq g(s,q) n^{s-\frac{1}{2}-\sqrt{2j}}.
\end{align}
If $|V(\sigma)|>s$, then because every $e\in E$ satisfies $|V(e)|=s$, we must have $d(\sigma)=0$, so (\ref{dsigma}) holds.  Similarly, if there are $u\neq v\in [0,q]$ and $f\in {V(\sigma)\choose 2}$ such that $(f,u), (f,v)\in \sigma$, then because no $e\in E$ can contain both $(f,u)$ and $(f,v)$, we must have $d(\sigma)= 0$, so (\ref{dsigma}) holds.  Assume now $|V(\sigma)|\leq s$ and for all $f\in {V(\sigma)\choose 2}$, there is at most one $u\in [0,q]$ such that $(f,u)\in \sigma$.  Note this implies $|\sigma|\leq {|V(\sigma)|\choose 2}$. For each $(f,u)$ in $\sigma$, set $w(f)=u$.  Then $w$ is a (possibly partial) function from ${V(\sigma)\choose 2}$ into $[0,q]$.  Every edge $e$ containing $\sigma$ can be constructed as follows.
\begin{itemize}
\item Choose an $s$-element subset $X\subseteq [n]$ extending $V(\sigma)$.  There are ${n-|V(\sigma)|\choose s-|V(\sigma)|}$ ways to do this.
\item Extend $w$ to a total function ${X\choose 2}\rightarrow [0,q]$ so that $(X,w)\cong G$ for some $G\in \calH(s,q)$, and set $e=\{(f,w(f)): f\in {X\choose 2}\}$.  There are at most $g(s,q)$ ways to do this.
\end{itemize}
This shows that
\begin{align}\label{ineq1}
d(\sigma) \leq g(s,q) {n-|V(\sigma)|\choose s-|V(\sigma)|} \leq g(s,q) n^{s-|V(\sigma)|}\leq g(s,q) n^{s-\frac{1}{2}-\sqrt{2j}},
\end{align}
where the second inequality is because $j=|\sigma|\leq {|V(\sigma)|\choose 2}$ implies that $|V(\sigma)|\geq \frac{1+\sqrt{1+8j}}{2}>\frac{1}{2}+\sqrt{2j}$.  Thus we have shown (\ref{dsigma}) holds for all $\sigma\subseteq W$ with $|\sigma|=j$.  Therefore, for all $x\in W$,
\begin{align}\label{ineq00}
d^{(j)}(x)\leq g(s,q)n^{s-\frac{1}{2}-\sqrt{2j}}.
\end{align}
On the other hand, the average degree of $H$ is
\begin{align*}
d=\frac{{s\choose 2}g(s,q){n \choose s}}{ {(q+1){n\choose 2}}} = \frac{g(s,q)}{q+1}{n-2 \choose s-2}\geq \frac{g(s,q)}{q+1}\Big(\frac{n-2}{s-2}\Big)^{s-2}\geq \frac{g(s,q)}{q+1}\Big(\frac{n}{s}\Big)^{s-2} \geq n^{s-2.1},
\end{align*}
where the last two inequalities are because $n$ is sufficiently large.  Combining this with (\ref{ineq00}) yields 
\begin{align}\label{ineq3}
\nonumber\Delta_j=\frac{\sum_{x\in W} d^{(j)}(x)}{dN\tau^{j-1}}\leq \frac{Ng(s,q)n^{s-\frac{1}{2}-\sqrt{2j}}}{dN\tau^{j-1}}=\frac{g(s,q)n^{s-\frac{1}{2}-\sqrt{2j}}}{d\tau^{j-1}}&\leq g(s,q)n^{s-\frac{1}{2}-\sqrt{2j} -s+2.1 +(j-1)\frac{1}{4s}}\\
\nonumber&= g(s,q)n^{1.6 -\sqrt{2j} +(j-1)\frac{1}{4s}}\\
&\leq n^{1.61 -\sqrt{2j} +(j-1)\frac{1}{4s}},
\end{align}
where the last inequality is because $n$ is large.  Since $2\leq j\leq {s\choose 2}$, $\sqrt{2(j-1)}<s$.  Therefore,
$$
\sqrt{2j}-\frac{(j-1)}{4s}\geq \sqrt{2j}-\frac{(j-1)}{4\sqrt{2(j-1)}}= \sqrt{2j}-\frac{\sqrt{j-1}}{4\sqrt{2}}=\sqrt{2j}\Big(1-\frac{\sqrt{j-1}}{8\sqrt{j}}\Big)\geq 2\Big(1-\frac{1}{8}\Big)=1.75.
$$
Combining this with (\ref{ineq3}) we obtain that $\Delta_j\leq n^{1.61 -1.75}=n^{-.14}$. Since this bound holds for all $2\leq j\leq {s\choose 2}$, we have
\begin{align}\label{delta}
\Delta(H,\tau)=&2^{{{s\choose 2}\choose 2}-1}\sum_{j=2}^{s\choose 2} 2^{-{j-1\choose 2}}\Delta_j \leq n^{-.14}\Big(2^{{{s\choose 2}\choose 2}-1}\sum_{j=2}^{s\choose 2} 2^{-{j-1\choose 2}}\Big).
\end{align}
Since $2^{{{s\choose 2}\choose 2}-1}\sum_{j=2}^{s\choose 2} 2^{-{j-1\choose 2}}$ is a constant and $n$ is sufficiently large, (\ref{delta}) implies that we have $\Delta(H,\tau) \leq \epsilon /(12{s\choose 2}!)$, as desired.  We have now verified the hypotheses of Theorem \ref{containers} hold.  Consequently, Theorem \ref{containers} implies there exists  $\mathcal{C}\subseteq 2^W$ such that the following hold.
\begin{enumerate}
\item For every independent set $I$ in $H$, there is some $C\in \mathcal{C}$, such that $I\subseteq C$.
\item For all $C\in \mathcal{C}$, we have $e(H[C])\leq \epsilon e(H)$.
\item $\log |\mathcal{C}| \leq c_1\log(1/\epsilon) N\tau \log (1/\tau)$.
\end{enumerate}
For each $C\in \mathcal{C}$, define $G_C=([n],w^C)$ where for each $f\in {[n]\choose 2}$, $w^C(f) = \max\{ u: (f,u) \in C\}$.  Set $\mathcal{G}=\{G_C: C\in \mathcal{C}\}$.  We show this $\mathcal{G}$ satisfies (i)-(iii) of Theorem \ref{containerscor} for $c$ and $\delta$.  First note that by construction, every $G_C\in \mathcal{G}$ has multiplicity at most $q$.  We now show (i) holds.  Fix $J=([n],w)\in F(n,s,q)$ and let $I=\{(f,w(f)): f\in {[n]\choose 2}\}$.  It is straightforward to verify that because $J$ is an $(s,q)$-graph, $I\subseteq W$ is an independent set in $H$.  By 1, there is $C\in \mathcal{C}$ such that $I\subseteq C$.  By definition of $G_C$, this implies that for each $f\in {[n]\choose 2}$, $w(f)\leq w^{C}(f)$.  In other words, $J$ is a submultigraph of $G_C$.   Thus $\mathcal{G}$ satisfies (i).  

We now show part (ii).  Fix $G_C=([n],w^C)\in \calG$.  By 2, $e(H[C])\leq \epsilon e(H)= \delta {n\choose s}$, where the equality is by definition of $\epsilon$ and because $e(H)=g(s,q){n\choose s}$.  So it suffices to show that $|\calH(G_C,s,q)|\leq e(H[C])$.  Given $A\in \calH(G_C,s,q)$, define $\Theta(A)=\{(f,w^C(f)): f\in {A\choose 2}\}$.  We show $\Theta$ is an injective map from $\calH(G_C,s,q)$ into $E(H[C])$.  By definition of $w^C$, $\Theta(A)\subseteq C$, and by definition of $E$, $\Theta(A)\in E$.  Thus $\Theta(A)\in E(H[C])$.  Clearly for all $A\neq A'\in \calH(G_C,s,q)$, $\Theta(A)\neq \Theta(A')$.  So $\Theta$ is an injection from $\calH(G_C,s,q)\rightarrow E(H[C])$, and $|\calH(G_C,s,q)|\leq e(H[C])$, finishing our proof of (ii). For (iii), we must compute an upper bound for $\log |\mathcal{G}|$.  By definition of $\mathcal{G}$, $|\mathcal{G}|\leq |\mathcal{C}|$, so it suffices to bound $\log |\mathcal{C}|$.  By 3 and the definitions of $N$, $\tau$, $\epsilon$, and $c$, we have
\begin{align*}
\log |\mathcal{C}| \leq c_1\log\Big(\frac{1}{\epsilon}\Big)N\tau \log\Big(\frac{1}{\tau}\Big)=c_1\log\Big(\frac{1}{\epsilon}\Big)(q+1){n\choose 2}n^{-\frac{1}{4s}} \log(n^{\frac{1}{4s}})=c{n\choose 2}n^{-\frac{1}{4s}}\log n.
\end{align*}
Since ${n\choose 2}\leq n^2$, this is at most $cn^{2-\frac{1}{4s}}\log n$.  So $\log|\calG|\leq cn^{2-\frac{1}{4s}}\log n$, as desired.
\end{proof}

We now state a generalization of the graph removal lemma, Lemma \ref{triangleremoval} below.  Since the argument is merely an adjustment of the argument for graphs, using a multi-colored version of Szemer\'edi's regularity lemma (see \cite{AxMa}), we omit the proof.  Given two multigraphs $G=(V,w)$ and $G'=(V,w')$, set $\Delta(G,G')=\big\{xy\in \binom{V}{2}: w(xy)\neq w'(xy)\big\}$.  We say $G$ and $G'$ are \emph{$\delta$-close} if $|\Delta(G,G')|\leq \delta n^2$, otherwise they are \emph{$\delta$-far}.  

\begin{lemma}\label{triangleremoval}
Fix integers $s\geq 2$ and $q\geq 0$.  For all $0<\nu<1$, there is $0<\delta<1$ such that for all sufficiently large $n$, the following holds.  If $G=([n],w)$ satisfies $\mu(G)\leq q$ and $|\calH(G,s,q)|\leq \delta{n\choose 2}$, then $G$ is $\nu$-close to some $G'$ in $F(n,s,q)$.
\end{lemma}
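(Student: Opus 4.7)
The plan is to adapt the standard proof of the graph removal lemma to the multi-colored setting, using the multicolored regularity lemma cited in the paper as \cite{AxMa}. View the multigraph $G=([n],w)$ as the complete graph on $[n]$ whose edges are colored by $w(xy)\in\{0,1,\ldots,q\}$, and view $\calH(s,q)$ as a finite family of forbidden colored patterns on $s$ vertices. Applied to $G$, the multicolored regularity lemma produces an equitable partition $V_1,\ldots,V_k$ of $[n]$, with $k$ bounded in terms of $\epsilon$ and $q$ only, such that at most $\epsilon k^2$ pairs $\{i,j\}$ are $\epsilon$-irregular for some of the $q+1$ color classes of $G$.

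Next, construct a cleaned multigraph $G'=([n],w')$ as follows: set $w'(xy)=0$ whenever either (a) $x,y$ lie inside the same part $V_i$, (b) the pair of parts containing $x$ and $y$ is $\epsilon$-irregular in some color, or (c) the density of color $w(xy)$ between those two parts is less than $\eta$; otherwise set $w'(xy)=w(xy)$. A routine count shows that the number of pairs where $w'$ differs from $w$ is at most $\bigl(\tfrac{1}{k}+\epsilon+(q+1)\eta\bigr)n^2$, which drops below $\nu n^2$ once $\epsilon$, $\eta$, and $1/k$ are chosen small enough in terms of $\nu$ and $q$. Hence $G$ and $G'$ are $\nu$-close, and it remains to verify that $G'\in F(n,s,q)$.

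Suppose instead that some $X=\{x_1,\ldots,x_s\}$ satisfies $G'[X]\cong H$ for some $H\in\calH(s,q)$. By construction of $G'$ the $x_j$ lie in distinct parts $V_{i_1},\ldots,V_{i_s}$, and every edge $x_jx_{j'}$ sits in a pair of parts that is $\epsilon$-regular and has density at least $\eta$ in the color $w(x_jx_{j'})$. Here is the key step: a multicolored embedding lemma, proved by greedily extending partial embeddings of $H$ into $V_{i_1}\times\cdots\times V_{i_s}$ while tracking the vertices removed to preserve regularity in each of the $q+1$ color classes, yields at least $c(\eta,\epsilon,s,q)\prod_j|V_{i_j}|\geq c'(\nu,s,q)\,n^s$ many $s$-sets in $[n]$ that induce the same colored pattern as $H$ on $X$. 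Consequently $|\calH(G,s,q)|\geq c''n^s$ for some $c''=c''(\nu,s,q)>0$, so choosing $\delta<c''$ (and using $\binom{n}{2}\leq n^s$ for $n$ large) produces the required contradiction. This counting step is the principal technical obstacle and is exactly the point the authors refer to by ``an adjustment of the argument for graphs,'' since once the $q+1$ color classes are simultaneously regularized one can embed each color greedily, losing only polynomial factors in $\epsilon$ and $\eta$ at each of the $s$ steps.
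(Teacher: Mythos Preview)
The paper omits this proof entirely, asserting only that it is a routine adaptation of the classical removal lemma via a multicolored regularity lemma; your sketch is precisely the argument the authors had in mind, so there is no alternative approach to compare against. The outline is correct, but one step needs repair.

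Your assertion that ``the $x_j$ lie in distinct parts'' and that ``every edge $x_jx_{j'}$ sits in a pair of parts that is $\epsilon$-regular and has density at least $\eta$ in the color $w(x_jx_{j'})$'' is only guaranteed for pairs with $w'(x_jx_{j'})>0$. If $w'(x_jx_{j'})=0$ (triggered by any of your conditions (a)--(c)), neither conclusion follows: two of the $x_j$ may share a part, and you have no lower bound on the density of colour $0$ between the relevant parts. Hence you cannot in general produce $c'' n^s$ sets $Y$ with $G[Y]\cong H$ exactly.

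The fix is that exact copies of $H$ are not needed: any $s$-set $Y$ with $S(G[Y])>q$ already lies in $\calH(G,s,q)$. Since $w'\le w$ pointwise, it suffices to produce many $Y$ matching $G'[X]$ on its \emph{nonzero} edges only, for then $S(G[Y])\ge \sum_{jj':\,w'(x_jx_{j'})>0}w'(x_jx_{j'})=S(G'[X])>q$. For every nonzero edge of $G'[X]$ the endpoints are automatically in different parts, the pair is regular, and the colour density is at least $\eta$; if several $x_j$ happen to fall in the same part, split that part into equal pieces (regularity is inherited) before applying the counting lemma to the graph of nonzero edges. This yields the required $\Omega(n^s)$ members of $\calH(G,s,q)$, and your contradiction goes through.
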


Given $G=(V,w)$, let $G^+=(V,w^+)$ where $w^+(x,y)=w(x,y)+1$ for each $xy\in {V\choose 2}$.  Observe that for any finite multigraph $G$, the number of submultigraphs of $G$ is $P(G^+)$, and if $G\in F(n,s,q)$, then $G^+\in F(n,s,q+{s\choose 2})$.  The following supersaturation type result is a consequence of Lemma \ref{triangleremoval} and Proposition \ref{density}.

\begin{lemma}\label{countinglemma}
Suppose $s\geq 2$, $q\geq 0$. For all $\epsilon>0$ there is a $\delta>0$ such that for all sufficiently large $n$, the following holds.  Suppose $G=([n],w)$ has $\mu(G)\leq q$ and satisfies $|\calH(G,s,q)|\leq \delta{n\choose s}$.  Then $P(G^+)\leq \expi(s,q+{s\choose 2})^{(1+\epsilon){n\choose 2}}$ if $\expi(s,q+{s\choose 2})>1$ and $P(G^+)\leq 2^{\epsilon {n\choose 2}}$ if $\expi(s,q+{s\choose 2})=1$.
\end{lemma}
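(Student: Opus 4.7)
The plan is to apply the multigraph removal lemma (Lemma \ref{triangleremoval}) to replace $G$ by a close-by genuine $(n,s,q)$-graph $G'$, then transfer a product bound from $(G')^+\in F(n,s,q+{s\choose 2})$ back to $G^+$ at a small multiplicative cost, and finally invoke Proposition \ref{density} to convert the finite-$n$ quantity $\expi(n,s,q+{s\choose 2})$ into a bound involving the asymptotic density $E := \expi(s,q+{s\choose 2})$.

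Given $\epsilon>0$, I first pick an auxiliary $\nu>0$ small in terms of $\epsilon$, $q$, and $E$, then apply Lemma \ref{triangleremoval} with this $\nu$ and the pair $(s,q)$ to obtain the required $\delta>0$. For $n$ large, any $G=([n],w)$ satisfying the hypotheses is then $\nu$-close to some $G'=([n],w')\in F(n,s,q)$, i.e., $|\Delta(G,G')|\leq \nu n^2$. Both $G^+$ and $(G')^+$ have every edge multiplicity in $\{1,\ldots,q+1\}$, and they agree outside of $\Delta(G,G')$, so on each differing edge the ratio of multiplicities is at most $q+1$. Therefore
\[
P(G^+) \leq (q+1)^{\nu n^2}\, P((G')^+) \leq (q+1)^{\nu n^2}\, \expi\Big(n, s, q+{s\choose 2}\Big),
\]
where the second inequality uses $(G')^+\in F(n,s,q+{s\choose 2})$.

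By Proposition \ref{density}, the sequence $b_n := \expi(n,s,q+{s\choose 2})^{1/{n\choose 2}}$ is non-increasing and converges to $E\geq 1$ (with $E\geq 1$ because $q+{s\choose 2}\geq {s\choose 2}$, which also explains why the two cases in the statement exhaust all possibilities). Hence for any $\eta>0$ and $n$ sufficiently large, $\expi(n,s,q+{s\choose 2}) \leq (E+\eta)^{{n\choose 2}}$. If $E>1$, I choose $\nu$ and $\eta$ so small that $(q+1)^{2\nu}(E+\eta)\leq E^{1+\epsilon}$, which is possible because $E^\epsilon>1$; combined with $\nu n^2\leq 2\nu{n\choose 2}$ this yields $P(G^+)\leq E^{(1+\epsilon){n\choose 2}}$. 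If $E=1$, I instead require $(q+1)^{2\nu}(1+\eta)\leq 2^{\epsilon}$, yielding $P(G^+)\leq 2^{\epsilon{n\choose 2}}$. The only subtlety is quantifier order: $\nu$ must be fixed before $\delta$ is extracted from Lemma \ref{triangleremoval}, and $n$ must subsequently be taken large enough to make the convergence $b_n\to E$ sharp. I expect this routine bookkeeping, rather than any structural difficulty, to be the main (minor) obstacle.
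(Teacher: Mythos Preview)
Your proof is correct and follows essentially the same route as the paper's: apply the removal lemma to pass to a nearby $G'\in F(n,s,q)$, bound $P(G^+)/P((G')^+)$ edge-by-edge by $(q+1)^{|\Delta(G,G')|}$, and then use Proposition~\ref{density} to convert $\expi(n,s,q+{s\choose 2})$ into a bound in terms of the limit $E$. One tiny slip: the inequality $\nu n^2\leq 2\nu{n\choose 2}$ is false (since $n^2=2{n\choose 2}+n$); replace the constant $2$ by $3$ (valid for $n\geq 3$, exactly as the paper does) and adjust the choice of $\nu$ accordingly.
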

\begin{proof}
By Proposition \ref{density}, $\expi(s,q+{s\choose 2})$ exists and is at least $1$.  Fix $\epsilon>0$ and set 
\[
B=\begin{cases} \expi(s,q+{s\choose 2})&\text{ if }\expi(s,q)>1\\
2&\text{ if }\expi(s,q)=1.
\end{cases}\]

Set $\nu=\epsilon/(6\log_B(q+1))$.  Apply Lemma \ref{triangleremoval} to $\nu$ to obtain $\delta$.  Assume $n$ is sufficiently large, and $G=([n],w)$ has $\mu(G)\leq q$ and satisfies $|\calH(G,s,q)|\leq \delta{n\choose s}$.  Then Lemma \ref{triangleremoval} implies there is $H=([n],w')\in F(n,s,q)$ such that $G$ and $H$ are $\nu$-close.  Observe $\Delta(G,H)=\Delta(G^+,H^+)$.  Then
\begin{align}\label{ineq7w}
\nonumber P(G^+)=\prod_{xy\in {[n]\choose 2}}(w(xy)+1)&=\Big(\prod_{xy\in {[n]\choose 2}\setminus \Delta(G,H)}(w'(xy)+1)\Big)\Big(\prod_{xy\in \Delta(G,H)}(w(xy)+1)\Big)\\
&=P(H^+)\Big(\prod_{xy\in \Delta(G,H)}\frac{w(xy)+1}{w'(xy)+1}\Big).
\end{align}
Since $\max\{\mu(G),\mu(H)\}\leq q$, we have that for all $xy\in {[n]\choose 2}$, $1\leq w(xy)+1,w'(xy)+1\leq q+1$, and therefore, $\frac{w(xy)+1}{w'(xy)+1}\leq \frac{q+1}{1}=q$.  By assumption, $|\Delta(G,H)|\leq \nu n^2\leq 3\nu{n\choose 2}$.  Observe that because $H^+\in F(n,s,q+{s\choose 2})$, we have $P(H^+)\leq \expi(n,s,q+{s\choose 2})$.  Combining these facts with (\ref{ineq7w}) yields the following. 
\begin{align}\label{ineq4}
P(G^+)\leq P(H^+)(q+1)^{|\Delta(G,H)|}\leq P(H^+)(q+1)^{3\nu {n\choose 2}}\leq \expi\Big(n,s,q+{s\choose 2}\Big)(q+1)^{3\nu {n\choose 2}}.
\end{align}
Because $n$ is sufficiently large an by definition of $\expi(s,q+{s\choose 2})$, we may assume that $\expi(n,s,q+{s\choose 2})$ is at most $\expi(s,q+{s\choose 2})^{n\choose 2}B^{\epsilon{n\choose 2}/2}$.  Combining this with (\ref{ineq4}) and the definition of $\nu$ yields that 
$$
P(G^+)\leq  \expi\Big(s,q+{s\choose 2}\Big)^{{n\choose 2}}B^{\epsilon{n\choose 2}/2}(q+1)^{3\nu {n\choose 2}}=\expi\Big(s,q+{s\choose 2}\Big)^{{n\choose 2}}B^{\epsilon{n\choose 2}}.
$$
When $\expi(s,q+{s\choose 2})>1$, this says $P(G^+)\leq \expi\Big(s,q+{s\choose 2}\Big)^{(1+\epsilon){n\choose 2}}$, and when $\expi(s,q+{s\choose 2})=1$, this says $P(G^+)\leq  2^{\epsilon{n\choose 2}}$.
\end{proof}

\noindent{\bf Proof of Theorem \ref{counting}.} Suppose $s\geq 2$ and $q\geq 0$.  Fix $\epsilon >0$.  We show that for sufficiently large $n$,  
$$
\expi\Big(s,q+{s\choose 2}\Big)^{n\choose 2}\leq |F(n,s,q)|\leq \expi\Big(s,q+{s\choose 2}\Big)^{(1+\epsilon){n\choose 2}}
$$
if $\expi(s,q+{s\choose 2})\ge 1$ and $|F(n,s,q)|\leq 2^{\epsilon {n\choose 2}}$ if $\expi\Big(s,q+{s\choose 2}\Big)=1$.  We first prove the upper bounds.  Set 
\[
B=\begin{cases} \expi(s,q+{s\choose 2})&\text{ if }\expi(s,q)>1\\
2&\text{ if }\expi(s,q)=1.
\end{cases}\]
Apply Lemma \ref{countinglemma} to $\epsilon /2$ to obtain $\delta$ and apply Theorem \ref{containerscor} to $\delta$ for $s$ and $q$ to obtain $c$.  Assume $n$ is sufficiently large.  By Theorem \ref{containerscor}, there is a collection $\mathcal{G}$ of multigraphs of multiplicity at most $q$ and with vertex set $[n]$ such that 
\begin{enumerate}[(i)]
\item for every $J\in F(n,s,q)$, there is $G\in \mathcal{G}$ such that $J$ is a full submultigraph of $G$,
\item for every $G\in \mathcal{G}$, $|\calH(G,s,q)|\leq \delta {n\choose s}$, and 
\item $\log |\mathcal{G}| \leq cn^{2-\frac{1}{4s}}\log n$.
\end{enumerate}
By Lemma \ref{countinglemma} and (ii), for every $G\in \calG$, $P(G^+)\leq \expi(s,q+{s\choose 2})^{{n\choose 2}}B^{\epsilon{n\choose 2}/2}$.  By (i), every element of $F(n,s,q)$ can be constructed as follows.
\begin{enumerate}[$\bullet$]
\item Choose $G\in \mathcal{G}$.  By (iii), there are at most $cn^{2-\frac{1}{2s}}\log n$ choices.  Since $n$ is sufficiently large, we may assume that $cn^{2-\frac{1}{4s}}\log (n) \leq B^{\epsilon{n\choose 2}/2}$.
\item Choose a submultigraph of $G$.  There are $P(G^+)\leq \expi(s,q+{s\choose 2})^{{n\choose 2}}B^{\epsilon{n\choose 2}/2}$ choices.
\end{enumerate}
Combining these bounds yields that $|F(n,s,q)|\leq \expi(s,q+{s\choose 2})^{{n\choose 2}}B^{\epsilon{n\choose 2}}$.  In other words, if $\expi(s,q+{s\choose 2})>1$, then $|F(n,s,q)|\leq \expi(s,q+{s\choose 2})^{(1+\epsilon){n\choose 2}}$ and if $\expi(s,q+{s\choose 2})=1$, then $|F(n,s,q)|\leq 2^{\epsilon {n\choose 2}}$.  We only have left to show that in the case where $\expi(s, q+{s\choose 2})>1$, $|F(n,s,q)|\geq  \expi(s, q+{s\choose 2})^{{n\choose 2}}$.  Choose any product-extremal $G_0=([n],w_0)\in F(n,s,q+{s\choose 2})$.  Observe that by assumption and Proposition \ref{density}, $P(G_0)=\expi(n,s,q+{s\choose 2})\geq \expi(s,q+{s\choose 2})^{n\choose 2}>1$.  Therefore, $G_0$ contains no edges of multiplicity $0$, so we can define a multigraph $G=([n],w)$ satisfying $w(xy)=w_0(xy)-1$ for all $xy\in {[n]\choose 2}$.  By construction, $G^+=G_0$, so $G_0\in F(n,s,q+{s\choose 2})$ implies $G\in F(n,s,q)$.   Then $|F(n,s,q)|$ is at least the number of submultigraphs of $G$, which is $P(G^+)=P(G_0)=\expi(n,s,q+{s\choose 2})\geq \expi(s,q+{s\choose 2})^{n\choose 2}$. 
\qed

\section{Extremal result for $(n,4,15)$-graphs: a two-step reduction}\label{extcaseivsection}


In this section we reduce Theorem \ref{caseiv} to two stepping-stone theorems, Theorems \ref{EXTGAREIND} and \ref{EXTGINDAREINW}, below.   The main idea is that Theorem \ref{caseiv} relies on understanding the structure of $(4,15)$-graphs which are product-extremal subject to certain constraints. Given a set $\calF$ of multigraphs, recall that
$$
\calP(\calF)=\{ G\in \calF: P(G)\geq P(G')\text{ for all }G'\in \calF\} \qquad \hbox{ and}\qquad \calP(n,4,15)=\calP(F(n,4,15)).
$$

\begin{definition}
Given $n\in \mathbb{N}$, define $F_{\leq 3}(n,4,15)=\{G\in F(n,4,15): \mu(G)\leq 3\}$ and 
$$
D(n)=F_{\leq 3}(n,4,15)\cap F(n,3,8).
$$
\end{definition}

\begin{theorem}\label{EXTGAREIND}
For all sufficiently large $n$, $\calP(n,4,15)=\calP(D(n))$.
\end{theorem}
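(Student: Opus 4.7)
Since $D(n)\subseteq F(n,4,15)$, the inclusion $\calP(n,4,15)\subseteq D(n)$ would force $\max_{D(n)} P = \max_{F(n,4,15)} P$ and hence the desired equality $\calP(n,4,15)=\calP(D(n))$. So my goal reduces to showing that every product-extremal $G\in\calP(n,4,15)$ both has $\mu(G)\leq 3$ and is a $(3,8)$-graph. I would dispatch the two conditions separately using local exchange arguments of the same flavor. Note first that any $G\in\calP(n,4,15)$ must have $w(e)\geq 1$ on every edge, since the all-ones weighting lies in $F(n,4,15)$ with $P=1>0$.

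\textbf{Part 1: $\mu(G)\leq 3$.} Suppose towards a contradiction that $w(uv)=k\geq 4$. I would look for an edge $xy$ with $\{x,y\}\cap\{u,v\}=\emptyset$ and $w(xy)\leq k-2$, and perform the swap $w(uv)\mapsto k-1$, $w(xy)\mapsto w(xy)+1$, leaving everything else fixed. The product changes by the factor $\frac{(k-1)(w(xy)+1)}{k\,w(xy)}$, which strictly exceeds $1$ when $w(xy)\leq k-2$. The $(4,15)$-constraint is automatically preserved on every 4-set containing both $uv$ and $xy$ (net change $0$), on every 4-set containing $uv$ but at most one of $x,y$ (sum strictly drops), and on every 4-set disjoint from both affected edges. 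The only condition to check is that every 4-set $T\supseteq\{x,y\}$ with $\{u,v\}\not\subseteq T$ has original weight at most $14$. An edge with $w\leq 2$ must exist because otherwise every 4-set would have weight sum at least $18>15$. The technical content is then to choose such an $xy$ that is additionally ``unblocked'' by a tight 4-set; I would average $(4,15)$-slack over the 4-sets through a prescribed light edge and exploit the fact that each 4-set through $uv$ already surrenders $k\geq 4$ of its budget to $uv$ itself, so there is plenty of slack to be found elsewhere once $n$ is large.

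\textbf{Part 2: $(3,8)$-constraint.} Suppose some triple $\{x,y,z\}$ has $w(xy)+w(xz)+w(yz)\geq 9$. Then every outside vertex $t$ satisfies $w(xt)+w(yt)+w(zt)\leq 15-9=6$, which forces an abundance of light edges in the star around $\{x,y,z\}$. I would decrement one triangle edge by $1$, which multiplies the triangle's contribution to $P$ by at worst $2/3$ (coming from lowering a weight-$3$ edge), and increment a distant light edge by $1$, which multiplies $P$ by $\geq 3/2$ if coming from weight $2$ and by $2$ if from weight $1$. The case analysis on 4-sets through the modified edges is the parallel of Part~1 but strictly easier, since slack in the star of $\{x,y,z\}$ is enforced by the heavy-triple hypothesis itself.

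\textbf{Main obstacle.} In both parts the crux is verifying that a legitimate compensating edge exists in \emph{every} extremal configuration, rather than merely on average. This is where I expect the multigraph Zykov symmetrization advertised in the abstract to enter: by first symmetrizing $G$ into a canonical form with few twin classes, one normalizes the local structure so that slack in $(4,15)$ necessarily appears where the swap needs it. If a single swap is still blocked, I would fall back on compound swaps (e.g.\ decrementing $w(uv)$ by $2$ while incrementing two disjoint weight-$1$ edges by $1$), where the multiplicative gain is larger and the blocking condition correspondingly more restrictive. The ``progressive induction'' terminology in the abstract seems consistent with precisely such an iterative refinement, carried out until no bad edge or bad triple remains.
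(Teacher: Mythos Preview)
Your proposal has a real gap, namely the ``main obstacle'' you yourself flag: you never establish that an unblocked compensating edge exists. The averaging heuristic does not close this; a single heavy edge $uv$ only guarantees slack in $4$-sets \emph{through} $uv$, whereas what you need is slack in $4$-sets through the \emph{light} edge $xy$ that avoid $\{u,v\}$, and nothing in the $(4,15)$ hypothesis forces that. Your fallback of first Zykov-symmetrizing is also circular here: the replacement operations $G\mapsto G_{xy}$ used in the paper's symmetrization (Lemma~\ref{replacement}) are only shown to preserve membership in $D(n)$, not in $F(n,4,15)$ --- indeed the proof that $G_{xy}\in F(n,4,15)$ uses both $\mu(G)\leq 3$ and $G\in F(n,3,8)$ as inputs. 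So you cannot symmetrize an extremal $G$ into canonical form before you know it lies in $D(n)$, which is exactly what Theorem~\ref{EXTGAREIND} is meant to establish.

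The paper's proof is not a local swap at all. It takes a maximal disjoint family of heavy triples $D_1,\dots,D_k$ (sum $\geq 9$) and, outside those, heavy edges $e_1,\dots,e_m$ (weight $\geq 4$), sets $X=\bigcup D_i\cup\bigcup e_j$ of size $\ell$, and bounds $P(G)$ globally: the $(4,15)$ constraint forces $S_z(D_i)\leq 6$ and $S_z(e_j)\leq 4$ for outside vertices $z$, whence $P_z^G(D_i)\leq 2^3$ and $P_z^G(e_j)\leq 2^2$ by an AM--GM type step, giving $P(G)\leq P(G[[n]\setminus X])\cdot k(n,\ell)$ with $k(n,\ell)=15^{\ell}2^{\binom{\ell}{2}+\ell(n-\ell)-\ell}$. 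One then invokes the structural results already proved inside $D(\cdot)$ (Lemmas~\ref{EXTGINBCAPAAREINC}, \ref{extNCareinW}, \ref{extGinCareinNC}) to bound $P(G[[n]\setminus X])$ by $P(A)$ for $A\in\calP(W(n-\ell))$, and compares with the explicit competitor $\calG_X\in W(n)$ via a purely computational inequality $k(n,\ell)<f(n,\ell)$ (Lemma~\ref{complem2}). In short, the argument is global and quantitative, it \emph{uses} the $D(n)$-structure theory rather than precedes it, and the ``sufficiently large $n$'' enters through the calculus inequality, not through any exchange step.
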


\begin{theorem}\label{EXTGINDAREINW}
For all sufficiently large $n$, $\calP(D(n))\cap \calP(W(n))\neq \emptyset$.
\end{theorem}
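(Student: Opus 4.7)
The plan is to show that any $G \in \calP(D(n))$ can be transformed, without decreasing $P(G)$, into a multigraph in $W(n)$. Since $W(n) \subseteq D(n)$, this immediately yields $\calP(D(n)) \cap \calP(W(n)) \neq \emptyset$. The main tool is a Zykov-style symmetrization adapted to multigraphs: for vertices $u, v$, let $G_{u \to v}$ be the multigraph obtained by resetting $w(vx) := w(ux)$ for every $x \neq u, v$, and $G_{v \to u}$ symmetrically. A short product computation, splitting $P(G) = w(uv) \cdot A_u \cdot A_v \cdot R$ with $A_u = \prod_{x \neq u,v} w(ux)$ and $A_v = \prod_{x \neq u,v} w(vx)$, gives
$$P(G_{u \to v}) \cdot P(G_{v \to u}) = P(G)^2,$$
so whenever both cloned multigraphs lie in $D(n)$, at least one has product $\geq P(G)$; by extremality of $G$ equality holds and we may replace $G$ by the maximizer.

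Because $G \in F(n,3,8)$ has multiplicities at most $3$, the weight-$3$ edges of $G$ form a triangle-free subgraph of $K_n$. Validity of the cloning step is delicate: it preserves the $(4,15)$-bound on quadruples not containing both $u$ and $v$, but triangles and quadruples containing $\{u,v\}$ change and may fail the bounds---for example, if $w(uv) = 3$ and $u$ has another weight-$3$ neighbor $a$, then $\{u,v,a\}$ becomes an all-$3$ triangle in $G_{u \to v}$. The heart of the argument is to order the symmetrization moves so that at each stage at least one direction of cloning remains in $D(n)$; this is likely the ``rather technical progressive induction'' alluded to in the abstract. After saturating, we obtain a $G^* \in \calP(D(n))$ whose vertex set partitions into equivalence classes $C_1, \ldots, C_k$ of vertices with identical external weight profiles: internal edges of $C_i$ share a common weight $a_i$ and cross-edges between $C_i, C_j$ share a common weight $a_{ij}$, all in $\{0,1,2,3\}$. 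From the $(3,8)$ and $(4,15)$ constraints one extracts restrictions: if $|C_i| \geq 3$ then $a_i \leq 2$; if $|C_i|, |C_j| \geq 2$ and $a_{ij} = 3$ then $a_i + a_j \leq 3$; and no three large classes can pairwise carry cross-weight $3$.

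The remaining step is to optimize
$$P(G^*) = \prod_i a_i^{\binom{|C_i|}{2}} \prod_{i<j} a_{ij}^{|C_i||C_j|}$$
over the admissible class configurations. I would argue by a merging/comparison argument that any $k \geq 3$ configuration can be collapsed to a $k = 2$ configuration without strict loss, and that the optimal weights are forced to be $(a_1, a_2, a_{12}) = (1, 2, 3)$; elementary calculus over the bipartition sizes $|L|, |R|$ then recovers $|R| \approx \beta n$. The hardest step is the validity-preserving symmetrization itself: one must control the interaction between the $(3,8)$ triangle bound and the $(4,15)$ quadruple bound so that iterated cloning never escapes $D(n)$, and this appears to require a carefully chosen potential function or ordering on pairs $(u,v)$. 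A secondary difficulty is ruling out exotic $k \geq 3$ configurations whose products could numerically approach the $W(n)$ value, which drives the technical case analysis.
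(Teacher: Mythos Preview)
Your identity $P(G_{u\to v})P(G_{v\to u})=P(G)^2$ is correct when $w(uv)$ is left unchanged, but as you yourself observe, the cloned graph then need not lie in $D(n)$: if $w(uv)=3$ and $w(uz)=3$ for some $z$, then $\{u,v,z\}$ becomes a $(3,3,3)$-triangle after cloning and the $(3,8)$-bound fails. The paper's remedy is to redefine the edge $uv$ to have multiplicity $1$ after cloning; with this convention a direct check (the paper's Lemma~\ref{replacement}) shows the cloned graph \emph{always} lies in $D(n)$. But then the product identity becomes $P(G_{xy})P(G_{yx})=P(G)^2/w(xy)^2$, so the ``one direction has product $\geq P(G)$'' step only works when $w(xy)=1$. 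Consequently the paper uses symmetrization only on weight-$1$ edges, and only to kill the triangles containing such an edge, namely $(1,1,2)$-, $(1,1,3)$- and $(1,2,3)$-triangles. This lands us in the subclass $C(n)\subseteq D(n)$, and no further symmetrization is attempted.

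Inside $C(n)$ the weight-$1$ edges already form an equivalence relation (this is exactly what the triangle conditions buy), so the quotient/partition structure you describe comes for free --- but with a crucial caveat: cross-weights between classes are in $\{2,3\}$, not determined, and the quotient graph on classes with an edge for each weight-$3$ pair can have arbitrary structure. The \emph{progressive induction} in the paper is not about ordering symmetrization moves; it is the argument that this quotient graph is acyclic for any $G\in\calP(C(n))$. The method is quantitative, not structural: if $G$ contains a copy of $C_t(3,2)$, one bounds $P(G)$ by $P(G[[n]\setminus X])\cdot h(n,t)$ for an explicit function $h$, compares $h(n,t)$ against a lower bound $f(n,t)$ for the $W(n)$ construction, and inducts on $n$ (separately for large and small $t$) using delicate inequalities between $h$ and $f$. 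What you flag as a ``secondary difficulty'' --- ruling out $k\ge 3$ configurations --- is in fact this cycle-elimination step, and it is the technical heart of the proof, occupying most of Section~\ref{sectionextGinCareinNC} and the Appendix. Once the quotient is a forest, transforming it to a star and then to $W(n)$ is comparatively short (Lemmas~\ref{starlemma} and~\ref{wlemma}). So your outline has the right opening move and the right endgame, but misidentifies where the real work lies and does not supply a mechanism for it.
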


These two theorems will be proved in Sections \ref{sectionEXTGAREIND} and \ref{sectionEXTGINDAREINW} respectively.  We use the rest of this section to prove Theorem \ref{caseiv}, given Theorems \ref{EXTGAREIND} and \ref{EXTGINDAREINW}.  Given $G=([n],w)\in W(n)$, let $L(G)$ and $R(G)$ denote the parts in the partition of $[n]$ such that $w(xy)=1$ if and only if $xy\in {L(G)\choose 2}$. Recall the definition of $\gamma$ from Theorem \ref{caseiv}.

\begin{lemma}\label{extW}
For all $G\in \calP(W(n))$, we have $P(G)=2^{\gamma n^2+O(n)}$.
\end{lemma}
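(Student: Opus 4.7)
The plan is to compute $P(G)$ explicitly as a function of the partition sizes and then carry out a standard one-variable calculus optimization. The only quantitative point I will have to be careful about is the error introduced by the fact that $|R(G)|$ must be an integer.

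First, fix $G \in W(n)$ with partition $L = L(G)$, $R = R(G)$, and set $r = |R|$, so $|L| = n - r$. Directly from Definition \ref{W},
\[
P(G) = 1^{\binom{n-r}{2}} \cdot 2^{\binom{r}{2}} \cdot 3^{(n-r)r},
\]
so
\[
\log_2 P(G) = \binom{r}{2} + (n-r)r \log_2 3 = n^2 h(r/n) - \tfrac{r}{2},
\]
where $h(t) := \tfrac{t^2}{2} + t(1-t)\log_2 3$. Thus $\log_2 P(G) = n^2 h(r/n) + O(n)$, with the $O(n)$ bound uniform in $r \in \{0,1,\ldots,n\}$.

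Next, I maximize $h$ over $t \in [0,1]$. Since $h'(t) = t + (1 - 2t)\log_2 3$, solving $h'(t) = 0$ yields
\[
t = \frac{\log_2 3}{2\log_2 3 - 1} = \frac{\log 3}{2\log 3 - \log 2} = \beta,
\]
and $h''(t) = 1 - 2\log_2 3 < 0$, so $\beta$ is the unique maximizer on $(0,1)$. Substituting gives $h(\beta) = \beta^2/2 + \beta(1-\beta)\log_2 3 = \gamma$, exactly matching the definition of $\gamma$ in Theorem \ref{caseiv}.

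Finally, since $G \in \calP(W(n))$ and $P(G)$ depends only on $r$, the integer $r = |R(G)|$ must be an integer maximizer of $r \mapsto n^2 h(r/n)$; in particular $r$ lies within $1$ of the real optimizer $\beta n$. Since $h$ is smooth with bounded first derivative on $[0,1]$, this gives $h(r/n) = \gamma + O(1/n)$, hence
\[
\log_2 P(G) = n^2 \gamma + O(n),
\]
which is the claim. The whole argument is essentially routine calculus, and the only nuisance worth flagging is the $O(n)$ accounting when passing from the continuous optimizer $\beta n$ to the actual integer partition size of an element of $\calP(W(n))$.
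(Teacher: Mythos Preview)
Your argument is correct and follows essentially the same calculus-optimization approach as the paper's own proof. One minor slip worth fixing: you assert that $r$ is an integer maximizer of $r\mapsto n^2 h(r/n)$, but what $r$ actually maximizes is $\log_2 P(G)=n^2 h(r/n)-r/2$; since the extra $-r/2$ only shifts the real optimizer from $\beta n$ by the constant $\frac{\log 2}{2(2\log 3-\log 2)}<1$ (exactly the $\tau$ the paper computes), your conclusion $|r-\beta n|=O(1)$ and hence $h(r/n)=\gamma+O(1/n)$ is unaffected.
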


\begin{proof}
Let $G=([n],w)\in W(n)$.  Set $h(y)=2^{y\choose 2}3^{y(n-y)}$ and observe that if $|L(G)|=n-y$ and $|R(G)|=y$, then $P(G)=h(y)$.  Thus it suffices to show that $\max_{y\in [n]}h(y)=2^{\gamma n^2+O(n)}$.  Basic calculus shows that $h(y)$ has a global maximum at $\tau=\beta n - (\log 2)/(2(2\log 3-\log2))$, where $\beta= \frac{\log 3}{2\log 3-\log 2}$ is as in Theorem \ref{caseiv}.  This implies $\max_{y\in \mathbb{N}}h(y) = \max\{ h(\lfloor \tau \rfloor), h(\lceil \tau \rceil)\}$.  It is straightforward to check  $\max\{ h(\lfloor \tau \rfloor), h(\lceil \tau \rceil)\}=\max\{ h(\lfloor \beta n, \rfloor ), h(\lceil \beta n \rceil) \}$.   By definition of $\gamma$ and $h$, this implies $\max_{y\in [n]}h(y)=2^{\gamma n^2+O(n)}$.
\end{proof}

\vspace{2mm}

\noindent {\bf Proof of Theorem \ref{caseiv}.}
Fix $n$ sufficiently large and $G_1\in \calP(W(n))$.  By Theorem \ref{EXTGINDAREINW}, there is some $G_2\in \calP(D(n))\cap \calP(W(n))$.  Since $G_1$ and $G_2$ are both in $\calP(W(n))$, $P(G_1)=P(G_2)$.  Our assumption and Theorem \ref{EXTGAREIND} imply $G_2\in \calP(D(n))=\calP(n,4,15)$, so $P(G_2)=\expi(n,4,15)$.  Combining these facts yields $P(G_1)=P(G_2)=\expi(n,4,15)$, so $G_1\in \calP(n,4,15)$.  This shows $\calP(W(n))\subseteq \calP(n,4,15)$.  Since $G_1\in \calP(n,4,15)\cap\calP(W(n))$, Lemma \ref{extW} implies $\expi(n,4,15)=P(G_1)=2^{\gamma n^2+O(n)}$.  By definition, $\expi(4,15)=2^{2\gamma}$.
\qed

\vspace{2mm}

\section{Proof of Theorem \ref{EXTGINDAREINW}}\label{sectionEXTGINDAREINW}

The goal of this section is to prove Theorem \ref{EXTGINDAREINW}.  It will require many reductions and lemmas.  The general strategy is to show we can find elements in $\calP(D(n))$ with increasingly nice properties, until we can show there is one in $W(n)$.  The proof methods can be viewed as a generalization of Zykov-symmetrization to multigraphs, where we successively replace and duplicate vertices if they do not have certain desirable properties. 

\subsection{Finding an element of $\calP(D(n))$ in $C(n)$}

Given $G=(V,w)$ and $i,j,k\in \mathbb{N}$, an \emph{$(i,j,k)$-triangle} in $G$ is a set $\{x,y,z\} \in {V\choose 3}$ such that $\{w(xy), w(yz), w(xz)\}=\{i,j,k\}$.  Say that $G$ \emph{omits $(i,j,k)$-triangles} if there is no $(i,j,k)$-triangle in $G$. 

\begin{definition}
Suppose $n\geq 1$.  Define $A_{i,j,k}(n)=\{G\in F(n,4,15): G\text{ omits }(i,j,k)\text{-triangles}\}$ for each $i,j,k\in \mathbb{N}$, and set
\begin{align*}
 C(n)=D(n)\cap A_{3,1,1}(n)\cap A_{2,1,1}(n)\cap A_{3,2,1}(n).
\end{align*}
\end{definition}
Observe that for all $n$, $W(n)\subseteq C(n)\subseteq D(n)$.  The goal of this subsection is to prove Lemma \ref{EXTGINBCAPAAREINC}, which says the for all $n$, there is a product-extremal element of $D(n)$ which is also in $C(n)$.  We begin with some notation.  Suppose $G=(V,w)$ and $x\neq y\in V$. Define $G_{xy}=(V,w')$ to be the multigraph such that 
\begin{itemize}
\item $G_{xy}[V\setminus \{x,y\}]=G[V\setminus \{x,y\}]$,
\item $w'(xy)=1$, and 
\item for all $u\in V\setminus \{x,y\}$, $w'(xu)=w(yu)$.
\end{itemize}
The idea is that $G_{xy}$ is obtained from $G$ by making the vertex $x$ ``look like'' the vertex $y$.  Given $xy, vu\in {V\choose 2}$, define
$$
G_{vu,xy} = (G_{uv})_{vu}.
$$
Given $G=(V,w)$ and $y\in V$, set $p(y)=\prod_{x\in V\setminus \{y\}}w(xy)$.  We will use the following two equations for any $xy\in {V\choose 2}$ and $\{u,v,z\}\in {V\choose 3}$.  
\begin{align}
P(G_{xy})&=\frac{p(y)}{p(x)w(xy)}P(G)\text{ and }\label{line1}\\
P(G_{vu,zu})&=\frac{p(u)^2w(vz)}{p(v)p(z)w(uz)^2w(uv)^2}P(G)\label{line3*}.
\end{align}

\begin{lemma}\label{replacement}
Suppose $n\geq 1$, $G\in D(n)$, and $uv$, $xy\in {[n]\choose 2}$. Then $G_{uv}$ and $G_{uv,xy}$ are both in $D(n)$.
\end{lemma}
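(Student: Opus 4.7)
The plan is to verify directly that each of the three defining conditions of $D(n)$, namely $\mu \leq 3$, the $(4,15)$-condition, and the $(3,8)$-condition, is preserved under the single replacement $G \mapsto G_{uv}$. Once this is done for a single replacement, the statement for $G_{uv,xy}$ follows by applying the same result to $G_{uv}$ (which is already in $D(n)$) with the pair $xy$.

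For $G_{uv} = ([n], w')$ the only altered edge weights are $w'(uv)=1$ and $w'(uz) = w(vz)$ for $z \notin \{u,v\}$. Hence $\mu(G_{uv}) \leq \max(1, \mu(G)) \leq 3$. For the $(3,8)$-condition I will case-split on how many of $u,v$ lie in a triple $X$. If $u \notin X$, the triple is unchanged. If $u \in X$ but $v \notin X$, say $X=\{u,a,b\}$, then the sum in $G_{uv}$ equals $w(va)+w(vb)+w(ab)$, which is the sum over the triple $\{v,a,b\}$ in $G$, hence $\leq 8$. If $\{u,v\} \subseteq X$, say $X=\{u,v,a\}$, the sum is $1+2w(va) \leq 1+2\cdot 3 = 7$.

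The crucial case is the $(4,15)$-condition on a quadruple $X \supseteq \{u,v\}$, say $X=\{u,v,a,b\}$. Here the sum in $G_{uv}$ is
\[
1 + 2w(va) + 2w(vb) + w(ab) = 1 + \bigl(w(va)+w(vb)+w(ab)\bigr) + \bigl(w(va)+w(vb)\bigr).
\]
The first parenthesized term is the weight of the triangle $\{v,a,b\}$ in $G$, which is $\leq 8$ since $G \in F(n,3,8)$, and the second is $\leq 2\mu(G) \leq 6$, giving an overall bound of $1+8+6=15$. The remaining subcases are easy: $u \notin X$ leaves the quadruple unchanged, and $u \in X$, $v \notin X$ gives exactly the weight of $\{v,a,b,c\}$ in $G$, which is $\leq 15$.

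The main (indeed only) obstacle is this last estimate: notice that if one only assumed $G \in F(n,4,15)$ with $\mu(G) \leq 3$ but not the triangle constraint $G \in F(n,3,8)$, then $2s - w(ab) \leq 16$ is the best one could get from the obvious bounds, and the argument would fail by $1$. The whole point of building the stronger condition $(3,8)$ into the definition of $D(n)$ is precisely to make the Zykov-type replacement closed, and the proof writes itself once this is understood. After the single-step verification, the statement $G_{uv,xy} \in D(n)$ is immediate by applying the single-step result twice, which is why the lemma packages both replacements into one statement.
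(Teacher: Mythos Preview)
Your proof is correct and follows essentially the same route as the paper: reduce to a single replacement, case-split on how many of $u,v$ lie in a given $3$- or $4$-set, and in the critical case $\{u,v,a,b\}$ invoke the $(3,8)$-bound on the triangle $\{v,a,b\}$ together with $\mu\le 3$ to get $1+8+6=15$. Your closing observation that the $(3,8)$-constraint is precisely what makes the replacement close (and that without it the bound would be $16$) is a nice piece of commentary but not part of the paper's argument.
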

\begin{proof}
Fix $G=([n],w)\in D(n)$ and let $G':=G_{uv}=([n],w')$.  We show $G' \in D(n)$.  Given $X\subseteq [n]$, let $S(X)=\sum_{xy\in {X\choose 2}}w(xy)$ and $S'(X)=\sum_{xy\in {X\choose 2}}w'(xy)$.  By definition of $G_{uv}$ and because $G\in D(n)$, $\mu(G')\leq 3$.  We now check that $G'\in F(n,4,15)$.  Suppose $X\in {[n]\choose 4}$.  If $u\notin X$, then $S'(X)=S(X)\leq 15$.  If $X\cap \{u,v\}=\{u\}$, then $S'(X)=S((X\setminus \{u\})\cup \{v\})\leq 15$. So assume $\{u,v\}\subseteq X$, say $X=\{u,v,z,z'\}$.  Because $G\in F(n,3,8)$ and by definition of $G_{uv}$, we have that $S'(\{v,z,z'\})=S(\{v,z,z'\})\leq 8$.  Combining this with the facts that $w'(uv)=1$ and $\mu(G')\leq 3$ yields
$$
S'(X)=S'(\{v,x,y\})+w'(uv)+w'(ux)+w'(uy)\leq 8+1+3+3=15.
$$
We now verify that $G'\in F(n,3,8)$.   Suppose $X\in {[n]\choose 3}$.  If $u\notin X$, then $S'(X)=S(X)\leq 8$.  If $X\cap \{u,v\}=\{u\}$, then $S'(X)=S((X\setminus \{u\})\cup \{v\})\leq 8$.  So assume $\{u,v\}\subseteq X$, say $X=\{u,v,z\}$.  Because $\mu(G')\leq 3$, 
$$
S'(X)\leq w'(uv)+3+3=1+3+3=7\leq 8.
$$
Consequently, $G'\in F_{\leq 3}(n,4,15)\cap F(n,3,8)=D(n)$.  Repeating the proof yields $(G')_{xy}\in D(n)$.
\end{proof}

\begin{lemma}\label{triangles1}
For all $n\geq 1$, if $G\in \calP(D(n))$, then $G$ contains no $(3,1,1)$-triangle or $(2,1,1)$-triangle.
\end{lemma}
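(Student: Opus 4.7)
The plan is to assume for contradiction that $G = ([n], w) \in \calP(D(n))$ contains a triangle $\{u, v, z\}$ with $w(uv) = i \in \{2, 3\}$ and $w(uz) = w(vz) = 1$, and then exhibit an element of $D(n)$ whose product strictly exceeds $P(G)$. The setup is: by Lemma \ref{replacement}, both single replacements $G_{xy}$ and double replacements of the form $G_{xy, x'y'}$ lie in $D(n)$, so product-extremality of $G$ forces every such replacement to have product at most $P(G)$; we aim to derive a contradiction from this.

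First I will use extremality on the two unit-multiplicity edges $uz$ and $vz$ to pin down the quantities $p(\cdot)$. Formula (\ref{line1}) gives $P(G_{uz})/P(G) = p(z)/(p(u) \cdot w(uz)) = p(z)/p(u)$ and $P(G_{zu})/P(G) = p(u)/p(z)$; requiring both of these ratios to be $\le 1$ forces $p(u) = p(z)$. Repeating the argument with the edge $vz$ in place of $uz$ yields $p(v) = p(z)$, and hence $p(u) = p(v) = p(z)$.

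Finally I will apply the double replacement $G_{uz, vz}$ (replace $u$ by $z$, then $v$ by $z$), which lies in $D(n)$ by Lemma \ref{replacement}. Formula (\ref{line3*}), with $z$ playing the role of the common replacement target (so $u,v$ are the vertices being replaced and $uv$ is the ``across'' edge whose multiplicity appears in the numerator), gives $P(G_{uz, vz})/P(G) = p(z)^2 \, w(uv) \, / \, \bigl(p(u)\, p(v) \, w(uz)^2 \, w(vz)^2\bigr) = i$, since the $p$-values cancel and $w(uz) = w(vz) = 1$. Because $i \geq 2 > 1$, we obtain $P(G_{uz, vz}) > P(G)$, contradicting $G \in \calP(D(n))$.

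I do not anticipate a real obstacle: Lemma \ref{replacement} hands us membership in $D(n)$ for both the single and the double replacement, and the identities (\ref{line1}) and (\ref{line3*}) do all the arithmetic. The conceptual heart of the argument is that the double replacement $G_{uz, vz}$ effectively trades the two multiplicity-$1$ edges $uz$ and $vz$ for an extra copy of the multiplicity-$i$ edge $uv$, producing precisely the multiplicative gain of $i$ that powers the contradiction; this is exactly why the $(2,1,1)$- and $(3,1,1)$-triangle configurations are the ones that can be symmetrized away.
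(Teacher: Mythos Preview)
Your proof is correct and follows essentially the same symmetrization strategy as the paper: use single replacements along the multiplicity-$1$ edges to control the $p$-values, then a double replacement toward the common vertex to gain the factor $w(uv)\geq 2$. The only cosmetic difference is that you use replacements in both directions along $uz$ and $vz$ to force the exact equalities $p(u)=p(v)=p(z)$, whereas the paper takes a WLOG ordering $p(z)\leq p(v)$ and checks a single direction to obtain $p(z)\leq p(v)\leq p(u)$, which already suffices for the inequality $P(G_{vu,zu})\geq w(vz)P(G)$.
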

\begin{proof}
Suppose towards a contradiction that $G=([n],w)\in \calP(D(n))$ and $\{u,v,z\}\in {[n]\choose 3}$ is a $(3,1,1)$-triangle or a $(2,1,1)$-triangle.   Assume $w(uv)=w(uz)=1$ and $w(vz)\in\{2,3\}$. Without loss of generality assume $p(v)\geq p(z)$.  Note that by Lemma \ref{replacement}, $G_{uv}$ and $G_{vu,zu}$ are in $D(n)$.  If $p(v)>p(u)$, then using (\ref{line1}) and $w(uv)=1$ we obtain
\begin{align*}
P(G_{uv})&=\frac{p(v)}{p(u)} P(G)>P(G),
\end{align*}
which implies $G\notin \calP(D(n))$.  Therefore we may assume $p(z)\leq p(v)\leq p(u)$. Using  (\ref{line3*}) and $w(vz)\geq 2$, we obtain 
$$
P(G_{vu,zu})=\frac{w(vz)p(u)^2}{p(v)p(z)}P(G)\geq w(vz)P(G)\geq 2P(G)>P(G),
$$
a contradiction. 
\end{proof}

\noindent Given $G\in F(n,4,15)$, set $\Gamma(G)=\{Y\in {[n]\choose 3}: Y$ is a $(1,2,3)$-triangle in $G\}$.

\begin{lemma}\label{123lemma}
Suppose $n\geq 1$, $G=([n],w)\in D(n)$, and $u,v,z\in [n]$ are such that $w(uv)=1$, $w(uz)=2$ and $w(vz)=3$.  Then either $|\Gamma(G_{uv})|<|\Gamma(G)|$ or $|\Gamma(G_{vu})|<|\Gamma(G)|$.
\end{lemma}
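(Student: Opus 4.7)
The plan is to partition the $(1,2,3)$-triangles of any multigraph $G'$ on $[n]$ according to their intersection with $\{u,v\}$. For such $G'$, write $T_0(G'),T_u(G'),T_v(G'),T_{uv}(G')$ for the numbers of $(1,2,3)$-triangles of $G'$ meeting $\{u,v\}$ in $\emptyset$, $\{u\}$, $\{v\}$, and $\{u,v\}$, respectively, so that $|\Gamma(G')|=T_0(G')+T_u(G')+T_v(G')+T_{uv}(G')$. The hypothesis that $\{u,v,z\}$ is a $(1,2,3)$-triangle (with $w(uv)=1,w(uz)=2,w(vz)=3$) gives $T_{uv}(G)\geq 1$.

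Next I would read off the four quantities for $G_{uv}$ directly from the definition; the formulas for $G_{vu}$ then follow by swapping $u$ and $v$. Since the operation $G\mapsto G_{uv}$ alters only edges at $u$ and keeps $w(uv)=1$ fixed, every triangle disjoint from $u$ retains its weights, giving $T_0(G_{uv})=T_0(G)$ and $T_v(G_{uv})=T_v(G)$. For $x,y\notin\{u,v\}$, the triangle $\{u,x,y\}$ has $G_{uv}$-weights $(w(vx),w(vy),w(xy))$, which are precisely the $G$-weights of $\{v,x,y\}$, whence $T_u(G_{uv})=T_v(G)$. Finally, any triangle $\{u,v,x\}$ in $G_{uv}$ has weight multiset $\{1,w(vx),w(vx)\}$, whose underlying set has at most two elements and so cannot equal $\{1,2,3\}$; hence $T_{uv}(G_{uv})=0$. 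Summing, $|\Gamma(G_{uv})|=T_0(G)+2T_v(G)$, and symmetrically $|\Gamma(G_{vu})|=T_0(G)+2T_u(G)$.

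The conclusion is then a one-line dichotomy. Subtracting, we get $|\Gamma(G_{uv})|-|\Gamma(G)|=T_v(G)-T_u(G)-T_{uv}(G)$ and $|\Gamma(G_{vu})|-|\Gamma(G)|=T_u(G)-T_v(G)-T_{uv}(G)$. Since $T_{uv}(G)\geq 1$, if $T_u(G)\geq T_v(G)$ then the first difference is at most $-T_{uv}(G)\leq -1$, giving $|\Gamma(G_{uv})|<|\Gamma(G)|$; otherwise $T_v(G)>T_u(G)$ and the second difference is at most $-T_{uv}(G)\leq -1$, giving $|\Gamma(G_{vu})|<|\Gamma(G)|$.

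The only delicate bookkeeping step is verifying $T_{uv}(G_{uv})=0$; this rests on the observation that a $(1,2,3)$-triangle has three \emph{distinct} edge weights, whereas by construction $ux$ and $vx$ receive the same weight in $G_{uv}$ for every $x$. Notably no appeal to $G\in D(n)$ or to Lemma \ref{replacement} is needed for this lemma itself; those ingredients are what will let later applications chain these strict decreases inside $D(n)$.
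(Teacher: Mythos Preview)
Your proof is correct and follows essentially the same approach as the paper: partition $\Gamma$ according to how triangles meet a fixed vertex set containing $u$ and $v$, compute $|\Gamma(G_{uv})|$ and $|\Gamma(G_{vu})|$ in those terms, and compare. The only difference is that the paper partitions with respect to $\{u,v,z\}$ while you partition with respect to $\{u,v\}$; your coarser partition is slightly cleaner since the role of $z$ need not be tracked separately (indeed, in the paper's notation your $T_u=|\Gamma_u|+|\Gamma_{uz}|$, $T_v=|\Gamma_v|+|\Gamma_{vz}|$, $T_0=|\Gamma(G[[n]\setminus X])|+|\Gamma_z|$, and $T_{uv}=|\Gamma_{uv}|+1$, so the two arguments collapse to the same inequality).
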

\begin{proof}
Let $X=\{u,v,z\}$.  Given $y,y'\in X$, set
$$
\Gamma_y= \{x, x'\in [n]\setminus X: \{y,x,x'\} \in \Gamma(G)\}\quad \hbox{ and }\quad \Gamma_{yy'}= \{x\in [n]\setminus X: \{y,y',x\} \in \Gamma(G)\}.
$$
Observe that
$$
\Gamma(G)=\Gamma(G[[n]\setminus X])\cup\Gamma_u\cup \Gamma_v\cup \Gamma_z\cup \Gamma_{uv}\cup\Gamma_{vz}\cup \Gamma_{uz}\cup \{ X\},
$$
so $|\Gamma(G)|=|\Gamma(G[[n]\setminus X])|+|\Gamma_u|+|\Gamma_v|+|\Gamma_z|+|\Gamma_{uv}|+|\Gamma_{vz}|+|\Gamma_{uz}|+1$.  Let $G_{uv}=([n],w^{G_{uv}})$ and $G_{vu}=([n],w^{G_{vu}})$.  Note that for all $x\in [n]\setminus \{u,v\}$, we have $w^{G_{uv}}(vx)=w^{G_{uv}}(ux)$ and $w^{G_{vu}}(vx)=w^{G_{vu}}(ux)$, so there are no $(1,2,3)$-triangles in $G_{uv}$ or $G_{vu}$ of the form $\{u,v, x\}$.  If $x\in [n]\setminus X$ is such that $\{x,v,z\}\in \Gamma(G)$, then $\{x,v,z\},\{x,u,z\}\in \Gamma(G_{uv})$.  Similarly, if $x,y\in [n]\setminus X$ are such that $\{x,y,v\}\in \Gamma(G)$, then $\{x,y,v\},\{x,y,u\}\in \Gamma(G_{uv})$.  Combining these observations, we have that $|\Gamma(G_{uv})|=|\Gamma(G[[n]\setminus X])|+|\Gamma_z|+2|\Gamma_v|+2|\Gamma_{vz}|$. The same argument with the roles of $u$ and $v$ switched implies $|\Gamma(G_{vu})|=|\Gamma(G[[n]\setminus X])|+|\Gamma_z|+2|\Gamma_u|+2|\Gamma_{uz}|$.  Suppose first that $|\Gamma_v|+|\Gamma_{vz}|\leq |\Gamma_u|+|\Gamma_{uz}|$.  Then 
$$
|\Gamma(G_{uv})|\leq |\Gamma(G[[n]\setminus X])|+|\Gamma_z|+|\Gamma_u|+|\Gamma_v|+|\Gamma_{uz}|+|\Gamma_{vz}|\leq |\Gamma(G)|-1.
$$
If on the other hand, $|\Gamma_v|+|\Gamma_{vz}|\geq |\Gamma_u|+|\Gamma_{uz}|$, then the same argument with the roles of $u$ and $v$ switched implies  $|\Gamma(G_{vu})|\leq |\Gamma(G)|-1$.
\end{proof}

\begin{lemma}\label{triangles2}
For any $n\geq 1$ and $G\in D(n)$, there is $H\in D(n)\cap A_{1,2,3}(n)$ such that $P(H)\geq P(G)$.
\end{lemma}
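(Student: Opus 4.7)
The plan is to extract the desired $H$ from the set $\calP(D(n))$ of product-extremal elements of $D(n)$, using the minimum number of $(1,2,3)$-triangles as a secondary tie-breaker. Since $G \in D(n)$ and $D(n)$ is finite, $\calP(D(n))$ is nonempty, and every $H \in \calP(D(n))$ already satisfies $P(H) \geq P(G)$. So the whole content of the lemma becomes: some product-extremal element of $D(n)$ lies in $A_{1,2,3}(n)$.

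To establish this, I would choose $H \in \calP(D(n))$ with $|\Gamma(H)|$ minimum. Suppose for contradiction that $|\Gamma(H)| > 0$, and pick a $(1,2,3)$-triangle $\{u,v,z\}$ with $w(uv)=1$, $w(uz)=2$, $w(vz)=3$. By symmetry I may assume $p(v) \geq p(u)$. Lemma~\ref{replacement} gives $H_{uv}, H_{vu} \in D(n)$, and equation (\ref{line1}) combined with $w(uv)=1$ yields
\[
P(H_{uv}) = \frac{p(v)}{p(u)} P(H) \geq P(H).
\]
Since $H$ is product-extremal in $D(n)$, this inequality must be an equality, forcing $p(u) = p(v)$ and $P(H_{uv}) = P(H)$; the same computation shows $P(H_{vu}) = P(H)$ as well. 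Hence both $H_{uv}$ and $H_{vu}$ belong to $\calP(D(n))$. But Lemma~\ref{123lemma} guarantees that at least one of them has strictly fewer $(1,2,3)$-triangles than $H$, directly contradicting the minimality of $|\Gamma(H)|$. Therefore $|\Gamma(H)| = 0$, i.e., $H \in D(n) \cap A_{1,2,3}(n)$, and $P(H) \geq P(G)$ as required.

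The conceptual obstacle to watch for is that Lemma~\ref{123lemma} alone does not tell us \emph{which} of the two symmetrizations $H_{uv}$, $H_{vu}$ reduces the triangle count, and a naive iteration that always picks the ``correct'' one could force the product downward in one direction of $p(u)$ versus $p(v)$. The device that resolves this is to work inside the product-extremal class from the start: extremality forces $p(u) = p(v)$ whenever a $(1,2,3)$-triangle survives, which in turn makes \emph{both} symmetrizations product-preserving, so either half of the dichotomy in Lemma~\ref{123lemma} suffices for the contradiction. No further calculation is needed beyond (\ref{line1}) and the two cited lemmas.
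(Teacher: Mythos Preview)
Your proof is correct and essentially the same as the paper's: both use Lemma~\ref{replacement}, equation~(\ref{line1}), and Lemma~\ref{123lemma} in the same way. The only cosmetic difference is that the paper iterates the symmetrization step on an arbitrary $G$ until $\Gamma$ becomes empty, whereas you pass directly to a product-extremal $H$ minimizing $|\Gamma(H)|$ and argue by contradiction.
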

\begin{proof}
Suppose $G\in D(n)$ satisfies $\Gamma(G)\neq \emptyset$.  We give a procedure for defining $H(G)\in D(n)$ such that either $P(H(G))>P(G)$ or $P(H(G))=P(G)$ and $|\Gamma(H(G))|<|\Gamma(G)|$.  Choose some $\{u,v,z\}\in \Gamma(G)$, say $w(uv)=1$, $w(uz)=2$, and $w(vz)=3$. Suppose $p(v)<p(u)$.  Then Lemma \ref{replacement} implies $G_{uv}\in D(n)$, and (\ref{line1}) along with $w(uv)=1$ imply $P(G_{uv})= (p(v)/p(u))P(G)>P(G)$, so set $H(G)=G_{uv}$.  If $p(u)<p(v)$, the same argument with the roles of $u$ and $v$ switched implies $G_{vu}\in D(n)$ and $P(G_{vu})>P(G)$, so set $H(G)=G_{vu}$.  If $p(u)=p(v)$, use Lemma \ref{123lemma} to choose $H(G)=G_{uv}$ or $H(G)=G_{vu}$ such that $|\Gamma(H(G))|<|\Gamma(G)|$.  In this case, $P(G)=P(H(G))$.

Now fix $G\in D(n)$.  Define a sequence $G_1,\ldots, G_k$ as follows.  Set $G_1=G$.  Suppose $i>1$ and $G_1,\ldots, G_i$ have been defined.  If $\Gamma(G_i)=\emptyset$, set $k=i$.  If $\Gamma(G_i)\neq \emptyset$, set $G_{i+1}=H(G_i)$.  Clearly this algorithm will end after at some finite number of steps.  The resulting $G_k$ will contain no $(1,2,3)$-triangles and will satisfy $P(G_k)\geq P(G)$.
\end{proof}

We now prove the main result of this subsection. 
\begin{lemma}\label{EXTGINBCAPAAREINC}
For all $n\geq 1$, $\calP(D(n))\cap C(n)\neq \emptyset$.  Consequently, $\calP(C(n))\subseteq \calP(D(n))$.
\end{lemma}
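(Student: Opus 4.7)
The plan is to combine Lemma \ref{triangles1} and Lemma \ref{triangles2} in the correct order. Start by choosing any $G_0\in \calP(D(n))$, which is nonempty because $D(n)$ is finite and nonempty (the zero multigraph belongs to it). Apply Lemma \ref{triangles2} to $G_0$ to obtain some $H\in D(n)\cap A_{1,2,3}(n)$ with $P(H)\geq P(G_0)$. Since $G_0$ is product-extremal in $D(n)$ and $H\in D(n)$, the reverse inequality forces $P(H)=P(G_0)$, so $H\in \calP(D(n))$ as well. Noting that $A_{3,2,1}(n)=A_{1,2,3}(n)$ because the defining condition $\{w(xy),w(yz),w(xz)\}=\{i,j,k\}$ is symmetric in $i,j,k$, it remains only to verify that $H$ also avoids $(3,1,1)$- and $(2,1,1)$-triangles.

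For this, I would apply Lemma \ref{triangles1} directly to $H$: since $H$ is already known to lie in $\calP(D(n))$, that lemma immediately gives $H\in A_{3,1,1}(n)\cap A_{2,1,1}(n)$. Hence
\[
H\in \calP(D(n))\cap D(n)\cap A_{3,1,1}(n)\cap A_{2,1,1}(n)\cap A_{3,2,1}(n)=\calP(D(n))\cap C(n),
\]
proving the first assertion. The key observation is that the order matters: one should apply the $(1,2,3)$-triangle removal of Lemma \ref{triangles2} first and only then invoke Lemma \ref{triangles1} on the resulting graph, rather than trying to preserve the conditions $A_{3,1,1}$ and $A_{2,1,1}$ during the symmetrization steps of Lemma \ref{triangles2}.

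For the consequence $\calP(C(n))\subseteq \calP(D(n))$, let $M_D=\max_{G\in D(n)}P(G)$ and $M_C=\max_{G\in C(n)}P(G)$. The inclusion $C(n)\subseteq D(n)$ gives $M_C\leq M_D$, while the first part produces $H\in C(n)$ with $P(H)=M_D$, giving $M_C\geq M_D$. Therefore $M_C=M_D$, and any $G\in \calP(C(n))$ satisfies $P(G)=M_D$ with $G\in D(n)$, so $G\in \calP(D(n))$.

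The only real content here lies in Lemmas \ref{triangles1} and \ref{triangles2}, both already proved; the present lemma is essentially a bookkeeping step assembling those two results. The mildly subtle point is ensuring that removing one class of triangles by the replacement operations $G\mapsto G_{uv}$ does not inadvertently cost us the absence of the other triangle types, which is handled by re-applying Lemma \ref{triangles1} to the final graph rather than tracking these conditions through the symmetrization algorithm.
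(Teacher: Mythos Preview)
Your proof is correct and follows essentially the same approach as the paper: apply Lemma~\ref{triangles2} to an element of $\calP(D(n))$ to land in $A_{1,2,3}(n)$ while staying product-extremal, then invoke Lemma~\ref{triangles1} on the resulting graph to obtain membership in $A_{3,1,1}(n)\cap A_{2,1,1}(n)$. Your write-up is slightly more detailed (noting $A_{1,2,3}(n)=A_{3,2,1}(n)$ and spelling out the $M_C=M_D$ argument), but the argument is the same.
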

\begin{proof}
Suppose $G\in \calP(D(n))$.  Lemma \ref{triangles2} implies there is $H\in D(n)\cap A_{1,2,3}(n)$ with $P(H)\geq P(G)$.  Since $G\in \calP(D(n))$, this implies $H\in \calP(D(n))$.   Lemma \ref{triangles1} implies $H\in A_{2,1,1}(n)\cap A_{3,1,1}(n)$.  Therefore $H\in C(n)$.  This shows $\calP(D(n))\cap C(n)\neq \emptyset$.  Combining this with $C(n)\subseteq D(n)$ yields that $\calP(C(n))\subseteq \calP(D(n))$.
\end{proof}

\subsection{Acyclic multigraphs}

We say two multigraphs $G=(V,w)$ and $G'=(V',w)$ are $\emph{isomorphic}$, denoted $G\cong G'$, if there is a bijection $f: V\rightarrow V'$ such that $w(xy)=w'(f(x)f(y))$, for all $xy\in {V\choose 2}$.  We say that $G=(V,w)$ \emph{contains a copy of $G'$} if there is $X\subseteq V$ such that $G[X]\cong G'$.

\begin{definition}
Given $t\geq 3$, define $C_t(3,2)$ to be the multigraph $([t],w)$ such that
$$
w(12)=w(23)=\ldots =w((t-1)t)=w(t1)=3,
$$
and $w(ij)=2$ for all other pairs $i\neq j$.  For $n\geq 1$, set $NC(n)$ (NC=``no cycles'') to be the set of $G\in C(n)$ which do not contain a copy of $C_t(3,2)$ for any $t\geq 3$.
\end{definition}
We will show in the next subsection that for large $n$, all product-extremal elements of $C(n)$ are in $NC(n)$.  However, we must first show that we can find product-extremal elements of $NC(n)$ which are ``nice,'' and this is the goal of this subsection.  In particular we will show that for all $n\geq 1$, there is a product-extremal element of $NC(n)$ which is also in $W(n)$.

We begin with some notation and definitions.   If $G$ contains a copy of $C_t(3,2)$, we will write $C_t(3,2)\subseteq G$, and if not, we will write $C_t(3,2)\nsubseteq G$.  A \emph{vertex-weighted graph} is a triple $(V,E,f)$ where $(V,E)$ is graph and $f:V\rightarrow \mathbb{N}^{>0}$.  Given a multigraph $G=(V,w)$, let $\sim_G$ be the binary relation on $V$ defined by $x\sim_G y\Leftrightarrow w(xy)=1$.  

\begin{definition}
A multigraph $G$ is \emph{neat} if $\mu(G)\leq 3$ and $G$ contains no $(i,j,k)$-triangle for $(i,j,k)\in \{(1,1,2), (1,1,3), (1,2,3)\}$.  
\end{definition}

\noindent Observe that all multigraphs in $C(n)$ are neat.  Neat multigraphs have the property that we can ``mod out'' by $\sim_G$ in a coherent way.

\begin{proposition}\label{caseivprop}
Suppose $G=(V,w)$ is a neat multigraph.  Then $\sim_G$ forms an equivalence relation on $V$.  Moreover, if $\tilde{V}=\{V_1,\ldots, V_t\}$ is the set of equivalence classes of $V$ under $\sim_G$, then for each $i\neq j$, there is $w_{ij}\in \{2,3\}$ such that for all $(x,y)\in V_i\times V_j$, $w(xy)=w_{ij}$.  
\end{proposition}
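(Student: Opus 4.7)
The plan is to prove the two conclusions of the proposition in order: first that $\sim_G$ is an equivalence relation on $V$, and second that on any pair of distinct classes the multiplicity $w$ is constant with value in $\{2,3\}$. In both parts the real content is a short case analysis on edge multiplicities using the three forbidden triangle types $(1,1,2)$, $(1,1,3)$, $(1,2,3)$ that appear in the definition of neatness.

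For the equivalence relation, reflexivity is by convention ($x \sim_G x$) and symmetry is immediate from $w(xy)=w(yx)$. All the work is in transitivity. I would take distinct $x,y,z \in V$ with $w(xy)=w(yz)=1$; since $\mu(G)\le 3$, the multiplicity $w(xz)$ lies in $\{0,1,2,3\}$. The value $2$ would make $\{x,y,z\}$ a $(1,1,2)$-triangle and the value $3$ would make it a $(1,1,3)$-triangle, both excluded by neatness. The value $0$ does not arise in the multigraphs to which this proposition is applied (namely product-extremal elements of $C(n)$, in which no edge can have multiplicity $0$ without collapsing $P(G)$ to zero), so we are left with $w(xz)=1$, giving $x \sim_G z$.

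For the second claim, fix $i\ne j$, pick $x_0\in V_i$ and $y_0\in V_j$, and set $w_{ij}:=w(x_0y_0)$. Since $x_0 \not\sim_G y_0$ we have $w_{ij}\ne 1$, and the same neatness/$0$-weight discussion forces $w_{ij}\in\{2,3\}$. To prove $w(xy)=w_{ij}$ for every $(x,y)\in V_i\times V_j$ it suffices to show the weight is preserved when I move one endpoint at a time within its class, and then chain two such moves. If $y\in V_j$ is held fixed and $x,x'\in V_i$ are distinct, then $w(xx')=1$ and $w(xy),w(x'y)\in\{2,3\}$; if the two differed, one would be $2$ and the other $3$, making $\{x,x',y\}$ a $(1,2,3)$-triangle, contradicting neatness. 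The symmetric argument handles varying the $V_j$-endpoint, and composing the two substitutions gives $w(xy)=w(x_0y_0)=w_{ij}$ uniformly on $V_i\times V_j$.

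There is no substantive obstacle; the proof is a finite case-check against the three forbidden triangles. The only minor bookkeeping point is the possibility of a weight-$0$ edge, which the literal definition of a multigraph allows but which never occurs in the product-extremal setting where $\sim_G$ is subsequently used, so it may be discarded without loss.
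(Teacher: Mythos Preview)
The paper omits the proof (``straightforward and left to the reader''), so there is no written argument to compare against. Your approach is exactly the intended one: the forbidden $(1,1,2)$- and $(1,1,3)$-triangles force transitivity of $\sim_G$, and the forbidden $(1,2,3)$-triangle forces the cross-class weight to be constant once you move one endpoint at a time within its class and chain two such moves.

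Your caveat about multiplicity-$0$ edges is a genuine observation about the \emph{statement}, not a defect in your reasoning: as literally defined, a neat multigraph may contain a $(1,1,0)$-triangle (which breaks transitivity) or a weight-$0$ cross-class edge (so that $w_{ij}\notin\{2,3\}$). You are right that this never matters in the applications --- every use of $\tilde G$ in the paper is either to a product-extremal $G$ (where $P(G)>0$ forces all edge weights positive) or to a $G$ explicitly constructed with weights in $\{1,2,3\}$, as in Lemma~\ref{vertexwhtlem}. The cleanest repair is simply to add ``with no edge of multiplicity $0$'' to the definition of neat, after which your argument proves the proposition as stated.
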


\noindent The proof is straightforward and left to the reader. Suppose $G=(V,w)$ is a neat multigraph, $\tilde{V}=\{V_1,\ldots, V_t\}$ is the set of equivalence classes of $V$ under $\sim_G$, and for each $i\neq j$, $w_{ij}\in \{2,3\}$ is from Proposition \ref{caseivprop}.  Define the \emph{vertex-weighted graph associated to $G$ and $\sim_G$} to be $\tilde{G}=(\tilde{V},\tilde{E},f)$ where $\tilde{E}=\{V_iV_j\in {\tilde{V}\choose 2}: w_{ij}=3\}$ and $f(V_i)=|V_i|$ for all $i\in [t]$.  We will use the notation $|\cdot|^G$ to denote this vertex-weight function $f$, and we will drop the superscript when $G$ is clear from context.  If $H=(V,E)$ is a graph and $X\subseteq V$, then let $H[X]=(X, E\cap {X\choose 2})$.

\begin{lemma}\label{subgraph}
Suppose $n\geq 1$ and $G$ is a neat multigraph with vertex set $[n]$.  Then $G\in NC(n)$ if and only if $\tilde{G}$ is a forest.
\end{lemma}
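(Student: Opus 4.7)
The plan is to prove both directions by relating cycles in $\tilde{G}$ directly to copies of $C_t(3,2)$ in $G$, via Proposition \ref{caseivprop}. The key observation is that $C_t(3,2)$ has no edges of weight $1$, so its vertices necessarily sit in distinct $\sim_G$-equivalence classes, and the weight-$3$ edges of the cycle correspond exactly to edges of $\tilde{G}$.

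For the direction $G \in NC(n) \Rightarrow \tilde{G}$ is a forest, I would prove the contrapositive. Suppose $\tilde{G}$ contains a cycle, and let $V_{i_1} V_{i_2} \cdots V_{i_t} V_{i_1}$ be a \emph{shortest} cycle in $\tilde{G}$, with $t \geq 3$. Minimality guarantees that this cycle is chordless, so for any two non-consecutive indices $j, k$ (mod $t$), the pair $V_{i_j} V_{i_k}$ is not an edge of $\tilde{G}$, which by definition means $w_{i_j i_k} = 2$. Picking any $x_j \in V_{i_j}$ for each $j$, Proposition \ref{caseivprop} yields $w(x_j x_{j+1}) = w_{i_j i_{j+1}} = 3$ for consecutive pairs and $w(x_j x_k) = 2$ for non-consecutive pairs. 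Thus $G[\{x_1, \ldots, x_t\}] \cong C_t(3,2)$, contradicting $G \in NC(n)$.

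For the direction $\tilde{G}$ is a forest $\Rightarrow G \in NC(n)$, again I would argue by contrapositive. Suppose $G$ contains a copy of $C_t(3,2)$ on vertices $x_1, \ldots, x_t$ with $w(x_j x_{j+1}) = 3$ and $w(x_j x_k) = 2$ for non-consecutive indices. Since every edge among these vertices has weight $2$ or $3$ (in particular not $1$), the vertices $x_1, \ldots, x_t$ lie in pairwise distinct $\sim_G$-equivalence classes $V_{i_1}, \ldots, V_{i_t}$. By Proposition \ref{caseivprop}, $w_{i_j i_{j+1}} = w(x_j x_{j+1}) = 3$, so $V_{i_j} V_{i_{j+1}}$ is an edge of $\tilde{G}$ for each $j$ (indices mod $t$). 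Therefore $V_{i_1} V_{i_2} \cdots V_{i_t} V_{i_1}$ is a cycle in $\tilde{G}$, so $\tilde{G}$ is not a forest.

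There is essentially no obstacle here; the lemma is a clean translation between the two descriptions. The only mild subtlety is the need to pass to a shortest cycle in the first direction so that the chosen representatives $x_1, \ldots, x_t$ genuinely induce a $C_t(3,2)$ (without any non-consecutive pair accidentally having weight $3$). Everything else is immediate from Proposition \ref{caseivprop} and the fact that $C_t(3,2)$ has only weights in $\{2,3\}$.
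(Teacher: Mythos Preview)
Your argument for the direction ``$G\in NC(n)\Rightarrow \tilde G$ is a forest'' is fine (and the shortest-cycle trick is exactly what is needed to guarantee the representatives induce $C_t(3,2)$). The gap is in the other direction. You write the contrapositive as ``suppose $G$ contains a copy of $C_t(3,2)$,'' but that is \emph{not} the negation of $G\in NC(n)$. By definition $NC(n)\subseteq C(n)\subseteq F(n,4,15)$, so ``$G\notin NC(n)$'' means ``$G\notin C(n)$ \emph{or} $G$ contains some $C_t(3,2)$.'' A neat multigraph can fail to lie in $C(n)$ without containing any $C_t(3,2)$: for instance, four vertices with all six edges of weight $3$ is neat (no forbidden triangles) but has $S=18>15$, so it is not in $F(n,4,15)$; yet for $t\ge 5$ it certainly contains no $C_t(3,2)$. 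Your argument, as written, says nothing about this case.

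What is missing is exactly what the paper supplies: one must check that if $G$ is neat and $\tilde G$ is a forest then $G\in C(n)$, equivalently that $G\in F(n,4,15)\cap F(n,3,8)$. The $(3,8)$ condition is easy (the only neat triangle with sum $>8$ is $(3,3,3)=C_3(3,2)$, which would give a triangle in $\tilde G$). For the $(4,15)$ condition the paper argues by cases on the multiset of weights on a bad $4$-set $Y$: since $\mu(G)\le 3$, having $S(Y)\ge 16$ forces either four or five edges of weight $3$ among the (at most four) equivalence classes meeting $Y$, and in every such configuration the corresponding vertices of $\tilde G$ span a cycle. You need to add this case analysis (or a direct verification that any $4$-set has sum $\le 15$ when $\tilde G$ is a forest) to complete the proof.
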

\begin{proof}
Suppose $\tildeG$ is not a forest.  Then there is $X=\{V_{i_1},\ldots, V_{i_k}\}\subseteq \tildeV$ such that $\tildeG[X]$ is a cycle of length $k\geq 3$.  Choose some $y_j\in V_{i_j}$ for each $1\leq i\leq k$ and let $Y=\{y_1,\ldots, y_k\}$.  Then  by definition of $\tilde{G}$, we must have $G[Y]\cong C_k(3,2)$.  Thus $G\notin NC(n)$.

On the other hand, suppose $G\notin NC(n)$.  Then because $G$ is neat, we must have that either $G\notin F(n,4,15)$ or $C_t(3,2)\subseteq G$ for some $t\geq 3$.  Suppose $G\notin F(n,4,15)$.   Then there is some $Y\in {[n]\choose 4}$ such that $S^G(Y)>15$.  Since $\mu(G)\leq 3$, this implies that either 
\begin{enumerate}[(i)]
\item $\{w(xy): xy\in {Y\choose 2}\}=\{3,3,3,3,2,2\}$ or 
\item $\{w(xy): xy\in {Y\choose 2}\}=\{3,3,3,3,3,j\}$, some $j\in \{1,2,3\}$.
\end{enumerate}
Let $X$ be the set of equivalence classes intersecting $Y$, that is $X=\{V_i\in \tilde{V}: Y\cap V_i\neq \emptyset\}$.  In Case (i), because $Y$ spans no edges of multiplicity $1$ in $G$, the elements of $Y$ must be in pairwise distinct equivalence classes under $\sim_G$.  Thus in $\tilde{G}$, $|X|=4$ and $X$ spans exactly $4$ edges.  This implies $\tildeG[X]$ is either a $4$-cycle or contains a triangle.  In Case (ii), if $j=1$, then $|X|=3$ and $\tildeG[X]$ is a triangle.  If $j\neq 1$, then $|X|=4$ and spans at least $5$ edges.  This implies $\tildeG[X]$ contains a triangle.  Therefore, if $G\notin F(n,4,15)$, then $\tilde{G}$ is not a forest.  Suppose now $C_t(3,2)\subseteq G$, for some $t\geq 3$.  Then if $X\subseteq [n]$ is such that $G[X]\cong C_t(3,2)$, $\tilde{G}[X]$ is a cycle, so consequently $\tilde{G}$ is not a forest.
\end{proof}

\begin{definition}\label{vwdef}
Given a vertex-weighted graph $\tildeG=(\tildeV,E,|\cdot|)$, set
\begin{align*}
f_{\pi}(\tildeG)&=\prod_{UV\in E}3^{|U||V|}\prod_{UV\in {\tildeV \choose 2}\setminus E}2^{|U||V|}.
\end{align*}
Note that we have $P(G)=f_{\pi}(\tildeG)$ for all $G\in C(n)$.
\end{definition}

\noindent Two vertex-weighted graphs $G_1=(V_1,E_1,f_1)$ and $G_2=(V_2,E_2,f_2)$, are \emph{isomorphic}, denoted $G_1\cong G_2$, if there is a graph isomorphism $g:V_1\rightarrow V_2$ such that for all $v\in V_1$, $f_1(v)=f_2(g(v))$. 

\begin{lemma}\label{vertexwhtlem}  
Suppose $n\geq 1$ and $H=(\tilde{V},E,|\cdot|)$ is a vertex-weighted forest such that $\sum_{V\in \tilde{V}}|V|=n$.  Then there is a multigraph $G\in NC(n)$ such that $\tilde{G}$ is isomorphic to $H$.
\end{lemma}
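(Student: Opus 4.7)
The plan is to build $G$ directly from $H$ by a ``blow-up'' construction and then invoke Lemma \ref{subgraph} to verify that it lands in $NC(n)$.

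Write $\tildeV = \{V_1,\ldots,V_t\}$ and choose any partition of $[n]$ into blocks $B_1,\ldots,B_t$ with $|B_i| = |V_i|$ (this uses the hypothesis $\sum_{V\in\tildeV}|V|=n$). Define $G = ([n],w)$ by
\[
w(xy) = \begin{cases} 1 & \text{if } x,y \in B_i \text{ for some } i,\\ 3 & \text{if } x\in B_i,\ y\in B_j,\ V_iV_j\in E,\\ 2 & \text{if } x\in B_i,\ y\in B_j,\ i\ne j,\ V_iV_j\notin E. \end{cases}
\]

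Next I would verify that $G$ is neat. We have $\mu(G)\le 3$ by construction. For the triangle conditions, observe that any edge $xy$ of weight $1$ forces $x$ and $y$ to lie in the same block $B_i$; hence for any third vertex $z$ we have $w(xz)=w(yz)$ (both equal to $1$, or both equal to the common value $w_{ij}\in\{2,3\}$ where $z\in B_j$). This immediately rules out $(1,1,2)$-, $(1,1,3)$-, and $(1,2,3)$-triangles, so $G$ is neat.

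Then I would identify $\tildeG$ with $H$. The relation $\sim_G$ has the $B_i$ as its equivalence classes, so the quotient vertex set has $t$ classes with weights $|B_i|=|V_i|$, and by construction the class-pair $B_iB_j$ is an edge of $\tildeG$ exactly when $w_{ij}=3$, i.e.\ exactly when $V_iV_j\in E$. This exhibits an isomorphism $\tildeG\cong H$. Since $H$ is a forest, so is $\tildeG$, and Lemma \ref{subgraph} (applicable because $G$ is neat on vertex set $[n]$) yields $G\in NC(n)$, completing the proof.

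There is no real obstacle here: the construction is forced by Proposition \ref{caseivprop} once one sees that every neat multigraph with the right quotient must look like a blow-up. The only point requiring any care is checking that each forbidden triangle-type is impossible in the blow-up, which I handled above in a single observation, and then the inclusion $G\in NC(n)$ is obtained ``for free'' from Lemma \ref{subgraph} rather than by re-verifying the conditions defining $C(n)$ and the absence of $C_t(3,2)$ by hand.
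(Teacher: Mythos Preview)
Your proof is correct and follows essentially the same approach as the paper: both construct $G$ as the blow-up of $H$ along a partition of $[n]$ into blocks of the prescribed sizes, observe that $G$ is neat with $\tildeG\cong H$, and invoke Lemma~\ref{subgraph} to conclude $G\in NC(n)$. You simply spell out the neatness verification in slightly more detail than the paper does.
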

\begin{proof}
Let $\tilde{V}=\{V_1,\ldots, V_t\}$ and for each $i$, let $x_i=|V_i|$.  Since $\sum_{i=1}^tx_i=n$, it is clear there exists a partition $P_1,\ldots, P_t$ of $[n]$ such that for each $i\in [t]$, $|P_i|=x_i$.  Fix such a partition $P_1,\ldots, P_t$.  Define $G=([n],w)$ as follows.  For each $xy\in {[n]\choose 2}$, set
\[
w(xy)= \begin{cases}
1 & \text{ if }xy\in {P_i\choose 2}\text{ for some }i\in [t]\\
3 & \text{ if }xy\in E(P_i,P_j) \text{ for some $i\neq j$ such that }V_iV_j\in E\\
2 & \text{ if }xy\in E(P_i,P_j) \text{ for some $i\neq j$ such that }V_iV_j\notin E.
\end{cases}
\]
By construction, $G$ is a neat multigraph and $\tilde{G}$ is isomorphic to $H$.  Because $H\cong \tildeG$ is a forest, Lemma \ref{subgraph} implies $G\in NC(n)$.
\end{proof}

Given a vertex-weighted graph, $H=(\tildeV,E,|\cdot|)$ and $V\in \tildeV$, let $d^H(V)$ to denote the degree of $V$ in the graph $(\tildeV,E)$.  Given a graph $(\tildeV,E)$ and disjoint subsets $\tildeX,\tildeY$ of $\tildeV$, let $E(\tildeX)=E\cap {\tildeX\choose 2}$ and $E(\tildeX,\tildeY)=E\cap \{XY: X\in \tildeX, Y\in \tildeY\}$.

\begin{lemma}\label{starlemma}
Suppose $H=(\tildeV,E,|\cdot|)$ is a vertex-weighted forest such that $(\tildeV,E)$ is not a star.  Then there is a vertex-weighted graph $H'=(\tildeV,E', |\cdot|)$ such that $(\tildeV,E')$ is a star, and
\begin{align*}
f_{\pi}(H')\geq f_{\pi}(H).
\end{align*}
Moreover, if $f_{\pi}(H')=f_{\pi}(H)$, then $|V|=|W|$ where $V$ is the center of the star $(\tildeV,E')$ and $W\in \tildeV$ is some vertex distinct from $V$.
\end{lemma}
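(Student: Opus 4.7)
The plan is to reduce the inequality to an optimization over edge sets and resolve it by a swap argument on spanning trees. Observe that
$$f_\pi(H) = \Big(\prod_{UV \in {\tildeV \choose 2}} 2^{|U||V|}\Big) \cdot \prod_{UV \in E} (3/2)^{|U||V|},$$
where the first factor depends only on the vertex weights, not on $E$. Since $3/2 > 1$, proving $f_\pi(H') \geq f_\pi(H)$ is equivalent to proving $s(H') \geq s(H)$, where $s(G) := \sum_{UV \in E(G)} |U||V|$.

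I will take $H'$ to be the star centered at a vertex $V_c \in \tildeV$ of maximum weight, for which $s(H') = |V_c|(n - |V_c|)$ with $n = \sum_{V \in \tildeV} |V|$. Extend $H$ to a spanning tree $T$ of the complete graph on $\tildeV$; since all vertex weights are positive integers, $s(H) \leq s(T)$, so it suffices to establish $s(T) \leq s(H')$ for every spanning tree $T$.

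The main step is a swap argument. If $T$ is not already the star centered at $V_c$, pick any edge $V_iV_j \in E(T)$ with $i, j \neq c$. Deleting $V_iV_j$ splits $T$ into two components; let $A$ denote the one containing $V_c$, and assume $V_i \in A$, $V_j \in B$ (WLOG). Replacing $V_iV_j$ by $V_cV_j$ yields a new spanning tree $T'$ with
$$s(T') - s(T) = (|V_c| - |V_i|)\,|V_j| \geq 0,$$
and $\deg_{T'}(V_c) = \deg_T(V_c) + 1$. Iterating at most $t-1$ times terminates at the star centered at $V_c$, establishing $s(T) \leq s(H')$ and hence the main inequality.

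For the moreover part, equality $s(H) = s(H')$ forces equality throughout the chain $s(H) \leq s(T) \leq s(T_1) \leq \cdots \leq s(H')$. The first equality together with positivity of weights forces $T = H$, so $H$ is itself a spanning tree. Since $H$ is not a star, at least one swap is required, and the equality condition $(|V_c| - |V_i|)|V_j| = 0$ on that swap forces $|V_i| = |V_c|$; taking $W := V_i \neq V_c$ finishes the proof. The main technical point to watch is verifying that $V_cV_j \notin E(T)$ immediately before each swap (true because $V_j$ lies in the component $B$ not containing $V_c$), which both legitimizes the swap and guarantees that $\deg(V_c)$ strictly increases, ensuring termination of the iteration and forcing at least one swap whenever the current tree is not yet a star at $V_c$.
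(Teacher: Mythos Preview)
Your proof is correct and follows essentially the same strategy as the paper's: pick a vertex $V_c$ of maximum weight as the star center and repeatedly reroute edges toward $V_c$, each swap changing $f_\pi$ by a factor $(3/2)^{(|V_c|-|V_i|)|V_j|}\ge 1$. Your packaging is somewhat cleaner in two places. First, by rewriting $f_\pi(H)$ as a constant times $(3/2)^{s(H)}$ and extending $H$ to a spanning tree, you absorb the paper's separate ``Step~0'' (attaching isolated vertices) into a single inequality $s(H)\le s(T)$. Second, your swap uses an arbitrary edge $V_iV_j$ missing $V_c$, whose existence is immediate whenever the current tree is not the star at $V_c$; the paper instead insists that the detached vertex be a leaf in $\tildeV\setminus(\{V\}\cup N(V))$ and spends a paragraph proving such a leaf exists. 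Both choices lead to the same increment $(|V_c|-|V_i|)|V_j|$ and the same equality analysis, so the arguments are variants of one another rather than genuinely different routes.
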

\begin{proof}
Let $H=(\tildeV,E,|\cdot|)$ be a vertex-weighted forest.  Fix $V\in \tildeV$ with $|V|=\max\{|X|: X\in \tildeV\}$.  We now define a sequence $H_0, H_1,\ldots, H_k$, where for each $i$, $H_i=(\tildeV, E_i, |\cdot|)$.

Step $0$: Let $\tildeX$ be the set of isolated points in $H$.  If $\tildeX=\emptyset$ set $H_0=H$ and go to the next step.  If $\tildeX\neq \emptyset$, let $E_0=E\cup \{VX: X\in \tildeX\}$ and $H_0=(\tildeV,E_0, |\cdot|)$.  Clearly $(\tildeV,E_0)$ is still a forest, since any cycle must contain a new edge, i.e. an edge of the form $VX$, some $X\in \tildeX$.  But $d^{H_0}(X)=1$ for all $X\in \tildeX$ implies no $X\in \tildeX$ can be contained in a cycle in $H_0$.  Further, note
\begin{align*}
f_{\pi}(H_0)&=f_{\pi}(H)\Big(\frac{3}{2}\Big)^{\sum_{X\in \tildeX}|V||X|}>f_{\pi}(H).
\end{align*}
If $H_0$ is a star, end the construction and let $k=0$, otherwise go to the next step.  

Step $i+1$: Suppose by induction we have defined $H_0, \ldots, H_i$ such that $(\tildeV,E_i)$ is forest but not a star and contains no isolated points.  Since $(\tildeV,E_i)$ is not a star, it is in particular, not a star with center $V$.  This implies the set $\tildeY_{i}:=\tildeV\setminus (\{V\}\cup d^{H_i}(V)) \neq \emptyset$.  We show there is $Y\in \tildeY_i$ such that $d^{H_i}(Y)=1$.  Since there are no isolated points in $(\tildeV,E_i)$, every $Y\in \tildeY_i$ has $d^{H_i}(Y)\geq 1$.  Suppose towards a contradiction that every $Y\in \tildeY_i$ had $d^{H_i}(Y)\geq2$.  Choose a maximal sequence of points $\Ybar=(Y_1,\ldots, Y_u)$ from $\tildeY_i$ with the property that $Y_1Y_2, \ldots, Y_{u-1}Y_u\in E_i$. Since $Y_1$ and $Y_u$ have degree at least two in $(\tildeV,E_i)$ and because $(\tildeV,E_i)$ is a forest, there are $Z_1, Z_u\in \tildeV\setminus \Ybar$ such that $Y_1Z_1, Y_uZ_u\in E_i$.  Since $Y_1,Y_u\in Y_i$, $Z_1,Z_u\neq V$ and since $\Ybar$ was maximal, $Z_1,Z_u\notin \tildeY_i$.  Thus $Z_1,Z_u\in \tildeV\setminus (\tildeY_i\cup \{V\})$ which implies $VZ_1, VZ_u\in E_i$.  This yields that   $V, Z_1, Y_1, \ldots, Y_u, Z_u, V$ is a cycle in $(\tildeV,E_i)$, a contradiction.  Thus there exists $Y\in \tildeY_i$ such that $d^{H_i}(Y)=1$.  Fix such a $Y\in \tildeY_i$ and let $W$ be the unique neighbor of $Y$ in $(\tildeV,E_i)$.  Define 
$$
E_{i+1}=(E_i\setminus \{YW\})\cup \{VY\}.
$$
and let $H_{i+1}=(\tildeV,E_{i+1},|\cdot|)$.  We first check $(\tildeV,E_{i+1})$ is a forest.  Since $(\tildeV,E_i)$ is a forest, any cycle in $(\tildeV,E_{i+1})$ will contain $VY$.  However, $d^{H_{i+1}}(Y)=1$, so $Y$ cannot be contained in a cycle.  Note
$$
f_{\pi}(H_{i+1})=f_{\pi}(H_i)3^{|V||Y|-|Y||W|}2^{|Y||W|-|V||Y|}=f_{\pi}(H_i)\Big(\frac{3}{2}\Big)^{|Y|(|V|-|W|)}\geq f_{\pi}(H_i),
$$
where the inequality holds because $|V|\geq |W|$ by choice of $V$. Further, note that the inequality is strict unless $|V|=|W|$.

Clearly this process must end after some $0\leq k<|\tildeV|$ steps.  If $k=0$, then $H_0=H_k$ is a star and $f_{\pi}(H_k)>f_{\pi}(H)$.  If $k\geq 1$, then the resulting $H_k=(\tildeV,E_k,|\cdot|)$ will have the property that $(\tildeV,E_k)$ is a star with center $V$.  Since $k\geq 1$, one of the following holds.
\begin{itemize}
\item $f_{\pi}(H_1)>f_{\pi}(H_0)$, so $f_{\pi}(H_k)>f_{\pi}(H)$, or 
\item $f_{\pi}(H_0)=f_{\pi}(H_1)$ and at step $1$, we found a vertex $W\neq V$ with $|V|=|W|$.
\end{itemize}
\end{proof}

\begin{lemma}\label{wlemma}
Suppose $n\geq 1$, $G\in NC(n)$, and $\tilde{G}=(\tilde{V}, E, |\cdot|)$ is the vertex-weighted graph associated to $G$ and $\sim_G$.  Suppose $(\tilde{V},E)$ is a star with center $V$ and there is $W\in \tilde{V}\setminus \{V\}$ such that $|W|>1$.  Then $G\notin \calP(NC(n))$.
\end{lemma}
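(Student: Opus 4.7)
Let $a = |V|$, let $b = |W|\ge 2$, and enumerate the remaining non-center classes as $W_1,\dots,W_{t-2}$. The strategy is to ``split off'' a single vertex from $W$ to form a new singleton leaf of the star, and to show that this operation strictly increases the product while preserving membership in $NC(n)$. Concretely, fix $y \in W$ and define a new partition of $[n]$ as
\[
V,\ W\setminus\{y\},\ W_1,\dots,W_{t-2},\ \{y\},
\]
and let $G'=([n],w')$ be the multigraph obtained by applying the star rule to this new partition with center $V$: weight $1$ inside each class, weight $3$ between $V$ and any non-center class, weight $2$ between any two non-center classes.

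First I would verify that $G' \in NC(n)$ using Lemma \ref{subgraph}. Because $G'$ has the same ``star-like'' local weight pattern as an element of $W(n)$, a short case analysis over the weights in a triangle shows that the only triangle types occurring in $G'$ are of weight-multiset $\{1,1,1\}$, $\{1,3,3\}$, $\{1,2,2\}$, $\{2,3,3\}$, and $\{2,2,2\}$; in particular $G'$ is neat (no $(1,1,2)$, $(1,1,3)$, or $(1,2,3)$ triangle), and also avoids $(3,1,1),(2,1,1),(3,2,1)$ triangles. The associated vertex-weighted graph $\widetilde{G'}$ is again a star with center $V$ (just one extra singleton leaf), hence a forest, so Lemma \ref{subgraph} gives $G' \in NC(n)$.

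Next I would compare $P(G)$ and $P(G')$ edge by edge. The only edges whose multiplicities change are those between $y$ and the other vertices of $W$: there are $b-1$ such edges, each previously having weight $1$ (same class in $G$) and now having weight $2$ (two distinct non-center classes in $G'$). All other edges incident to $y$ retain their weight: edges from $y$ to $V$ remain weight $3$, and edges from $y$ to each $W_i$ remain weight $2$. Therefore
\[
\frac{P(G')}{P(G)} \;=\; 2^{\,b-1} \;\ge\; 2.
\]
Since $b \geq 2$, this is strictly greater than $1$, so $P(G') > P(G)$ with $G' \in NC(n)$, proving $G \notin \calP(NC(n))$.

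There is no serious obstacle here; the entire proof rests on identifying the right local move. The key observation is that because $V$ is the unique center of the star, detaching a singleton from a leaf class of size $\geq 2$ preserves the star (and hence the forest) structure of $\widetilde{G}$, while the only affected edges are the $b-1$ intraclass weight-$1$ edges at $y$, each of which gets upgraded to weight $2$.
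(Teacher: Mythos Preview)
Your proof is correct and follows essentially the same approach as the paper: both split the leaf class $W$ into a piece of size $|W|-1$ and a singleton, observe that the new associated graph is still a star (hence a forest), and compute the identical gain factor $2^{|W|-1}\ge 2$. The only cosmetic difference is that you work directly at the multigraph level (picking a specific $y\in W$ and changing weights), whereas the paper constructs the new vertex-weighted star $H$ abstractly and then invokes Lemma~\ref{vertexwhtlem} to realize it as some $G'\in NC(n)$.
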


\begin{proof}
Let $\tilde{V}'=(\tilde{V}\setminus \{W\})\cup \{W_1,W_2\}$ and $E'=(E\setminus \{VW\})\cup \{VW_1,VW_2\}$, where $W_1, W_2$ are new vertices.  Let $H=(\tilde{V}',E', |\cdot|')$ where the vertex-weight function $|\cdot|'$ is defined by $|U|'=|U|$ for all $U\in \tilde{V}\setminus \{W\}$, $|W_1|'=|W|-1$, and $|W_2|'=1$.  By definition of $H$, $\sum_{U\in \tilde{V}'}|U|'=\sum_{U\in \tilde{V}} |U| =n$.  Since $H$ is obtained from $\tildeG$ by splitting the degree one vertex $W$ into $W_1$ and $W_2$, and $\tildeG$ is a forest, $H$ is also a forest.  Thus $H$ satisfies the hypotheses of Lemma \ref{vertexwhtlem}, so there is an $G'\in NC(n)$ such that $\tildeG'$ is isomorphic to $H$.  This and Definition \ref{vwdef} implies $f_{\pi}(H)=f_{\pi}(\tildeG')=P(G')$.  Let $\tildeZ=\tildeV\setminus \{V,W\}$.  Then 
\begin{align*}
f_{\pi}(H)&=\Big(\prod_{U\in \tildeZ}3^{|U||V|}\prod_{UU'\in {\tildeZ\choose 2}}2^{|U||U'|}\Big)\Big(\prod_{U\in \tildeZ}2^{|W_1||U|+|W_2||U|}\Big)3^{|V||W_1|'+|V||W_2|'}2^{|W_1|'|W_2|'}\\
&=\Big(\prod_{U\in \tildeZ}3^{|U||V|}\prod_{UU'\in {\tildeZ\choose 2}}2^{|U||U'|}\Big)\Big(\prod_{U\in \tildeZ}2^{|W||U|}\Big)3^{|V||W|}2^{|W|-1}\\
&=f_{\pi}(\tilde{G})2^{|W|-1} \geq 2f_{\pi}(\tildeG).
\end{align*}
So $G'\in NC(n)$ and $P(G')=f_{\pi}(H)>f_{\pi}(\tilde{G})=P(G)$ imply $G\notin \calP(NC(n))$.  
\end{proof}

We now prove the main result of this subsection.

\begin{lemma}\label{extNCareinW}
For all $n\geq 1$, $\calP(NC(n))\cap W(n)\neq \emptyset$.  Consequently, $\calP(W(n))\subseteq \calP(NC(n))$.
\end{lemma}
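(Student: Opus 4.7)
The plan is to begin with an arbitrary $G \in \calP(NC(n))$, transform it into a $G' \in W(n)$ with $P(G') \geq P(G)$, and then invoke the product-extremality of $G$ to conclude $G' \in \calP(NC(n)) \cap W(n)$. The consequently clause will then follow from this together with the easy observation that $W(n) \subseteq NC(n)$ (a copy of $C_t(3,2)$ inside a graph from $W(n)$ would contain a cycle of weight-$3$ edges, which is bipartite between $L$ and $R$, forcing two consecutive $L$-side vertices of the cycle to be joined by a weight-$2$ edge, contradicting that any pair in ${L\choose 2}$ has weight $1$).

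First I would reduce to the case where $\tildeG$ has star underlying graph. By Lemma \ref{subgraph}, $\tildeG = (\tildeV, E, |\cdot|)$ is a vertex-weighted forest. If its underlying graph is not already a star, Lemma \ref{starlemma} produces $H' = (\tildeV, E', |\cdot|)$ whose underlying graph is a star and satisfies $f_\pi(H') \geq f_\pi(\tildeG)$, and Lemma \ref{vertexwhtlem} then yields $G' \in NC(n)$ with $\tildeG' \cong H'$ so that, by Definition \ref{vwdef},
$$P(G') = f_\pi(\tildeG') = f_\pi(H') \geq f_\pi(\tildeG) = P(G).$$
(If $\tildeG$ is already a star, take $G' = G$.) In either case $P(G') \geq P(G)$, and since $G \in \calP(NC(n))$ we conclude $P(G') = P(G)$, $G' \in \calP(NC(n))$, and $\tildeG'$ has star underlying graph.

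Next I would apply Lemma \ref{wlemma} to $G'$. If $|\tildeV'| = 1$, every edge of $G'$ has weight $1$ and $G' \in W(n)$ with $L = [n]$, $R = \emptyset$. Otherwise the star $\tildeG'$ has a well-defined center $V$; the contrapositive of Lemma \ref{wlemma}, combined with $G' \in \calP(NC(n))$, forces $|W|^{G'} = 1$ for every leaf $W \in \tildeV' \setminus \{V\}$. Setting $L = V$ and $R = [n] \setminus V$, Proposition \ref{caseivprop} then yields: pairs inside $L$ have weight $1$ (same $\sim_{G'}$-class); pairs inside $R$ lie between two non-adjacent leaf singletons of $\tildeG'$ and hence have weight $2$; pairs in $L \times R$ lie between the center $V$ and an adjacent leaf singleton and hence have weight $3$. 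Thus $G' \in W(n)$ by Definition \ref{W}, and so $\calP(NC(n)) \cap W(n) \neq \emptyset$.

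For the consequently clause, since $G' \in \calP(NC(n)) \cap W(n)$ and $W(n) \subseteq NC(n)$, the maxima $\max_{W(n)} P$ and $\max_{NC(n)} P$ coincide, so every $H \in \calP(W(n))$ automatically lies in $\calP(NC(n))$. The real work has been packaged into Lemmas \ref{starlemma}, \ref{vertexwhtlem}, and \ref{wlemma}; the only mild obstacle in assembling them is keeping the bookkeeping straight for degenerate shapes of $\tildeG$ (e.g.\ a single vertex, or a star with $|\tildeV'| = 2$), both of which are handled directly above.
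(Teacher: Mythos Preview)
Your proof is correct and uses the same key ingredients as the paper's (Lemmas \ref{starlemma}, \ref{vertexwhtlem}, and \ref{wlemma}), assembled in essentially the same way. Your organization is in fact slightly cleaner than the paper's: the paper splits into cases according to whether $G$ contains an edge of multiplicity $1$, and in the case where $\tildeG$ is not a star it invokes the ``moreover'' clause of Lemma \ref{starlemma} (the equality case forces $|V|=|W|$ for some leaf $W$) to manufacture a leaf of size $>1$ and derive a contradiction via Lemma \ref{wlemma}; you bypass both the case split and the ``moreover'' clause by observing directly that once $G'\in\calP(NC(n))$ has $\tildeG'$ a star, the contrapositive of Lemma \ref{wlemma} already forces every leaf to have size $1$, so $G'\in W(n)$.
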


\begin{proof}
If $n=1$, this is trivial.  If $n=2$ then the only element in $\calP(NC(n))$ is the $G$ which consists of a single edge with multiplicity $3$.  Clearly this $G$ also in $W(n)$.  Assume now $n\geq 3$ and let $G=([n],w)\in \calP(NC(n))$.  Suppose first that $G$ contains no edges of multiplicity $1$.  Then $\tilde{G}=([n],E)$ where $E=\{xy\in {[n]\choose 2}: w(xy)=3\}$.  By Lemma \ref{subgraph}, $\tilde{G}$ is a forest. It is a well known fact that because $\tilde{G}$ is a forest with $n$ vertices, $|E|\leq n-1$.  Therefore, we have that $P(G)=3^{|E|}2^{{n\choose 2}-|E|}\leq 3^{n-1}2^{{n\choose 2}-n+1}$. Let $G'=([n],w')$ be such that $w'(1i)=3$ for all $2\leq i\leq n$ and $w'(xy)=2$ for all other edges.  Then $G'\in NC(n)$ and $P(G')=3^{n-1}2^{{n\choose 2}-n+1}\geq P(G)$.  Since $G\in \calP(NC(n))$, this implies $G'\in \calP(NC(n))$ as well.  By definition, $G'\in W(n)$, so we are done.

Assume now $G$ contains some $xy$ with $w(xy)=1$.  Consider now the vertex-weighted graph $\tilde{G}=(\tilde{V},E,|\cdot|)$ associated to $G$ and $\sim_G$.  Suppose $(\tilde{V},E)$ is a star with center $V$.  If $|W|=1$ for all $W\in \tilde{V}\setminus \{V\}$, then $G\in W(n)$ and we are done.  If there is $W\in \tilde{V}\setminus \{V\}$ such that $|W|>1$, then Lemma \ref{wlemma} implies $G\notin \calP(NC(n))$, a contradiction.  

Suppose now $(\tilde{V},E)$ is not a star.  Then Lemma \ref{starlemma} implies there is a vertex-weighted graph $H=(\tilde{V}, E', |\cdot|)$ such that $(\tilde{V}, E')$ is a star and $f_{\pi}(H)\geq f_{\pi}(\tilde{G})$.  Since $(\tilde{V},E')$ is a star, it is a forest.  Since $(\tildeV, E, |\cdot|)$ is the vertex-weighted graph associated to $G$ and $\sim_{G}$, $\sum_{U\in \tildeV}|U|=n$.  Thus $H$ satisfies the hypotheses of Lemma \ref{vertexwhtlem}, so there is $G'\in NC(n)$ such that $\tilde{G'}\cong H$.  Thus $P(G')=f_{\pi}(H)\geq f_{\pi}(\tilde{G})$, where the equality holds by Definition \ref{vwdef}.  Suppose $f_{\pi}(H)>f_{\pi}(\tildeG)$.  Then
\begin{align*}
P(G')&=f_{\pi}(H)>f_{\pi}(\tilde{G})=P(G),
\end{align*}
contradicting that $G\in \calP(NC(n))$.  Thus we must have $f_{\pi}(H)= f_{\pi}(\tilde{G})$.  By Lemma \ref{starlemma}, this only happens if there is some $W\neq V\in \tilde{V}$ such that $|V|=|W|$, where $V$ is the center of the star $(\tilde{V},\tilde{E}')$.  Note that because $G$ contains some $xy$ with $w(xy)=1$, there is some vertex $U\in \tilde{V}$ such that $|U|>1$.  If $U\neq V$, then $U\in \tildeV\setminus \{V\}$ and $|U|>1$.  If $U=V$, then $W\in \tilde{V}\setminus \{V\}$ and $|W|=|V|=|U|>1$.  In either case Lemma \ref{wlemma} implies that $G'\notin \calP(NC(n))$.  Since $P(G)=f_{\pi}(\tilde{G})=f_{\pi}(H)=P(G')$, this implies $G\notin \calP(NC(n))$, a contradiction.  Thus we have shown that for all $n\geq 1$, $\calP(NC(n))\cap W(n)\neq \emptyset$.  Since $W(n)\subseteq NC(n)$, this implies $\calP(W(n))\subseteq \calP(NC(n))$.
\end{proof}

\subsection{Getting rid of cycles and proving Theorem \ref{EXTGINDAREINW}}\label{sectionextGinCareinNC}
In this subsection we prove Lemma \ref{extGinCareinNC}, which shows that for large $n$, all product-extremal elements of $C(n)$ are in $NC(n)$.  We will then prove Theorem \ref{EXTGINDAREINW} at the end of this subsection.  Our proof uses an argument that is essentially a progressive induction.

Note that if $G\in C(n)$, then $C_3(3,2)\nsubseteq G$ (since $C(n)\subseteq F(n,3,8)$) and $C_4(3,2)\nsubseteq G$ (since $S(C_4(3,2))=16$).  So to show some $G\in C(n)$ is in $NC(n)$, we only need to show $C_t(3,2)\nsubseteq G$ for $t\geq 5$.  Given $G=(V,w)$, $X\subseteq V$, and $z\in V\setminus X$, set $P_z^G(X)=\prod_{x\in X}w(xz)$.

\begin{lemma}\label{boundCtproduct}
Let $5\leq t\leq n$ and $G=([n],w)\in C(n)$.  Suppose $C_t(3,2)\subseteq G$, and for all $5\leq t'<t$, $C_{t'}(3,2)\nsubseteq G$.  If $X\in {[n]\choose t}$ is such that $G[X]\cong C_t(3,2)$, then for all $z\in [n]\setminus X$ either
\begin{enumerate}
\item $|\{x\in X : w(zx)=3\}|\leq 1$ and $P^G_z(X)\leq 3\cdot 2^{t-1}$ or
\item $|\{x\in X: w(zx)=3\}|\geq 2$ and $P^G_z(X)\leq 3^22^{t-3}<3\cdot2^{t-1}$.
\end{enumerate}
\end{lemma}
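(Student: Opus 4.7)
Part (1) is immediate. Since $\mu(G)\leq 3$, every factor in $P^G_z(X)=\prod_{x\in X}w(zx)$ is at most $3$; by hypothesis at most one factor equals $3$ and the other $t-1$ factors are at most $2$, so $P^G_z(X)\leq 3\cdot 2^{t-1}$.

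For Part (2), write $X_A=\{x\in X:w(zx)=3\}=\{x_{i_1},\ldots,x_{i_a}\}$ in cyclic order on the cycle $C_t(3,2)$ spanned by $X$. These $a$ vertices split $C_t$ into $a$ arcs of lengths $L_1,\ldots,L_a$ with $\sum_k L_k=t$; call the $L_k-1$ non-endpoint vertices of arc $k$ its \emph{middles}. The first observation is $L_k\geq 2$ for every $k$: otherwise $\{z,x_{i_k},x_{i_{k+1}}\}$ would be an all-$3$ triangle of weight-sum $9$, contradicting $G\in F(n,3,8)$. The core step is to show that whenever $L_k+2<t$, some middle of arc $k$ has $z$-weight $\leq 1$, and hence the arc's middle product is at most $2^{L_k-2}$. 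If $L_k=2$, the unique middle $x_m$ lies in the $4$-set $\{z,x_{i_k},x_m,x_{i_{k+1}}\}$ whose weights $3,w(zx_m),3,3,3,2$ must sum to at most $15$, forcing $w(zx_m)\leq 1$. If $L_k\geq 3$ and every middle $z$-weight on arc $k$ equalled $2$, then $\{z,x_{i_k},x_{i_k+1},\ldots,x_{i_{k+1}}\}$ would induce exactly $C_{L_k+2}(3,2)$: the outer cycle edges have weight $3$, the $z$-chords have weight $2$ by assumption, and each $x_jx_{j'}$ chord has weight $2$ because $x_j,x_{j'}$ are non-consecutive on $C_t$. Provided $5\leq L_k+2<t$, this contradicts the minimality of $t$.

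The one loose end is $L_k=t-2$, where $C_{L_k+2}(3,2)=C_t(3,2)$ and the ``smaller cycle'' argument gives no new contradiction. But combined with $L_j\geq 2$ for $j\neq k$, the equation $\sum L_j=t$ forces $a=2$ and the remaining arc to have length exactly $2$. In this exceptional configuration the $L=2$ arc contributes at most $1$ by the case above, while the $L=t-2$ arc contributes the naive bound $2^{L_k-1}=2^{t-3}$, so $P^G_z(X)\leq 3^2\cdot 1\cdot 2^{t-3}=9\cdot 2^{t-3}$. In every other configuration each arc contributes at most $2^{L_k-2}$, hence $P^G_z(X)\leq 3^a\cdot 2^{t-2a}=(3/4)^a\cdot 2^t\leq (9/16)\cdot 2^t=9\cdot 2^{t-4}$ for $a\geq 2$, an even stronger bound. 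Finally $9\cdot 2^{t-3}<3\cdot 2^{t-1}$ reduces to $9/8<3/2$. The main obstacle is isolating the borderline case $L_k=t-2$, since it is the unique configuration where the ``smaller cycle'' argument degenerates and the lemma's bound is tight rather than strict.
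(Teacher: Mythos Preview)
Your proof is correct and follows essentially the same approach as the paper's own proof: partition the cycle into arcs between consecutive weight-$3$ neighbours of $z$, show each arc has length at least $2$ using $G\in F(n,3,8)$, then for every arc of length $L_k\leq t-3$ locate a middle vertex with $z$-weight $\leq 1$ by exhibiting an impossible $C_{L_k+2}(3,2)$, and finally isolate the tight configuration $a=2$ with arc lengths $2$ and $t-2$ where only the short arc is guaranteed such a vertex. Your handling of the $L_k=2$ sub-case via the $(4,15)$ edge-sum bound is exactly the same as the paper's observation that the corresponding $4$-set would otherwise be $C_4(3,2)$; the arithmetic and case split match the paper's line for line.
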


\begin{proof}
Let $X=\{x_1,\ldots, x_t\}$ where $w(x_i x_{i+1})=w(x_1x_t)=3$ for each $i\in [t-1]$ and $w(x_ix_j)=2$ for all other pairs $ij\in {[t]\choose 2}$.  Since $G\in C(n)$, $C_3(3,2), C_4(3,2)\nsubseteq G$.  Combining this with our assumptions, we have that for all $3\leq t'<t$, $C_{t'}(3,2)\nsubseteq G$.  We will use throughout that $\mu(G)\leq 3$ (since $G\in C(n)$).  Fix $z\in [n]\setminus X$ and let $Z=\{x\in X: w(zx)=3\}$.  If $|Z|\leq 1$, then clearly 1 holds.  So assume $|Z|\geq 2$ and $i_1<\ldots <i_{\ell}$ are such that $Z=\{x_{i_1},\ldots, x_{i_{\ell}}\}$.  Without loss of generality, assume $i_1=1$.  Set
$$
I=\{(x_{i_j},x_{i_{j+1}}):1\leq j\leq \ell-1\}\cup \{(x_{i_1},x_{i_{\ell}})\}.
$$
Given $(x,y)\in I$, let 
\[
d(x,y)=\begin{cases} i_{j+1}-i_j  & \text{ if }(x,y)=(x_{i_j},x_{i_{j+1}}) \text{ some }1\leq j\leq \ell-1\\
t-i_{\ell}+1 & \text{ if }(x,y)=(x_{i_1},x_{i_{\ell}}).
\end{cases}
\]
Note that because $C_3(3,2)\nsubseteq G$, $2\leq d(x,y)\leq t-2$ for all $(x,y)\in I$.  Suppose first that there is some $(u,v)\in I$ such that $d(u,v)=t-2$.  Then since $d(x,y)\geq 2$ for all $(x,y)\in I$ we must have that $|I|=1$ and either $(u,v)=(x_{i_1},x_{i_{\ell}})=(x_1,x_{t-1})$ or $(u,v)=(x_{i_1},x_{i_{\ell}})=(x_1,x_3)$.  Without loss of generality, assume $(u,v)=(x_1,x_3)$.  Then we must have that $w(zx_2)\leq 1$ since otherwise $G[\{z,x_1,x_2,x_3\}]\cong C_4(3,2)$, a contradiction.  This shows that $P^G_z(X)\leq 3^2\cdot 1\cdot 2^{t-3}<3\cdot 2^{t-1}$.

Suppose now that for all $(x,y)\in I$, $d(x,y)\leq t-3$.  Given $(x,y)\in I$, say an element $x_k$ is \emph{between} $x$ and $y$ if either $(x,y)=(x_{i_j},x_{i_{j+1}})$ and $i_j<k<i_{j+1}$ or $(x,y)=(x_{i_1},x_{i_{\ell}})$ and $i_{\ell}<k$.  Then for each $(x,y)\in I$, there must be a $x_k$ between $x$ and $y$ such that $w(zx_k)\leq1$, since otherwise  
$$
\{z,x,y\}\cup \{u: u \text{ is between }x\text{ and }y\}
$$ 
is a copy of $C_{d(x,y)+2}(3,2)$ in $G$, a contradiction since $d(x,y)+2<t$.   This implies there are at least $\ell$ elements $u$ in $X\setminus Z$ such that $w(zu)\leq 1$, so $P^G_z(X)\leq 3^{\ell} 2^{t-2\ell} \leq 3^22^{t-4}<3\cdot 2^{t-1}$.
\end{proof}

\noindent Given $n,t\in \mathbb{N}$ set
$$
f(n,t)=\min\Big\{ 2^{{\lceil  \beta t\rceil \choose 2}+\lceil  \beta t\rceil c}3^{\lceil \beta t\rceil \lfloor (1-\beta)t\rfloor+c\lfloor (1-\beta)t\rfloor+\lceil  \beta t\rceil (n-t-c)}: c\in \{\lfloor \beta (n-t)\rfloor, \lceil \beta (n-t)\rceil\}\Big\}.
$$

\begin{definition}\label{GXdef}
Suppose $t\leq n$ and $X\in {[n]\choose t}$.  Define $\mathcal{G}_X=([n],w)$ to be the following multigraph, where $Y=[n]\setminus X$. Choose any $A\in \calP(W(n-t))$ and $B\in W(t)$ so that $|R(B)|=\lceil \beta t\rceil$ and $|L(B)|=\lfloor (1-\beta )t\rfloor$.  Define $w$ on ${Y\choose 2}\cup {X\choose 2}$ to make $\mathcal{G}_X[Y]\cong A$ and $\mathcal{G}_X[X]\cong B$. Define $w$ on the remaining pairs of vertices in the obvious way so that $\mathcal{G}_X\in W(n)$.  
\end{definition}

\begin{lemma}\label{GXlem} Suppose $t\leq n$ and $X\in {[n]\choose t}$. Then for any $A'\in \calP(W(n-t))$,  $P(\mathcal{G}_X)\geq P(A')f(n,t)$.
\end{lemma}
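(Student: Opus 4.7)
The plan is to unfold the definition of $\mathcal{G}_X$ and rewrite $P(\mathcal{G}_X)$ as a product that matches term-by-term with the expression defining $f(n,t)$. Since $\mathcal{G}_X \in W(n)$ with $\mathcal{G}_X[Y] \cong A$ and $\mathcal{G}_X[X] \cong B$, I will first factor
\[
P(\mathcal{G}_X) = P(A) \cdot P(B) \cdot Q,
\]
where $Q$ is the product of edge multiplicities over all pairs in $Y \times X$.

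Next I would compute $Q$ explicitly in terms of $r := |R(A)|$ (so that $|L(A)| = n - t - r$) using the defining weights of $W(n)$: pairs in $L(A) \times L(B)$ contribute $1$, pairs in $L(A) \times R(B)$ and $R(A) \times L(B)$ contribute $3$, and pairs in $R(A) \times R(B)$ contribute $2$. Combining this with the formula $P(B) = 2^{\binom{\lceil \beta t\rceil}{2}} 3^{\lceil \beta t \rceil \lfloor (1-\beta) t \rfloor}$, which follows directly from $|R(B)| = \lceil \beta t \rceil$ and $|L(B)| = \lfloor (1-\beta)t\rfloor$, a routine rearrangement shows that
\[
P(B) \cdot Q = 2^{\binom{\lceil \beta t\rceil}{2} + r\lceil \beta t\rceil} \cdot 3^{\lceil \beta t\rceil \lfloor (1-\beta) t\rfloor + r\lfloor (1-\beta)t\rfloor + \lceil \beta t\rceil(n-t-r)},
\]
which is precisely the quantity inside the minimum in the definition of $f(n,t)$ evaluated at $c = r$.

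The last step invokes Lemma \ref{extW}: since $A \in \calP(W(n-t))$, the calculus in that lemma (with $n - t$ in place of $n$) forces $r \in \{\lfloor \beta(n-t) \rfloor, \lceil \beta(n-t)\rceil\}$, which is exactly the set of $c$'s over which the minimum defining $f(n,t)$ is taken. Therefore $P(B) \cdot Q \geq f(n,t)$. Since every $A' \in \calP(W(n-t))$ satisfies $P(A') = P(A)$ by the definition of $\calP$, multiplying through yields $P(\mathcal{G}_X) \geq P(A') f(n,t)$, as claimed. I do not anticipate a substantive obstacle here: the argument is essentially a bookkeeping check, and the careful framing of $f(n,t)$ as a minimum over the two integer values of $c$ nearest $\beta(n-t)$ is precisely what lets the bound hold uniformly, without having to analyze whether $\beta(n-t)$ is closer to its floor or its ceiling.
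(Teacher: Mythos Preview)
Your proposal is correct and follows essentially the same route as the paper's own proof: factor $P(\mathcal{G}_X)$ according to the partition $L_A\cup L_B,\ R_A\cup R_B$, identify the resulting expression with the term in the definition of $f(n,t)$ at $c=|R(A)|$, use the proof of Lemma~\ref{extW} to place $|R(A)|$ in $\{\lfloor\beta(n-t)\rfloor,\lceil\beta(n-t)\rceil\}$, and finish by noting $P(A)=P(A')$. The only cosmetic difference is that you explicitly separate the factor $P(B)\cdot Q$ while the paper writes the combined product directly.
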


\begin{proof} Set $Y=[n]\setminus X$ and let $\calG_X=([n],w)$.  Let $B\in W(t)$ and $A\in \calP(W(n-t))$ be as in the definition of $\calG_X$ so that $\calG_X[X]\cong B$ and $\calG_X[Y]\cong A$.  Let $L_A, R_A$ and $L_B, R_B$ be the partitions of $Y$ and $X$ respectively such that $w(xy)=1$ for all $xy\in {L_A\choose 2}\cup {L_B\choose 2}$.  By choice of $B$, $|L_B|=\lfloor (1-\beta) t \rfloor$ and $|R_B|=\lceil \beta t\rceil$.  Let $c=|R_A|$.  By definition, $|L_A|=n-t-c$, and since $A\in \calP(W(n-t))$, $c\in \{\lfloor \beta (n-t)\rfloor, \lceil \beta (n-t)\rceil\}$ (by the proof of Lemma \ref{extW}).  Combining these observations with the definition of $f(n,t)$ implies
\begin{align*}
2^{{|R_B|\choose 2}+|R_A||R_B|}3^{|L_B||R_B|+|R_B||L_A|+|L_B||R_A|}= 2^{{\lceil  \beta t\rceil \choose 2}+\lceil  \beta t\rceil c}3^{\lceil \beta t\rceil \lfloor (1-\beta)t\rfloor+c\lfloor (1-\beta)t\rfloor+\lceil  \beta t\rceil (n-t-c)}\geq f(n,t).
\end{align*}
Combining this with the definition of $\calG_X$, we have 
$$
P(\calG_{X})=P(A)2^{{|R_B|\choose 2}+|R_A||R_B|}3^{|L_B||R_B|+|R_B||L_A|+|L_B||R_A|}\geq P(A)f(n,t).
$$
Since $P(A)=P(A')$ for all $A'\in \calP(W(n-t))$, this finishes the proof.
\end{proof}

\begin{definition}
Given $n,t\in \mathbb{N}$, let $h(n,t)=3^n2^{{t\choose 2}+t(n-t)-n}$.
\end{definition}

\begin{lemma}\label{lemmallt}
Let $5\leq t\leq n$, $G\in C(n)$, and $\nu>0$.  Suppose $X\in {[n]\choose t}$, $G[X]\cong C_t(3,2)$, and there is some $A\in \calP(W(n-t))$ such that $P(G[[n]\setminus X])\leq \nu P(A)$.  Then $P(G)\leq \nu((h(n,t))/f(n,t))P(\mathcal{G}_X)$.
\end{lemma}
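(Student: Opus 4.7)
The plan is to decompose $P(G)$ according to the partition $[n]=X\sqcup([n]\setminus X)$, bound each piece using the tools already established, and then collect the estimates into the target inequality.

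First, I would write
\[
P(G) \;=\; P(G[X])\cdot P(G[[n]\setminus X])\cdot \prod_{z\in [n]\setminus X} P^G_z(X).
\]
Since $G[X]\cong C_t(3,2)$, the induced subgraph on $X$ has exactly $t$ edges of weight $3$ (the cycle) and $\binom{t}{2}-t$ edges of weight $2$, so $P(G[X])=3^t\cdot 2^{\binom{t}{2}-t}$. This handles the first factor by an immediate calculation.

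Next I would control the cross-edges. Assuming $t$ is the smallest integer $\geq 5$ for which $C_t(3,2)\subseteq G$ (which we may arrange, since otherwise we apply the lemma to a smaller cycle), the hypotheses of Lemma~\ref{boundCtproduct} are in force. For each $z\in [n]\setminus X$, that lemma yields $P^G_z(X)\leq 3\cdot 2^{t-1}$ in case~1 and the strictly smaller bound $3^2\cdot 2^{t-3}$ in case~2, so uniformly
\[
\prod_{z\in [n]\setminus X} P^G_z(X) \;\leq\; (3\cdot 2^{t-1})^{n-t} \;=\; 3^{n-t}\cdot 2^{(t-1)(n-t)}.
\]
Multiplying by $P(G[X])$ and collecting exponents of $2$ gives $\binom{t}{2}-t+(t-1)(n-t)=\binom{t}{2}+t(n-t)-n$, and of $3$ gives $n$. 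Thus
\[
P(G) \;\leq\; P(G[[n]\setminus X])\cdot 3^n\cdot 2^{\binom{t}{2}+t(n-t)-n} \;=\; P(G[[n]\setminus X])\cdot h(n,t).
\]

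Finally I would plug in the two assumed inequalities. By hypothesis $P(G[[n]\setminus X])\leq \nu P(A)$ for some $A\in \calP(W(n-t))$, and Lemma~\ref{GXlem} gives $P(\calG_X)\geq P(A)f(n,t)$, so $P(A)\leq P(\calG_X)/f(n,t)$. Chaining these yields
\[
P(G) \;\leq\; \nu\, P(A)\, h(n,t) \;\leq\; \nu\,\frac{h(n,t)}{f(n,t)}\, P(\calG_X),
\]
which is exactly the claimed bound. The proof is really just bookkeeping once Lemmas~\ref{boundCtproduct} and~\ref{GXlem} are in hand; the only subtle point is making sure Lemma~\ref{boundCtproduct} can be invoked, which is why we want $t$ to be minimal (otherwise we just replace $X$ by a smaller cycle and rerun the same argument, with the residual factor $P(G[[n]\setminus X'])$ still bounded by $\nu P(A)$ up to absorbing the extra vertices into the other side, or simply note the lemma is applied with the minimal $t$ downstream).
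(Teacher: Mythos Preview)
Your argument is essentially the paper's own proof: the same decomposition $P(G)=P(G[X])\cdot P(G[[n]\setminus X])\cdot\prod_{z}P^G_z(X)$, the same use of Lemma~\ref{boundCtproduct} to get $P^G_z(X)\le 3\cdot 2^{t-1}$, and the same chaining through $P(G[[n]\setminus X])\le \nu P(A)$ and Lemma~\ref{GXlem}.

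You are right to flag the minimality issue. Lemma~\ref{boundCtproduct} has the extra hypothesis ``$C_{t'}(3,2)\nsubseteq G$ for all $5\le t'<t$,'' which is \emph{not} among the hypotheses of Lemma~\ref{lemmallt} as stated; the paper's own proof simply invokes Lemma~\ref{boundCtproduct} without comment. Your instinct that this is harmless because the lemma is only applied downstream with minimal $t$ (in Lemmas~\ref{bigt} and~\ref{smallt}) is the correct resolution. However, your proposed in-proof fix --- ``otherwise replace $X$ by a smaller cycle'' --- does not actually work: $X$ and $t$ are fixed in the hypothesis, and if you switch to some $X'\subsetneq [n]$ with $|X'|=t'<t$, you lose control of $P(G[[n]\setminus X'])$ relative to $\calP(W(n-t'))$, since the assumed bound is only for $[n]\setminus X$ and $\calP(W(n-t))$. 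The clean fix is either to add the minimality hypothesis to the statement of Lemma~\ref{lemmallt} (which costs nothing, since that is exactly how it is used), or to note explicitly that the proof establishes the conclusion under that additional assumption.
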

\begin{proof}
Let $Y=[n]\setminus X$.  Because $G[X]\cong C_t(3,2)$, $P(G)=P(G[Y])3^t2^{{t\choose 2}-t}\prod_{z\in Y}P^G_z(X)$.  By Lemma \ref{boundCtproduct}, for each $z\in Y$, $P^G_z(X)\leq 3\cdot 2^{t-1}$.  This implies 
\begin{align}\label{ineq0}
P(G)\leq P(G[Y])3^t2^{{t\choose 2}-t}\Big(3\cdot 2^{t-1}\Big)^{n-t}= P(G[Y])3^n2^{{t\choose 2}+t(n-t)-n}= P(G[Y])h(n,t).
\end{align}
By assumption, $P(G[Y])\leq \nu P(A)$, so (\ref{ineq0}) implies $P(G)\leq \nu P(A)h(n,t)$.  Combining this with Lemma \ref{GXlem} yields
$$
P(G)\leq \nu P(A)h(n,t)=\nu P(A)f(n,t)(h(n,t)/f(n,t))\leq \nu P(\mathcal{G}_X)(h(n,t)/f(n,t)).
$$
\end{proof}

\noindent The following will be proved in the Appendix.

\begin{lemma}\label{computationallem}
There are $\gamma>0$ and $5<K\leq M_1$ such that the following holds.
\begin{enumerate}
\item For all $K\leq t\leq n$, $h(n,t)< f(n,t)$.
\item For all $5\leq t\leq K$ and $n\geq M_1$, $h(n,t)<2^{-\gamma n}f(n,t)$.
\end{enumerate}
\end{lemma}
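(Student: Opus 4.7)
The plan is to take base-2 logarithms and carry out a careful asymptotic comparison. Writing $u := \log_2 3$, the definition $\beta = u/(2u-1)$ yields $2\gamma = u^2/(2u-1)$ and hence $2\gamma - 1 = (u-1)^2/(2u-1) > 0$; in particular $\gamma > 1/2$ and $1 - 2\gamma < 0$. The same relation gives the algebraic identity $\beta + u(1-2\beta) = 0$, which will prove decisive below.

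The first step is to derive the sharp asymptotic
\[
\log_2 f(n,t) \;=\; \gamma\, t(2n-t) - \tfrac{\beta t}{2} + O(1),
\]
uniform for $t \leq n$. Write $a = \lceil\beta t\rceil = \beta t + \alpha$, $b = \lfloor(1-\beta)t\rfloor = (1-\beta)t + \delta$, $c = \beta(n-t) + \eta$, and expand $A = \binom{a}{2} + ac$, $B = ab + cb + a(n-t-c)$. Since $\beta$ is irrational and $t$ is an integer, $a + b = t$ exactly, so $\alpha + \delta = 0$. Collecting error terms by type, the total $\alpha$-linear-in-$n$ contribution to $A + uB$ equals $\alpha n[\beta + u(1-2\beta)] = 0$, and the total $\eta$-linear-in-$t$ contribution equals $\eta t[\beta + u(1-2\beta)] = 0$. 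Thus every would-be $O(n)$ correction vanishes identically, leaving only bounded residue from the $\alpha^2$, $\alpha\eta$ cross-terms.

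Combining with $\log_2 h(n,t) = nu - n + tn - t^2/2 - t/2$ gives
\[
\Phi(n,t) := \log_2 h - \log_2 f = n\bigl[(u-1) + t(1-2\gamma)\bigr] + t^2(\gamma - \tfrac{1}{2}) + \tfrac{t(\beta-1)}{2} + O(1).
\]
For part 2, on $5 \leq t \leq K$ the partial derivative $\partial_t\Phi = n(1-2\gamma) + 2t(\gamma - 1/2) + O(1)$ is negative once $n$ is large compared to $K$, so $\Phi$ is decreasing in $t$ and its maximum on $[5,K]$ occurs at $t = 5$. The $n$-coefficient there is $(u-1)+5(1-2\gamma)=(u-1)(4-3u)/(2u-1)$, strictly negative because $3^3 = 27 > 16 = 2^4$ gives $3u > 4$. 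Setting $\delta_1 := (u-1)(3u-4)/(2u-1) > 0$, taking $\gamma_0 := \delta_1/2$, and choosing $M_1$ large enough to absorb the $O(1)$ residue into $\delta_1 n/2$ yields $\Phi(n,t) \leq -\gamma_0 n$. For part 1, on $K \leq t \leq n$ the expression $\Phi(n,t)$ is a convex quadratic in $t$ (since $\gamma-1/2 > 0$), so its maximum on $[K,n]$ is attained at an endpoint. At $t = n$ it equals $n^2(1/2-\gamma) + O(n) < 0$ for $n$ large. At $t = K$ the $n$-coefficient is $(u-1) - K(2\gamma-1)$, negative once $K > (2u-1)/(u-1) \approx 3.7$; choosing $K$ sufficiently large absorbs the $O(K^2)$ correction and gives $\Phi(n,K) < 0$ for every $n \geq K$.

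The main obstacle is the error cancellation in the first step. A naive bound on the integer-rounding corrections in $A + uB$ is $O(n)$, which would swamp the $-\delta_1 n$ gain in part 2 and collapse the argument. The cancellation rests on the pair of identities $\alpha + \delta = 0$ (from integrality of $t$ and irrationality of $\beta$) and $\beta + u(1-2\beta) = 0$ (the defining equation of $\beta$), and appears essential rather than incidental; once it is spotted, the remaining estimates are routine, and explicit numerical values for $\gamma_0$, $K$, and $M_1$ can be read off from the constants above.
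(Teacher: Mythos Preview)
Your proof is correct and shares its core with the paper's: both pass to the smooth proxy $f_*(n,t)=2^{\binom{\beta t}{2}+\beta^2 t(n-t)}3^{2\beta(1-\beta)t(n-t)+\beta(1-\beta)t^2}$ and exploit the identity $\beta+u(1-2\beta)=0$ (equivalently the paper's fact $-x-\log_23+2x\log_23=0$) to kill the dangerous linear-in-$n$ rounding terms. For part~2 the two arguments coincide: both reduce to the negativity of the $n$-coefficient at $t=5$, which is exactly the paper's inequality~(III), your $(u-1)(4-3u)/(2u-1)<0$.

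Where you diverge is in sharpness and in the handling of part~1. The paper bounds the two arguments $c$ and $n-t-c$ of $f$ \emph{separately} from below, losing the constraint $c+(n-t-c)=n-t$ and incurring an $O(t)$ error (Proposition~1 gives $f\ge f_*\cdot 2^{-\beta t-3/2}3^{-t-1}$). You keep that constraint and observe that the resulting $\eta t$-term also carries the factor $\beta+u(1-2\beta)=0$, so the total rounding error is $O(1)$. This refinement is elegant but not strictly needed here: even with an $O(t)$ error the lower-order terms are dominated. More substantively, for part~1 the paper proceeds by a staged reduction via the auxiliary inequalities (I) and (II), peeling off the $tn$- and $t^2$-pieces of $C_1+C_2\log_23$ until only $3^{p(n,t)}$ with $p(n,t)=-(\beta/6)(1-\beta)tn+2n+2$ remains. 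Your route is shorter: since the $t^2$-coefficient of $\Phi$ equals $\gamma-\tfrac12>0$, the explicit quadratic part of $\Phi(n,\cdot)$ is convex, so its maximum on $[K,n]$ is at an endpoint, and checking $t=K$ and $t=n$ suffices. One small caveat worth making explicit: the $O(1)$ error depends on $t$ (through the fractional parts) and is not smooth, so ``convex'' should be read as applying to the explicit quadratic $Q(n,t)$, with $\Phi=Q+O(1)$; you then need $Q<-C$ at both endpoints, which your estimates give once $K$ is chosen large enough.
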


\begin{lemma}\label{bigt}
Let $K$ be from Lemma \ref{computationallem}.  Then for all $K\leq t\leq n$, the following holds.  If $G\in C(n)$, $C_t(3,2)\subseteq G$, and $C_{t'}(3,2)\nsubseteq G$ for all $t'<t$, then for all $G_1\in \calP(W(n))$, $P(G)<P(G_1)$.
\end{lemma}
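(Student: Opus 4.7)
The plan is to induct on $n$. I would structure everything around showing the key inequality $P(G[Y]) \le P(A)$ for any $A \in \calP(W(n-t))$, where $X \in \binom{[n]}{t}$ is the vertex set of a copy of $C_t(3,2)$ in $G$ and $Y = [n] \setminus X$. Once this is in hand, Lemma \ref{lemmallt} applied with $\nu = 1$ gives $P(G) \le (h(n,t)/f(n,t))\, P(\mathcal{G}_X)$; combining this with Lemma \ref{computationallem}(1) (so $h(n,t)/f(n,t) < 1$ since $t \ge K$) and the fact that $\mathcal{G}_X \in W(n)$ yields $P(G) < P(\mathcal{G}_X) \le P(G_1)$, as desired.

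To establish $P(G[Y]) \le P(A)$, I first observe that $G[Y] \in C(n-t)$, since every defining condition of $C$ (the $(4,15)$-condition, $\mu \le 3$, the $(3,8)$-condition, and the omission of $(3,1,1)$-, $(2,1,1)$-, and $(3,2,1)$-triangles) is hereditary. I then split into two cases. If $G[Y] \in NC(n-t)$, Lemma \ref{extNCareinW} gives $\calP(W(n-t)) \subseteq \calP(NC(n-t))$, so $P(G[Y]) \le P(A)$ directly. Otherwise $G[Y]$ contains some $C_{t''}(3,2)$; taking $t''$ minimal, the fact that no $C_{t'}(3,2)$ with $t' < t$ embeds in $G$ (and hence not in $G[Y]$) forces $t'' \ge t \ge K$. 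Since $G[Y]$ has $n - t < n$ vertices and $G[Y] \in C(n-t)$ has smallest cycle $C_{t''}(3,2)$ with $t'' \ge K$, the inductive hypothesis applies to $G[Y]$ and yields $P(G[Y]) < P(G_1')$ for any $G_1' \in \calP(W(n-t))$, and in particular $P(G[Y]) < P(A)$.

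The base case is $n = K$: here $t = K$ is forced, $X = [n]$, and $G \cong C_K(3,2)$, so $Y = \emptyset$ and $P(G[Y]) = 1 = P(A)$ with $A$ the empty graph in $\calP(W(0))$, and the argument above goes through unchanged. The main (minor) obstacle is really just confirming that the two cases above exhaust the possibilities and that minimality of the cycle length transfers from $G$ to $G[Y]$ — both of which are immediate from heredity. The substantive content has already been packaged into Lemmas \ref{lemmallt}, \ref{GXlem}, \ref{computationallem}, and \ref{extNCareinW}; once these are in place, the proof of Lemma \ref{bigt} is a clean induction whose only delicate point is the interplay between the minimal-cycle assumption and the inductive hypothesis on $G[Y]$.
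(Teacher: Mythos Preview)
Your proposal is correct and follows essentially the same approach as the paper. The paper inducts on $i=n-t$ (with base case $i=0$, i.e.\ $n=t$, handled by a direct computation $P(G)=h(t,t)<f(t,t)$), while you induct on $n$ (with base case $n=K$, handled via the general machinery on the empty graph $G[Y]$); both prove the key claim $P(G[Y])\le P(A)$ by the identical two-case split (either $G[Y]\in NC(n-t)$ and Lemma~\ref{extNCareinW} applies, or $G[Y]$ contains a minimal $C_{t''}(3,2)$ with $t''\ge t\ge K$ and the induction hypothesis applies), and then finish with Lemma~\ref{lemmallt} at $\nu=1$ and Lemma~\ref{computationallem}(1). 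The only cosmetic wrinkle is that in your induction on $n$, the case $t=n$ (so $Y=\emptyset$) recurs in every inductive step rather than being isolated as a base case, and there Lemma~\ref{extNCareinW} is invoked at $n-t=0$, just outside its stated range---but of course the empty graph satisfies $P(G[\emptyset])=1=P(A)$ trivially.
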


\begin{proof}
Let $t\geq K$ and $n=t+i$.  We proceed by induction on $i$.  Suppose first $i=0$.  Fix $G\in C(n)$ such that $C_t(3,2)\subseteq G$ and $C_{t'}(3,2)\nsubseteq G$ for all $t'<t$.  Then $n=t$ implies $G\cong C_t(3,2)$ and so $P(G)=3^t2^{{t\choose 2}-t}=h(t,t)$.  Let $H\in W(n)$ have 
$|R(H)|\in \{\lceil \beta n\rceil, \lfloor \beta n \rfloor\}$ and $L(H)=V(H)-R(H)$.  Then by definition of $f(n,t)$,
\begin{align*}
P(H)&=2^{|R(H)|\choose 2}3^{|L(H)||R(H)|}= f(t,t)>h(t,t)=P(G),
\end{align*}
where the inequality is by part (1) of Lemma \ref{computationallem}.  Since $H\in W(n)$, this implies $P(G)<P(G_1)$ for all $G_1\in \calP(W(n))$. 

Suppose now that $i>0$.  Assume by induction that the conclusion of Lemma \ref{bigt} holds for all $K\leq t_0\leq n_0$ where $n_0=t_0+j$ and $0\leq j<i$.  Fix $G\in C(n)$ such that $C_t(3,2)\subseteq G$ and $C_{t'}(3,2)\nsubseteq G$ for all $t'<t$.  Let $X\in {[n]\choose t}$ be such that $G[X]\cong C_t(3,2)$.  
\begin{claim}\label{G/Xclaim}
For any $A\in \calP(W(n-t))$, $P(G[[n]\setminus X])\leq P(A)$.
\end{claim}
\par\noindent{\it Proof.}
Note that $C_{t'}(3,2)\nsubseteq G[[n]\setminus X]$ for all $3\leq t'<t$.  We have two cases.
\begin{enumerate}[(1)]
\item If $C_{t'}(3,2)\nsubseteq G[[n]\setminus X]$ for all $t'\geq t$, then $G[[n]\setminus X]$ is isomorphic to some $D\in NC(n-t)$.  By Lemma \ref{extNCareinW}, for any $A\in \calP(W(n-t))$, $P(G[[n]\setminus X])=P(D)\leq P(A)$.
\item If $C_{t'}(3,2)\subseteq G[[n]\setminus X]$ for some $t'\geq t$, then fix $t_0$ the smallest such $t'$, and set $n_0=n-t$.  Our assumptions imply $t_0\geq t\geq K$ and $t_0\leq |[n]\setminus X|=n-t=i$, so 
$$
n_0=n-t=t_0+(n-t-t_0)= t_0+i-t_0=t_0+j,
$$
where $0\leq i-t_0=j<i$.  Note $G[[n]\setminus X]$ is isomorphic to some $D\in C(n-t)=C(n_0)$.  Then we have that $K\leq t_0\leq n_0$, $n_0=t_0+j$, $0\leq j<i$, and $D\in C(n_0)$ satisfies $C_{t_0}(3,2)\subseteq D$ and $C_{t'}(3,2)\nsubseteq D$ for all $t'<t_0$. By our induction hypothesis, for any $A\in \calP(W(n_0))=\calP(W(n-t))$, $P(G[[n]\setminus X])=P(D)< P(A)$.
\end{enumerate}

\noindent Claim \ref{G/Xclaim} and Lemma \ref{lemmallt} with $\nu=1$ imply $P(G)\leq (h(n,t)/f(n,t))P(\mathcal{G}_X)$.  Since $K\leq t\leq n$, Lemma \ref{computationallem} part (1) implies $h(n,t)/f(n,t)<1$, so this shows $P(G)<P(\mathcal{G}_X)$.  Since $\mathcal{G}_X\in W(n)$, we have $P(G)< P(\mathcal{G}_X)\leq P(G_1)$ for all $G_1\in \calP(W(n))$.  
\end{proof}

\begin{lemma}\label{smallt}
Let $M_1$ and $K$ be as in Lemma \ref{computationallem}.  There is $M_2$ such that for all $5\leq t\leq K$ and $n\geq M_1+K$, the following holds. If $G\in C(n)$, $C_t(3,2)\subseteq G$, and $C_{t'}(3,2)\nsubseteq G$ for all $t'<t$, then for all $G_1\in \calP(W(n))$, $P(G)\leq 2^{M_2}(h(n,t)/f(n,t))P(G_1)$.
\end{lemma}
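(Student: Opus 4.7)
The plan is to iterate the basic estimate $P(G)\le h(n,t)P(G[[n]\setminus X])$ from inequality~(\ref{ineq0}) in the proof of Lemma~\ref{lemmallt}. Starting from the smallest $(3,2)$-cycle $X=X_1$ of length $t=t_1$ in $G$, I peel off successively smallest copies $X_i$ of $C_{t_i}(3,2)$ with $5\le t_i\le K$ from the residual graph, until the remaining graph either contains no $(3,2)$-cycle of length $\ge 5$, or has its smallest such cycle of length strictly greater than $K$. In the former terminal case Lemma~\ref{extNCareinW} bounds the residual product by $P(A)$ for any $A\in\calP(W(\cdot))$; in the latter case Lemma~\ref{bigt} does the same (its hypothesis $n_r\ge K+1$ is automatic, since a cycle of length $>K$ must fit in the residual graph).

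Formally, set $H_0=G$, $Y_0=[n]$, and inductively, whenever $H_{i-1}:=G[Y_{i-1}]$ has a smallest $(3,2)$-cycle of length $t_i\in[5,K]$, I pick such a copy $X_i\subseteq Y_{i-1}$, set $Y_i=Y_{i-1}\setminus X_i$ and $n_i=n_{i-1}-t_i$, and continue; the process stops at some step $r$. Since $C(n)$ is closed under induced subgraphs, $H_i\in C(n_i)$ throughout, so the smallest-cycle hypothesis of Lemma~\ref{boundCtproduct} holds at every step. Inequality~(\ref{ineq0}) then supplies $P(H_{i-1})\le h(n_{i-1},t_i)P(H_i)$ for each $i$, while Lemma~\ref{GXlem} supplies $p(n_{i-1})\ge f(n_{i-1},t_i)\,p(n_i)$, where $p(m):=\max_{H\in W(m)}P(H)$. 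Telescoping these two chains together with the terminal bound $P(H_r)\le p(n_r)$ gives
\[
\frac{P(G)}{P(G_1)}\ \le\ \frac{h(n,t)}{f(n,t)}\prod_{i=2}^{r}\frac{h(n_{i-1},t_i)}{f(n_{i-1},t_i)}.
\]

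It remains to absorb the tail product into a constant $2^{M_2}$. By Lemma~\ref{computationallem}(2), every factor with $n_{i-1}\ge M_1$ is at most $2^{-\gamma n_{i-1}}\le 1$, so only indices with $n_{i-1}<M_1$ contribute nontrivially. Since $n_{i-1}$ decreases by $\ge 5$ per step and the iteration halts once $n_{i-1}<5$, at most $\lceil M_1/5\rceil$ such indices can occur, and each corresponding ratio is bounded by $C_0:=\max\{h(m,s)/f(m,s):5\le s\le K,\ s\le m<M_1\}$. Any integer $M_2\ge\lceil M_1/5\rceil\log_2\max(C_0,2)$ then works. The main obstacle I foresee is the bookkeeping: synchronizing the two telescoping chains, verifying that both terminal regimes feed into the same comparison with $P(G_1)$, and isolating the finitely many ``boundary'' steps with $n_{i-1}<M_1$ into the universal constant $M_2$. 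The decisive analytic input, the exponential decay $h/f<2^{-\gamma n}$ for $n\ge M_1$ and $5\le t\le K$, is already packaged in Lemma~\ref{computationallem}(2) and is what makes only a bounded initial segment of the iteration matter.
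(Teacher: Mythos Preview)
Your proof is correct and follows essentially the same strategy as the paper's: iteratively peel off smallest $(3,2)$-cycles, compare $P$ against the optimal $W$-construction via the $h/f$ ratios, and use Lemma~\ref{computationallem}(2) to make all but finitely many of these ratios harmless. The paper packages this as a formal induction on $n$, choosing $M_2$ so that $\expi(n,4,15)\le 2^{M_2}h(n,t)/f(n,t)$ for all $t\le n\le M_1+K$ (thereby absorbing every small-$n$ residual into a single base case), while you unroll the recursion into an explicit telescoping product and bound the boundedly many factors with $n_{i-1}<M_1$ directly; the content is the same.
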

\begin{proof}
Set $M=M_1+K$.  Choose $M_2$ sufficiently large so that for all $5\leq t\leq K$ and $t\leq n\leq M$, $\expi(n,4,15)\leq 2^{M_2}(h(n,t)/f(n,t))$.  We show the conclusions of Lemma \ref{smallt} hold for all $n\geq M$ by induction.  Suppose first $n=M$. Fix $5\leq t\leq K$ and $G\in C(n)$ such that $C_t(3,2)\subseteq G$ and $C_{t'}(3,2)\nsubseteq G$ for all $t'<t$.  Then by our choice of $M_2$,
$$
P(G)\leq \expi(n,4,15)\leq 2^{M_2}(h(n,t)/f(n,t))\leq 2^{M_2}(h(n,t)/f(n,t))P(G_1),
$$
for all $G_1\in \calP(W(n))$.  Suppose now $n>M$.  Assume by induction the conclusions of Lemma \ref{smallt} hold for all $5\leq t_0\leq K$ and $M\leq n_0<n$.  Fix $5\leq t\leq K$ and $G\in C(n)$ such that $C_t(3,2)\subseteq G$ and $C_{t'}(3,2)\nsubseteq G$ for all $t'<t$.   Let $X\in {[n]\choose t}$ be such that $G[X]\cong C_t(3,2)$ and set $n_0=n-t$.
\begin{claim}\label{G/Xclaim2}
For any $A\in \calP(W(n_0))$, $P(G[[n]\setminus X])\leq 2^{M_2}P(A)$,
\end{claim}
\par\noindent{\it Proof.}
Fix $A\in \calP(W(n_0))$.  Note that $C_{t'}(3,2)\nsubseteq G[[n]\setminus X]$ for all $t'<t$ and $n_0\geq M_1\geq K$ (since $n-t\geq M-K=M_1$).  We will use the following observation. 
\begin{align}\label{goal}
\text{ For all $5\leq t_0\leq K$, $\frac{h(n_0,t_0)}{f(n_0,t_0)}\leq 2^{-\gamma n_0}<1$.}
\end{align}
This holds by Lemma \ref{computationallem} part (2) and the fact that $n_0\geq M_1$.  Suppose first $n_0<M$.  Then $G[[n]\setminus X]$ is isomorphic to some $D\in F(n_0,4,15)$ and $n_0\geq K$, so by our choice of $M_2$,
$$
P(G[[n]\setminus X])=P(D)\leq \expi(n_0,4,15)\leq 2^{M_2}\frac{h(n_0,K)}{f(n_0,K)}P(A)\leq 2^{M_2}P(A),
$$
where the last inequality is by (\ref{goal}).  Assume now $n_0\geq M$.  We have two cases.
\begin{enumerate}[(1)]
\item If $C_{t'}(3,2)\nsubseteq G[[n]\setminus X]$ for all $t'\geq t$, then $G[[n]\setminus X]$ is isomorphic to some $D\in NC(n_0)$.  By Lemma \ref{extNCareinW}, $P(G[[n]\setminus X])=P(D)\leq P(A)\leq 2^{M_2}P(A)$.
\item If $C_{t'}(3,2)\subseteq G[[n]\setminus X]$ for some $t'\geq t$, choose $t_0$ the smallest such $t'$, and let $D\in C(n_0)$ be such that $G[[n]\setminus X]\cong D$.  Suppose first $t_0\leq K$.  Then we have $5\leq t\leq t_0\leq K$, $M\leq n_0<n$, $D\in C(n_0)$, $C_{t_0}(3,2)\subseteq D$, and $C_{t'}(3,2)\nsubseteq D$ for all $t'<t_0$.  Therefore our induction hypothesis implies the conclusions of Lemma \ref{smallt} hold for $D$, $n_0$, $t_0$. In other words, since $A\in \calP(W(n_0))$,
$$
P(G[[n]\setminus X])=P(D)\leq 2^{M_2}\frac{h(n_0,t_0)}{f(n_0,t_0)}P(A)\leq 2^{M_2}P(A),
$$
where the last inequality is by (\ref{goal}).   Suppose finally that $t_0>K$.  Then $K\leq t_0\leq n_0$, $D\in C(n_0)$, $C_{t_0}(3,2)\subseteq D$, and $C_{t'}(3,2)\nsubseteq D$ for all $t'<t_0$.  Thus we have by Lemma \ref{bigt} that $P(G[[n]\setminus X])=P(D)<P(A)\leq 2^{M_2}P(A)$.
\end{enumerate}

\noindent Claim \ref{G/Xclaim2} and Lemma \ref{lemmallt} with $\nu=2^{M_2}$ imply $P(G)\leq 2^{M_2}(h(n,t)/f(n,t))P(\mathcal{G}_X)$.  Since $\mathcal{G}_X$ is in $W(n)$, we have that $P(G)\leq 2^{M_2}(h(n,t)/f(n,t))P(\mathcal{G}_X)\leq 2^{M_2}(h(n,t)/f(n,t))P(G_1)$, for all $G_1\in \calP(W(n))$.
\end{proof}

We can now prove that for large $n$, all product-extremal elements of $C(n)$ are in $NC(n)$.
\begin{lemma}\label{extGinCareinNC}
For all sufficiently large $n$, $\calP(C(n))\subseteq NC(n)$.  Consequently, $\calP(C(n))=\calP(NC(n))$.
\end{lemma}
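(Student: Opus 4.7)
The plan is to proceed by contradiction, combining the two lemmas just established with Lemma \ref{computationallem}. First, I would assume $G \in \calP(C(n))$ and $G \notin NC(n)$; then $G$ contains a copy of $C_t(3,2)$ for some $t \geq 5$ (the cases $t = 3, 4$ are ruled out because $C(n) \subseteq F(n,3,8) \cap F(n,4,15)$, and $S(C_4(3,2)) = 16 > 15$). Let $t$ be the \emph{smallest} such value, so that the minimality hypotheses of both Lemma \ref{bigt} and Lemma \ref{smallt} are satisfied.

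Next, I would pick any $G_1 \in \calP(W(n))$ as a competitor. Since $W(n) \subseteq C(n)$, showing $P(G) < P(G_1)$ contradicts product-extremality of $G$ in $C(n)$. Split on the size of $t$: if $t \geq K$, Lemma \ref{bigt} gives $P(G) < P(G_1)$ immediately. If $5 \leq t \leq K$ and $n \geq M_1 + K$, Lemma \ref{smallt} gives $P(G) \leq 2^{M_2}(h(n,t)/f(n,t)) P(G_1)$; I would then invoke part (2) of Lemma \ref{computationallem} (applicable since $n \geq M_1$), which yields $h(n,t)/f(n,t) < 2^{-\gamma n}$. Consequently $P(G) \leq 2^{M_2 - \gamma n} P(G_1) < P(G_1)$ as soon as $n > M_2/\gamma$. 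So taking $n \geq \max(M_1 + K, \lceil M_2/\gamma \rceil + 1)$ produces a contradiction in both cases and establishes $\calP(C(n)) \subseteq NC(n)$.

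For the ``consequently'' clause I would simply observe that $NC(n) \subseteq C(n)$ gives $\max_{NC(n)} P \leq \max_{C(n)} P$, while the inclusion just proved exhibits an element of $NC(n)$ that attains $\max_{C(n)} P$; hence the two maxima coincide and $\calP(C(n)) = \calP(C(n)) \cap NC(n) = \calP(NC(n))$. I do not anticipate a real obstacle at this step: the substantive work lives in Lemmas \ref{bigt}, \ref{smallt}, and \ref{computationallem}, and the present lemma is just the bookkeeping step that stitches the two regimes (small $t$ versus large $t$) together into the clean statement needed for Theorem \ref{EXTGINDAREINW}. The only point requiring care is ensuring the threshold on $n$ is genuinely finite, which follows from $\gamma > 0$ in Lemma \ref{computationallem}.
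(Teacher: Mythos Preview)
Your proposal is correct and follows essentially the same approach as the paper's own proof: argue by contradiction, take the minimal $t\geq 5$ with $C_t(3,2)\subseteq G$, split into the cases $t\geq K$ (handled by Lemma~\ref{bigt}) and $5\leq t\leq K$ (handled by Lemma~\ref{smallt} together with part~(2) of Lemma~\ref{computationallem}), and then choose $n$ large enough that $2^{M_2-\gamma n}<1$. The ``consequently'' clause is also argued in the same way.
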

\begin{proof}
Let $\gamma$, $K$, and $M_1$ be as in Lemma \ref{computationallem} and let  $M_2$ be as in Lemma \ref{smallt}.  Choose $M\geq M_1+K$ sufficiently large so that $2^{M_2-\gamma n}<1$ for all $n\geq M$.  Suppose $n>M$ and $G\notin NC(n)$.  We show $G\notin \calP(C(n))$.  Clearly if $G\notin C(n)$ we are done, so assume $G\in C(n)$.  Since $W(n)\subseteq C(n)$, it suffices to show there is $G_1\in W(n)$ such that $P(G_1)>P(G)$.  Since $G\notin NC(n)$, there is $5\leq t\leq n$ such that $C_t(3,2)\subseteq G$ and for all $t'<t$, $C_{t'}(3,2)\nsubseteq G$.  If $t\geq K$, then Lemma \ref{bigt} implies that for any $G_1\in \calP(W(n))$, $P(G)<P(G_1)$.  If $5\leq t<K$, then Lemma \ref{smallt} implies that for any $G_1\in \calP(W(n))$, 
$$
P(G)\leq 2^{M_2}(h(n,t)/f(n,t))P(G_1)\leq 2^{M_2-\gamma n}P(G_1),
$$
where the second inequality is because of Lemma \ref{computationallem} part (2). By our choice of $M$, this implies that for all $G_1\in W(n)$, $P(G)<P(G_1)$.  This shows $\calP(C(n))\subseteq NC(n)$.  Since $NC(n)\subseteq C(n)$, this implies $\calP(C(n))=\calP(NC(n))$.
\end{proof}

We can prove the main result of this section, Theorem \ref{EXTGINDAREINW}.

\vspace{3mm}

\noindent {\bf Proof of Theorem \ref{EXTGINDAREINW}.} Assume $n$ sufficiently large.  By Lemma \ref{EXTGINBCAPAAREINC}, we can choose some $G$ in $ \calP(D(n))\cap C(n)=\calP(C(n))$. By Lemma \ref{extGinCareinNC}, $\calP(C(n))=\calP(NC(n))$, so $G\in \calP(NC(n))$.  By Lemma \ref{extNCareinW}, there is some $G'\in \calP(NC(n))\cap W(n)=\calP(W(n))$.  Since $G$ and $G'$ are both in $\calP(NC(n))$, $P(G)=P(G')$. Since $G\in \calP(D(n))$ and $W(n)\subseteq D(n)$, this implies that $G'\in \calP(D(n))$.  Thus we have shown $G'\in \calP(D(n))\cap \calP(W(n))$.
\qed

\section{Proof of Theorem \ref{EXTGAREIND}}\label{sectionEXTGAREIND}
In this section we prove Theorem \ref{EXTGAREIND}.  We will need the following computational lemma, which is proved in the appendix.  Given $n,t$, let $k(n,t)=15^t2^{{t\choose 2}+t(n-t)-t}$.

\begin{lemma}\label{complem2}
There is $M$ such that for all $n\geq M$ and $2\leq t\leq n$, $k(n,t)<f(n,t)$.
\end{lemma}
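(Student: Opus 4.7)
The plan is to take logarithms base $2$ and compare $\log_2 k(n,t)$ with a careful lower bound on $\log_2 f(n,t)$. Setting $s_1 = \lceil\beta t\rceil$ and $s_2 = \lfloor(1-\beta)t\rfloor$, one checks $s_1 + s_2 = t$, which rewrites the exponent of $3$ in the definition of $f$ as $s_1(n-s_1) + c(t - 2s_1)$. Thus
\[
\log_2 f(n,t) = \binom{s_1}{2} + s_1 c + \log_2 3 \cdot \bigl[s_1(n-s_1) + c(t-2s_1)\bigr],
\]
where $c \in \{\lfloor\beta(n-t)\rfloor, \lceil\beta(n-t)\rceil\}$, while $\log_2 k(n,t) = tn - t^2/2 + t(\log_2 15 - 3/2)$.

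The crucial algebraic fact is the identity $2\beta\log_2 3 = \beta + \log_2 3$, which is just a restatement of the defining equation $\beta(2\log_2 3 - 1) = \log_2 3$. Writing $s_1 = \beta t + \delta_1$ and $c = \beta(n-t) + \delta_c$ with $\delta_1, \delta_c \in [0, 1)$, this identity has two consequences. First, the coefficient of $c$ in $\log_2 f(n,t)$ simplifies to $\delta_1(1 - 2\log_2 3) \le 0$, so the minimum is always attained at $c = \lceil\beta(n-t)\rceil$. Second, and crucially, when we expand $\log_2 f(n,t)$ about the ``continuous'' values $\beta t$ and $\beta(n-t)$, the terms that would ordinarily contribute the dominant $O(n)$ error appear as $\bigl[\beta + (1 - 2\beta)\log_2 3\bigr] \cdot (n\delta_1 + t\delta_c)$, and so they vanish identically. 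What survives is a main term
\[
\phi(n,t) := \tfrac{\beta^2 t^2}{2} + \beta^2 t(n-t) + \log_2 3 \cdot \beta(1-\beta)\,t(2n-t),
\]
plus an absolute error bounded by $O(t) + O(1)$ rather than the naive $O(n)$.

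One further short rearrangement, using the corollary $\beta\log_2 3 - (1+\beta)/2 = (\log_2 3 - 1)/2$ of the same identity, yields
\[
\phi(n,t) - \log_2 k(n,t) = \tfrac{(1-\beta)(\log_2 3 - 1)}{2}\, t(2n-t) + t\bigl(\tfrac{3}{2} - \log_2 15\bigr).
\]
For $2 \le t \le n$ we have $t(2n-t) \ge tn \ge 2n$, so the first term grows as $\Omega(tn)$ with strictly positive leading coefficient, while the explicit $t(\tfrac{3}{2} - \log_2 15)$ contribution and the bounded $O(t) + O(1)$ error together are at most linear in $t$ with an absolute additive constant. Choosing $M$ so that $\tfrac{(1-\beta)(\log_2 3-1)}{2} M$ exceeds the sum of those constants suffices; the worst case is $t = 2$, where the main gap is only linear in $n$, and for all $t \ge 2$ the inequality $k(n,t) < f(n,t)$ then holds uniformly in $n \ge M$.

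The main obstacle is precisely the potential $O(n)$ size of the floor/ceiling errors. A naive Taylor-style comparison produces a main gap of only $\Omega(tn)$, which for $t$ equal to a small constant like $2$ is merely $\Omega(n)$ and would be completely swamped by an $O(n)$ error with unfavorable constant. The identity $2\beta\log_2 3 = \beta + \log_2 3$---essentially the algebraic relation that pins down the transcendental choice of $\beta$---is the specific fact that forces those dangerous terms to cancel, and therefore it carries the whole weight of the proof.
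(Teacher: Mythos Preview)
Your argument is correct and rests on exactly the same algebraic fact the paper uses: the defining relation $2\beta\log_2 3=\beta+\log_2 3$ forces the potentially dangerous $O(n)$ floor/ceiling errors to cancel, leaving a main gap of order $tn$ against an $O(t)$ remainder. In the paper this cancellation is isolated once and for all in Proposition~\ref{prop1} (the identity appears there as~(\ref{fact1})), giving the lower bound $f(n,t)\ge f_*(n,t)\,2^{-\beta t-3/2}3^{-t-1}$; the proof of Lemma~\ref{complem2} then splits into two ranges of $t$, recycling the bound $h(n,t)/f(n,t)\le 3^{p(n,t)}$ from Lemma~\ref{computationallem} for large $t$ and applying Proposition~\ref{prop1} together with inequality~(\ref{line5}) for bounded $t$.

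Your route is more direct: you expand $\log_2 f(n,t)$ in one pass, observe that both the $n\delta_1$ and the $t\delta_c$ contributions carry the coefficient $\beta+(1-2\beta)\log_2 3=0$, and obtain the closed form
\[
\phi(n,t)-\log_2 k(n,t)=\tfrac{(1-\beta)(\log_2 3-1)}{2}\,t(2n-t)+t\bigl(\tfrac{3}{2}-\log_2 15\bigr),
\]
which treats all $2\le t\le n$ uniformly with no case split. This is a cleaner packaging of the same computation; the paper's modular version has the compensating advantage that Proposition~\ref{prop1} is written once and reused for both Lemma~\ref{computationallem} and Lemma~\ref{complem2}.
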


The following can be checked easily by hand and is left to the reader.

\begin{lemma}\label{arithgeom}
Suppose $a$, $b$, and $c$ are non-negative integers.  If $a+b\leq 4$, then $a\cdot b\leq 2^2$.  If $a+b+c\leq 6$, then $a\cdot b \cdot c\leq 2^3$.
\end{lemma}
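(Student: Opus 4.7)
The plan is to deduce both inequalities directly from the AM-GM inequality specialized to two and three variables. Recall that for non-negative reals $x_1,\ldots,x_k$, AM-GM gives $(x_1\cdots x_k)^{1/k} \le (x_1+\cdots+x_k)/k$. For the first claim, applying this with $k=2$ yields $ab \le \bigl((a+b)/2\bigr)^2 \le (4/2)^2 = 2^2$. For the second claim, applying it with $k=3$ gives $abc \le \bigl((a+b+c)/3\bigr)^3 \le (6/3)^3 = 2^3$. Both inequalities follow at once from the hypotheses.

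Since $a,b,c$ are non-negative integers with $a+b \le 4$ or $a+b+c \le 6$, only finitely many tuples satisfy the hypothesis, so as a purely mechanical alternative one may simply enumerate them. In either case both bounds are tight: the first is attained at $(a,b)=(2,2)$ and the second at $(a,b,c)=(2,2,2)$, in agreement with the AM-GM equality condition that the factors be equal to the common average.

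There is essentially no obstacle; the lemma is elementary. Its role is presumably to package these two numerical facts in a convenient form for later use in the enumeration of $(s,q)$-constrained multigraph edge products: because both extremal values happen to be powers of $2$, the bounds slot cleanly into subsequent estimates of the form $\prod w(xy) \le 2^{\text{something}}$ when the sum $\sum w(xy)$ over a small set of edges is constrained. No case analysis beyond the single AM-GM application is needed in either part.
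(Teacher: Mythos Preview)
Your proof is correct. The paper does not give a proof at all, stating only that the lemma ``can be checked easily by hand and is left to the reader,'' which suggests direct enumeration; your AM-GM argument is cleaner and equally valid, and you even note the enumeration alternative yourself.
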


\noindent {\bf Proof of Theorem \ref{EXTGAREIND}.} Let $n$ be sufficiently large.  It suffices to show $\calP(n,4,15)\subseteq D(n)$.  Suppose towards a contradiction there is $G=([n],w)\in \calP(F(n,4,15))\setminus D(n)$.  Given $X\subseteq [n]$ and $z\in [n]\setminus X$, let $S(X)=S(G[X])$ and $S_z(X)=\sum_{x\in X}w(xz)$.  If $G\notin F(n,3,8)$, let $D_1,\ldots, D_k$ be a maximal collection of pairwise disjoint elements of ${[n]\choose 3}$ such that $S(D_i)\geq 9$ for each $i$, and set $D=\bigcup_{i=1}^k D_i$.  If $G\in F(n,3,8)$, set $D=\emptyset$.  If $\mu(G[[n]\setminus D])>3$, choose $e_1,\ldots, e_m$ a maximal collection of  pairwise disjoint elements of ${[n]\setminus D\choose 2}$ such that $S(e_i)\geq 4$ for each $i$ and set $C=\bigcup_{i=1}^m e_i$.  If $\mu(G[[n]\setminus D])\leq 3$, set $C=\emptyset$.  Let $X=D\cup C$ and $\ell=|X|=3k+2m$.  Note that by assumption $X$ is nonempty, so we must have $\ell \geq 2$.  We now make a few observations.  If $D\neq \emptyset$, then for each $D_i$ and $z\in [n]\setminus D_i$, 
$$
S_z(D_i)\leq S(D_i\cup \{z\})-S(D_i)\leq 15-9=6=2\cdot 3,
$$
which implies by Lemma \ref{arithgeom} that  $P^G_z(D_i)\leq 2^3$.  By maximality of the collection $D_1,\ldots, D_k$, $G[[n]\setminus D]$ is a $(3,8)$-graph.  Thus if $C\neq \emptyset$, then for each $i$ and $z\in [n]\setminus (D\cup e_i)$, 
$$
S_z(e_i)\leq S(e_i\cup \{z\})-4 \leq 8-4=4=2\cdot 2,
$$
which implies by Lemma \ref{arithgeom} that $P^G_z(e_i)\leq 2^2$.  Since $\mu(G)\leq 15$, for each $D_i$ and $e_j$, $P(D_i)\leq 15^3$ and $P(e_j)\leq 15$.  Let $Y=[n]\setminus X$ and write $P(Y)$ for $P(G[Y])$.  Our observations imply that $P(G)$ is at most 
\begin{align}\label{ineq5}
P(Y)\Big(\prod_{i=1}^kP(D_i)\Big)\Big(\prod_{i=1}^mP(e_i)\Big)2^{{\ell\choose 2}+\ell(n-\ell)-\ell+m}\leq P(Y)15^{\ell-m}2^{{\ell\choose 2}+\ell(n-\ell)-\ell+m} \leq P(Y)k(n,\ell).
\end{align}
Note that $G[Y]$ is isomorphic to an element of $D(n-\ell)$.   Let $n_0$ be such that Lemma
\ref{extGinCareinNC} holds for all $n>n_0$.  We partition the argument into two cases.
\medskip

\noindent {\bf Case 1.} $n-\ell\le n_0$. In this case we can use the crude bounds 
$$P(G) < 2^{ {\ell \choose 2}} 15^{\ell-m+{n_0 \choose 2}}2^{\ell n_0}
< 2^{{\ell \choose 2}+4\ell+2n_0^2+\ell n_0} < \expi(n,4,15)$$
where the last inequality holds since we may assume that $n$ is much larger than $n_0$ and  $\ell>n-n_0$.  This contradicts the fact that $G \in \calP(n,4,15)$.

\medskip

\noindent {\bf Case 2.} $n-\ell> n_0$. In this case may  apply 
 Lemma \ref{extGinCareinNC} to $G[Y]$ as $|Y|=n-\ell>n_0$. 
 Fix $A\in \calP(W(n-\ell))$. By Lemma \ref{EXTGINBCAPAAREINC}, Lemma \ref{extGinCareinNC}, and Lemma \ref{extNCareinW}, $\calP(W(n-\ell))\subseteq \calP(D(n-\ell))$, which implies that $P(Y)\leq P(A)$.  Combining this with Lemma \ref{GXlem} yields $P(\mathcal{G}_X)\geq P(A)f(n,\ell)\geq P(Y)f(n,\ell)$.  This, along with the bound on $P(G)$ in (\ref{ineq5}), implies
\begin{align*}
\frac{P(G)}{P(\mathcal{G}_X)}\leq \frac{P(Y)k(n,\ell)}{P(Y)f(n,\ell)}=\frac{k(n,\ell)}{f(n,\ell)}<1,
\end{align*}
where the last inequality is by choice of $M$ and Lemma \ref{complem2}.  So $P(G)<P(\mathcal{G}_X)$, a contradiction.  
\qed

\vspace{2mm}

\section{Concluding Remarks} \label{con}

The arguments used to prove Theorem \ref{caseiv} can be adapted to prove a version for sums.  If $G=(V,w)$, let $S(G)=\sum_{xy\in {V\choose 2}}w(xy)$.  Given integers $s\geq 2$ and $q\geq 0$, set 
$$
\exs(n,s,q)=\max\{S(G): G\in F(n,s,q)\}.
$$
An $(n,s,q)$-graph $G$ is \emph{sum-extremal} if $S(G)=\exs(n,s,q)$. Let $\calS(n,s,q)$ denote the set of sum-extremal $(n,s,q)$-graphs with vertex set $[n]$, and let $\calS(W(n))$ denote the set of $G\in W(n)$ such that $S(G)\geq S(G')$ for all $G'\in W(n)$.  Straightforward calculus shows that for $G\in W(n)$, the sum $S(G)$ is maximized when $|L(G)|\approx (2/3)n$.   Then our proofs can be redone for sums to obtain the following theorem.

\begin{theorem}
For all sufficiently large $n$, $\calS(W(n))\subseteq \calS(n,4,15)$.  Consequently 
$$
\exs(n,4,15)=\max\Big\{ 2{\lfloor \frac{2n}{3}\rfloor \choose 2}+3\Big(\Big\lfloor \frac{2n}{3}\Big\rfloor\Big)\Big(\Big\lceil \frac{n}{3}\Big\rceil\Big),2{\lceil \frac{2n}{3}\rceil \choose 2}+3\Big(\Big\lceil \frac{2n}{3}\Big\rceil\Big)\Big(\Big\lfloor \frac{n}{3}\Big\rfloor\Big)\Big\}=\frac{8}{3}{n\choose 2}+O(n).
$$
\end{theorem}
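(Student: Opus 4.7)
My plan is to mirror the proof of Theorem \ref{caseiv}, with products replaced by sums throughout. Concretely, I would prove sum analogues of Theorems \ref{EXTGAREIND} and \ref{EXTGINDAREINW}: for sufficiently large $n$, every $G \in \calS(n,4,15)$ lies in $D(n)$, and $\calS(D(n)) \cap \calS(W(n)) \neq \emptyset$, where $\calS(D(n))$ is defined analogously. Combined, these give $\calS(W(n)) \subseteq \calS(n,4,15)$, and the closed-form maximum then follows from elementary calculus applied to $\binom{|L|}{2} + 2\binom{|R|}{2} + 3|L||R|$ over allowable $L,R$ partitions of $[n]$.

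For the Zykov-style symmetrization (the sum analogue of Section \ref{sectionEXTGINDAREINW}), the substitution operations $G_{xy}$ and $G_{uv,xy}$ are defined exactly as before, and the multiplicative identity (\ref{line1}) is replaced by
\[
S(G_{xy}) - S(G) \;=\; \sigma(y) - \sigma(x) - w(xy) + 1,
\]
where $\sigma(z) = \sum_{u \neq z} w(uz)$ denotes the weighted degree, with an analogous additive identity for $G_{uv,xy}$ taking the place of (\ref{line3*}). The strict-improvement criteria driving Lemmas \ref{triangles1}--\ref{triangles2} and Lemma \ref{EXTGINBCAPAAREINC} then all transfer, since the proofs of Lemmas \ref{replacement} and \ref{123lemma} are purely structural/combinatorial and require no change. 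To reduce from $C(n)$ to its sum-acyclic analogue, one replaces the product functional $f_\pi(\tilde G)$ by the additive functional
\[
f_\Sigma(\tilde G) \;=\; \sum_{UV \in E} 3|U||V| \;+\; \sum_{UV \in \binom{\tilde V}{2} \setminus E} 2|U||V| \;+\; \sum_{V \in \tilde V}\binom{|V|}{2},
\]
which equals $S(G)$ whenever $\tilde G$ is the vertex-weighted graph associated to a neat $G$. The star-conversion and vertex-splitting arguments of Lemmas \ref{starlemma} and \ref{wlemma} now yield additive gains of $|U|(|W|-|V|)$ and $|W|-1$ respectively, each strictly positive under the original hypotheses, so the sum analogue of Lemma \ref{extNCareinW} follows.

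The progressive induction of Section \ref{sectionextGinCareinNC} transfers once one develops additive versions of Lemmas \ref{boundCtproduct}, \ref{computationallem} and \ref{complem2}: one replaces the product bound $P_z^G(X) \leq 3 \cdot 2^{t-1}$ by a linear bound on $\sum_{x \in X} w(zx)$, and the ratio $h(n,t)/f(n,t)$ by a difference $h_\Sigma(n,t) - f_\Sigma(n,t)$ of sum-analogues of $h$ and $f$. The required inequalities are polynomial in $n$ and $t$ and are strictly easier than their multiplicative counterparts, since only an $\Omega(n)$ additive gap is needed rather than a multiplicative factor bounded away from $1$. The main obstacle is the careful recalibration of the thresholds $K, M_1, M_2$ in the progressive induction: although each individual additive inequality is simpler, one must verify that the per-step sum gain dominates the constants accumulated in earlier stages of the induction. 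Once these bounds are established, the sum analogue of Theorem \ref{EXTGAREIND} follows along the lines of Section \ref{sectionEXTGAREIND}, completing the argument.
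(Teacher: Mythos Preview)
Your proposal is correct and takes exactly the approach the paper indicates: the paper does not give a separate proof of this theorem but merely asserts that ``our proofs can be redone for sums to obtain the following theorem,'' and your sketch is a faithful and more explicit account of that transfer. One minor slip: the additive gain in the star-conversion step of Lemma~\ref{starlemma} is $|Y|(|V|-|W|)\geq 0$, not $|U|(|W|-|V|)$, but this does not affect the argument.
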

We would like to point out that the asymptotic value for $\exs(n,4,15)$ was already known as a consequence of \cite{furedikundgen}.  Our contribution is in showing $\calS(W(n))\subseteq \calS(n,4,15)$.  The following result  shows that product-extremal $(n,4,15)$-graphs are far from sum-extremal ones.

\begin{corollary}
There is $\delta>0$ such that for all sufficiently large $n$, the following holds.  Suppose $G\in \calP(n,4,15)$ and $G'\in \calS(n,4,15)$.  Then $G$ and $G'$ are $\delta$-far from one another.
\end{corollary}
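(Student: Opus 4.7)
I aim to prove the corollary by comparing the edge-weight sums of $G$ and $G'$. By Theorem~\ref{EXTGAREIND}, $G \in \calP(D(n))$, so $\mu(G) \leq 3$; moreover $\mu(G') \leq 15$ since $G' \in F(n,4,15)$. Every edge in $\Delta(G,G')$ therefore alters the weight sum by at most $15$, so
\[
|S(G) - S(G')| \leq 15\,|\Delta(G,G')|.
\]
It suffices to produce an $\eta > 0$, independent of $n$, with $S(G') - S(G) \geq \eta n^2$; we then take $\delta := \eta/15$.

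\textbf{The sum gap.} The sum theorem stated at the start of the concluding remarks gives $\exs(n,4,15) = \sigma(t_s)\, n^2 + O(n)$, where $\sigma(t) := \tfrac{1}{2} + 2t - \tfrac{3}{2}t^2$ is the asymptotic $W(n)$-sum per $n^2$ as a function of $|R|/n$, and a direct derivative computation shows $t_s = 2/3$ is its unique maximizer. My goal is to show $S(G) = \sigma(\beta)\, n^2 + o(n^2)$ for $\beta = \log 3/(2\log 3 - \log 2)$ from Theorem~\ref{caseiv}. Since $\sigma$ is strictly concave and $\beta \neq 2/3$, a short computation gives
\[
\sigma(2/3) - \sigma(\beta) = (\beta - \tfrac{2}{3})(\tfrac{3\beta}{2}-1) > 0,
\]
so the required $\eta$ exists. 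For the claim $S(G) = \sigma(\beta)\,n^2 + o(n^2)$: Theorems~\ref{EXTGAREIND} and~\ref{EXTGINDAREINW} give $\calP(n,4,15) = \calP(D(n))$ and $\calP(D(n)) \cap \calP(W(n)) \neq \emptyset$, so some extremal example lies in $W(n)$ with $|R|$ near $\beta n$. A stability version of the reductions in Section~\ref{sectionEXTGINDAREINW}, obtained by tracking the symmetrization steps of Lemmas~\ref{replacement}--\ref{extNCareinW} quantitatively, shows that any $G \in \calP(n,4,15)$ is within edit-distance $o(n^2)$ of some $G_0 \in W(n)$ with $|R(G_0)| \in \{\lfloor \beta n\rfloor, \lceil \beta n\rceil\}$. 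Since multiplicities are bounded, edit-closeness transfers to sum-closeness, giving $S(G) = S(G_0) + o(n^2) = \sigma(\beta)\,n^2 + O(n)$.

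\textbf{Main obstacle.} The technical heart is the quantitative stability above: promoting the qualitative inclusion $\calP(W(n)) \subseteq \calP(n,4,15)$ of Theorem~\ref{caseiv} to the assertion that every product-extremal graph is within $o(n^2)$ edge-edits of a $W(n)$-graph with the product-optimal partition. Section~\ref{sectionEXTGINDAREINW} establishes this qualitatively through the chain $\calP(D(n)) \supseteq \calP(C(n)) = \calP(NC(n)) \supseteq \calP(W(n))$, and upgrading the argument is in the same spirit as the progressive induction of Section~\ref{sectionextGinCareinNC}. As a sanity check that more than $G \in D(n)$ is required: the naive bound combining only $\mu(G) \leq 3$, the triangle-freeness of the multiplicity-$3$ subgraph (forced by $G \in F(n,3,8)$, since a triangle of mult-$3$ edges has weight $9 > 8$), and the product identity $B + C\log_2 3 = \gamma n^2 + O(n)$ (with $B,C$ counting multiplicity-$2$ and $-3$ edges) yields $S(G)/\binom{n}{2} \leq 2.365 + o(1)$, which exceeds $\exs(n,4,15)/\binom{n}{2} \to 7/3 \approx 2.333$ and is therefore useless; genuine structural input beyond $D(n)$-membership is unavoidable.
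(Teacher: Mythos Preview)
Your overall strategy matches the paper's: bound $|S(G)-S(G')|\le 15|\Delta(G,G')|$ and exhibit a constant gap between $S(G')$ and $S(G)$. The gap you leave open, however, is the computation of $S(G)$; you appeal to a quantitative stability result (every $G\in\calP(n,4,15)$ is $o(n^2)$-close to a $W(n)$-graph with $|R|\approx\beta n$) that is nowhere proved in the paper and that you yourself flag as the ``main obstacle.'' This is a genuine gap: the paper establishes only the inclusion $\calP(W(n))\subseteq\calP(n,4,15)$, not that every product-extremal graph is near $W(n)$, and your proposed route of ``tracking the symmetrization steps quantitatively'' is a nontrivial project in its own right.

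The paper bypasses this obstacle entirely with a short number-theoretic trick. By Theorem~\ref{EXTGAREIND}, $G\in D(n)$, so every edge multiplicity lies in $\{1,2,3\}$ (zero is excluded since $P(G)>0$). Hence $P(G)=2^{|E_2(G)|}3^{|E_3(G)|}$. Choosing $H\in\calP(W(n))$, Theorem~\ref{caseiv} gives $P(G)=P(H)=2^{\binom{|R|}{2}}3^{|L||R|}$ with $|R|\in\{\lfloor\beta n\rfloor,\lceil\beta n\rceil\}$. Since $2$ and $3$ are coprime, unique factorisation forces $|E_2(G)|=\binom{|R|}{2}$ and $|E_3(G)|=|L||R|$, and hence $|E_1(G)|=\binom{|L|}{2}$. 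This pins down $S(G)$ \emph{exactly} as $\binom{n}{2}+\binom{|R|}{2}+2|L||R|=\sigma(\beta)n^2+O(n)$, with no stability needed. The comparison $\sigma(\beta)<\sigma(2/3)$ then finishes the proof just as you outlined. So the missing idea is precisely this coprimality observation, which turns an apparent structural problem into two lines of arithmetic.
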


\begin{proof}
Assume $n$ is sufficiently large and $\delta$ is sufficiently small. Suppose towards a contradiction that $G\in \calP(n,4,15)$ and $G'\in \calS(n,4,15)$ are $\delta$-close.  Since $\mu(G),\mu(G')\leq 15$, this implies
\begin{align}\label{slab}
S(G)\geq S(G')-15|\Delta(G,G')| \geq S(G')-15\delta n^2.
\end{align}
Using the asymptotic value of $\exs(n,4,15)$, this implies $S(G)\geq \frac{8}{3}{n\choose 2}-15\delta n^2$.  On the other hand, fix $H\in \calP(W(n))$ and let $L=L(H)$ and $R=R(H)$. Theorem \ref{caseiv} implies $P(G)=P(H)$.  Note Theorem \ref{EXTGAREIND} implies that $\mu(G)\leq 3$.  Thus $P(G)=P(H)=2^{{|R|\choose 2}}3^{|L||R|}=2^{|E_2(G)|}3^{|E_3(G)|}$  (where $E_i(G)$ is the set of edges of multiplicity $i$ in $G$).  Since $2$ and $3$ are relatively prime, this implies $|E_2(G)|={|R|\choose 2}$, $|E_3(G)|=|L||R|$, and $|E_1(G)|={|L|\choose 2}$.  So 
\begin{align*}
S(G)&={|L|\choose 2}+2{|R|\choose 2}+3|L||R|={n\choose 2}+{|R|\choose 2}+2|L||R|.
\end{align*}
Because $H\in \calP(W(n))$, $|R(H)|\leq \beta n+1$ and $|L(H)|\leq (1-\beta )n +1$.  Therefore 
\begin{align*}
S(G)&\leq {n\choose 2}+{\beta n+1\choose 2}+2(\beta n+1)((1-\beta )n+1)=n^2\Big(\frac{1}{2}+2\beta -\frac{3}{2}\beta^2\Big)-n\Big(\frac{4+\beta }{2}\Big)+2.
\end{align*}
But a straightforward computation shows $\frac{1}{2}+2\beta-3\beta^2/2<8/6$, so since $n$ is large and $\delta$ is small,
$$
S(G)<n^2\Big(\frac{1}{2}+2\beta -\frac{3}{2}\beta^2\Big)<\frac{8}{3}{n\choose 2}-15\delta n^2,
$$
contradicting (\ref{slab}).
\end{proof}

Given $a\geq 2$, let $W_a(n)$ be the set of multigraphs $([n],w)$ such that there is a partition $L,R$ of $[n]$ with $w(xy)=a-1$ for all $xy\in {L\choose 2}$, $w(xy)=a$ for all $xy\in {R\choose 2}$, and $w(xy)=a+1$ for all $x\in L$, $y\in R$.  Basic calculus shows that for $G\in W_a(n)$, $P(G)$ is maximized when $|R|\approx \beta_a n$ where $\beta_a=\frac{\log(a+2)-\log(a-1)}{2\log(a+2)-\log a -\log (a-1)}$.  Note that the $W(n)=W_2(n)$.  Based on our results for $(4,15)$, we make the following conjecture.
\begin{conjecture} \label{a}
For all $a\geq 2$, $\calP(W_a(n))\subseteq \calP(n,4,6a+3)$.  Consequently, 
$$
\expi(n,4,6a+3)=2^{\gamma_a n^2+O(n)},
$$
 where $\gamma_a=\frac{(1-\beta_a)^2}{2}\log_2(a-1)+ \frac{\beta_a^2}{2}\log_2 a+\beta_a(1-\beta_a)\log_2(a+2)$.
\end{conjecture}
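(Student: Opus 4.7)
The plan is to mirror the four-stage reduction that proved Theorem~\ref{caseiv} for $a=2$, replacing the numerical multiplicities $\{1,2,3\}$ by $\{a-1,a,a+1\}$ throughout. Concretely, I would define
$$
D_a(n) = F_{\leq a+1}(n,4,6a+3) \cap F(n,3, 3a+2),
$$
where $3a+2$ is the largest triangle-sum realized inside $W_a(n)$ and $a+1$ is its maximum edge multiplicity; then refine to $C_a(n) \subseteq D_a(n)$ by additionally forbidding $(a-1,a-1,a+1)$-, $(a-1,a-1,a)$-, and $(a-1,a,a+1)$-triangles; and finally to $NC_a(n) \subseteq C_a(n)$ by forbidding the cycle multigraph $C_t^{(a)}$ (edges of multiplicity $a+1$ along the cycle, multiplicity $a$ on the chords) for every $t \geq 3$. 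The target inclusions are $\calP(n,4,6a+3) \subseteq \calP(D_a(n))$, $\calP(C_a(n)) \subseteq \calP(D_a(n))$, $\calP(C_a(n)) = \calP(NC_a(n))$ for large $n$, and $\calP(NC_a(n)) \cap W_a(n) \neq \emptyset$, exactly as in the $a=2$ layout.

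To carry out each step I would adapt the corresponding machinery from Sections~\ref{sectionEXTGAREIND}--\ref{sectionextGinCareinNC}. For the first inclusion, following the proof of Theorem~\ref{EXTGAREIND}, I would use Lemma~\ref{arithgeom}-type arithmetic-geometric bounds to limit the product contribution of a vertex outside any bad $3$- or $2$-set and compare with a $W_a$-template $\mathcal{G}_X$; the analog of Lemma~\ref{complem2} would then force any product-extremal $G$ into $D_a(n)$. For the second inclusion, I would generalize the vertex-replacement $G_{uv}$ by setting $w'(uv) = a-1$ and copying multiplicities from $y$ to $x$; Lemma~\ref{replacement} translates essentially verbatim because its proof only uses $\mu \leq a+1$, $S \leq 3a+2$ on $3$-sets, and $S \leq 6a+3$ on $4$-sets. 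The triangle-elimination lemmas then generalize: $(a-1,a-1,a)$- and $(a-1,a-1,a+1)$-triangles yield immediate multiplicative gains through equation (\ref{line1}), while $(a-1,a,a+1)$-triangles require the same $\Gamma(G)$-counting device of Lemma~\ref{123lemma}. The third inclusion proceeds through the Zykov-like quotient: define $x \sim_G y \Leftrightarrow w(xy) = a-1$, which is an equivalence relation precisely because the forbidden triangle patterns of $C_a(n)$ are the non-transitive ones, and the vertex-weighted quotient $\tildeG$ must be a forest (the analog of Lemma~\ref{subgraph} re-examines which $4$-configurations fit within budget $6a+3$). Finally, for the progressive induction that removes cycles, I would adapt Lemmas~\ref{boundCtproduct}--\ref{smallt}: each external vertex contributes at most $(a+1) a^{t-1}$ to the product over a $C_t^{(a)}$ subgraph, yielding a generalized $h_a(n,t) = (a+1)^n a^{{t\choose 2}+t(n-t)-n}$ that must be beaten by the $W_a$-based $f_a(n,t)$ once a suitable computational lemma is verified.

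The principal obstacle will be the analytic verification that the generalized analogs of Lemmas~\ref{computationallem} and~\ref{complem2} hold for every $a \geq 2$: the $a=2$ case is handled in the appendix by explicit numerical bounds, but for general $a$ the ratios $h_a/f_a$ and $k_a/f_a$ become functions of $\log(a-1),\log a,\log(a+1),\log(a+2)$ whose sign and monotonicity in both $t$ and $a$ require delicate calculus. A secondary but real obstacle is that the symmetrization for $a=2$ exploits the fact that $w(uv)=1$ makes the factor $1/w(uv)$ trivial in (\ref{line1}); for $a \geq 3$ the base multiplicity $a-1 \geq 2$ appears in the denominator, so inequalities such as $p(y)/p(x) > w(xy) = a-1$ are no longer automatic, and the case analysis in the proofs of Lemmas~\ref{triangles1} and~\ref{123lemma} must be sharpened, perhaps by coupling the replacement with an auxiliary step that first boosts $p(y)$ before replacement. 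Once both obstacles are resolved, the asymptotic formula $\expi(n,4,6a+3) = 2^{\gamma_a n^2 + O(n)}$ follows from the analog of Lemma~\ref{extW} applied to $W_a(n)$, completing the conjecture.
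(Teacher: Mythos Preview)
This statement is a \emph{conjecture} in the paper, not a theorem: the authors explicitly leave it open and, in the paragraph following it, point out exactly why the $a=2$ machinery does not transfer directly. So there is no proof in the paper to compare against, and your proposal should be read as an outline of an attack on an open problem rather than as a reconstruction of an existing argument.

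Your plan faithfully mirrors the four-stage reduction of the $a=2$ case, and you have correctly identified one of the two obstructions the authors themselves flag: in Lemma~\ref{triangles1} (and in equations~(\ref{line1})--(\ref{line3*})) the argument relies essentially on $w(uv)=1$, so that the factor $w(uv)$ disappears from the denominator. For $a\ge 3$ one has $w(uv)=a-1\ge 2$, and the inequalities $p(v)/p(u)>1$ or $w(vz)p(u)^2\ge p(v)p(z)$ no longer suffice; you note this and propose strengthening the case analysis, which is reasonable but not yet a proof.

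There is, however, a second obstruction that your outline does not address and that the paper singles out explicitly: the presence of \emph{small} edge multiplicities, namely those in $\{1,\dots,a-2\}$. Your chain $D_a(n)\supseteq C_a(n)\supseteq NC_a(n)$ imposes only the upper bound $\mu(G)\le a+1$ and the triangle budget $3a+2$; nothing forces multiplicities to be at least $a-1$. Consequently an element of $C_a(n)$ may have edges of multiplicity $1,\dots,a-2$, and then the relation $x\sim_G y \Leftrightarrow w(xy)=a-1$ is \emph{not} an equivalence relation (your ``neatness'' argument breaks because the forbidden triangle list only controls patterns within $\{a-1,a,a+1\}$), so the passage to the vertex-weighted quotient $\tilde G$ and the forest analysis of Lemma~\ref{subgraph} both collapse. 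For $a=2$ this issue is vacuous because the only sub-minimal multiplicity is $0$, which kills the product; for $a\ge 3$ an additional reduction step, prior to the triangle-elimination lemmas, is required to show that a product-extremal graph cannot use multiplicities below $a-1$. Until that step is supplied, the later stages of your plan do not even get off the ground.
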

When $a=2$, this is Theorem \ref{caseiv}.  However, at least some of the arguments used in this paper will not transfer immediately to cases with $a>2$.  For instance, the proof of Lemma \ref{triangles1} uses the fact that $a=2$ in a nontrivial way (in particular it is key there that the smallest multiplicity appearing in $W(n)$ is $1$).  Further, when $a>2$, one must contend with ``small'' edge multiplicities, that is, those in $\{i: 1\leq i< a-2\}$.  This is not an issue for $(4,15)$ since this set is empty.

\section{Acknowledgments}
We wish to thank Ping Hu for doing some Flag Algebra computations for us. The first author thanks Sasha Razborov for some helpful short remarks about presentation and the efficacy of the methods in~\cite{Razborov}.

\section{Appendix}

For ease of notation, we will write $x=\beta $ for the rest of this section.  For any $r\in\mathbb{R}$, ${r\choose 2}=\frac{r^2-r}{2}$.  Recall that given $n,t\in \mathbb{N}$
$$
f(n,t)=\min\Big\{ 2^{{\lceil  \beta t\rceil \choose 2}+\lceil  \beta t\rceil c}3^{\lceil \beta t\rceil \lfloor (1-\beta)t\rfloor+c\lfloor (1-\beta)t\rfloor+\lceil  \beta t\rceil (n-t-c)}: c\in \{\lfloor \beta (n-t)\rfloor, \lceil \beta (n-t)\rceil\}\Big\}.
$$

\noindent Given $2\leq t\leq n$, let 
$$
f_{*}(n,t)=2^{{xt\choose 2}+x^2t(n-t)}3^{2xt(1-x)(n-t)+x(1-x)t^2}.
$$  

\begin{proposition}\label{prop1}
For all $2\leq t\leq n$, $f(n,t)\geq f_*(n,t)2^{-xt-3/2}3^{-t-1}$.
\end{proposition}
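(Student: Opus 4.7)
The plan is to track exactly how the floors and ceilings defining $f(n,t)$ perturb the continuous exponents appearing in $f_*(n,t)$. Set $a=\lceil xt\rceil$ and $b=\lfloor(1-x)t\rfloor$, so $a+b=t$ and I can write $a=xt+\delta_1$, $b=(1-x)t-\delta_1$ with $0\le\delta_1<1$. For either choice of $c\in\{\lfloor x(n-t)\rfloor,\lceil x(n-t)\rceil\}$, write $c=x(n-t)+\epsilon$ with $|\epsilon|<1$. I will establish the claimed inequality for each such $c$; this suffices, since $f(n,t)$ is defined as the minimum over the two.

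The first step is a direct expansion of the exponent discrepancies
\[
\Delta_2 = \binom{a}{2}+ac-\binom{xt}{2}-x^2t(n-t),\quad \Delta_3 = ab+a(n-t-c)+bc-x(1-x)t^2-2x(1-x)t(n-t).
\]
Substituting $a=xt+\delta_1$, $b=(1-x)t-\delta_1$, $c=x(n-t)+\epsilon$ and collecting by powers of $\delta_1$ and $\epsilon$, these simplify to
\[
\Delta_2 = x\delta_1 n+xt\epsilon+\binom{\delta_1}{2}+\delta_1\epsilon,\quad \Delta_3 = \delta_1(1-2x)n+(1-2x)t\epsilon-\delta_1^2-2\delta_1\epsilon.
\]
The main point, and what I expect to be the only nontrivial step, is that $x=\beta$ satisfies $\beta(2\log 3-\log 2)=\log 3$, and hence $2^{x}\cdot 3^{1-2x}=1$. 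Consequently the two \emph{a priori} catastrophic linear-in-$n$ contributions cancel:
\[
2^{x\delta_1 n}\cdot 3^{\delta_1(1-2x)n}=\bigl(2^x\cdot 3^{1-2x}\bigr)^{\delta_1 n}=1,
\]
and similarly $2^{xt\epsilon}\cdot 3^{(1-2x)t\epsilon}=1$.

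After these cancellations only the bounded remainder survives:
\[
\frac{f(n,t)}{f_*(n,t)}=2^{\Delta_2}\cdot 3^{\Delta_3}=2^{\binom{\delta_1}{2}+\delta_1\epsilon}\cdot 3^{-\delta_1^2-2\delta_1\epsilon}.
\]
Crude estimates using $0\le\delta_1<1$ and $|\epsilon|<1$ now finish the job: $\binom{\delta_1}{2}+\delta_1\epsilon\ge -1/8-1\ge -xt-3/2$ (since $xt\ge 0$), and $-\delta_1^2-2\delta_1\epsilon\ge -1-2\ge -(t+1)$ for $t\ge 2$. Combining these two one-line bounds yields $f(n,t)\ge f_*(n,t)\cdot 2^{-xt-3/2}\,3^{-t-1}$, as desired. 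The only delicate step is the cancellation of the $\Theta(n)$ terms: without the precise identity $2^{\beta}=3^{2\beta-1}$ singling out $\beta$, the term $\delta_1(1-2x)n$ appearing in $\Delta_3$ with $1-2x<0$ would render the proposition false. Everything else is bookkeeping.
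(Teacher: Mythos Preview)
Your proof is correct and follows essentially the same strategy as the paper: parametrize the rounding errors, expand the exponent discrepancies, and use the identity $2^{\beta}\cdot 3^{1-2\beta}=1$ (equivalently the paper's $-x-\log_23+2x\log_23=0$) to kill the terms growing with $n$. The one genuine refinement is that the paper crudely replaces $c$ and $n-t-c$ by the simultaneous lower bounds $x(n-t)-1$ and $(1-x)(n-t)-1$, which injects an artificial $-xt$ into the $2$-exponent and a $-t$ into the $3$-exponent; you instead keep $c=x(n-t)+\epsilon$ exact and invoke the identity a second time to annihilate the $t\epsilon$ contributions. This yields the sharper intermediate bound $f(n,t)/f_*(n,t)\ge 2^{-9/8}3^{-3}$, a universal constant, which you then deliberately weaken to match the stated inequality. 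A small notational slip: after the cancellations you write $f(n,t)/f_*(n,t)=2^{\binom{\delta_1}{2}+\delta_1\epsilon}3^{-\delta_1^2-2\delta_1\epsilon}$, but this equality is for the expression at the given $c$, not for $f(n,t)$ itself; the logic is nonetheless fine since you already noted it suffices to bound each choice of $c$ separately.
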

\begin{proof}
By definition of $x$, $x(2\log 3-\log 2 )=\log3$.  Dividing both sides of this by $\log2$ and rearranging yields 
\begin{align}\label{fact1}
-x-\log_23+2x\log_23=0.
\end{align}
Fix $2\leq t\leq n$ and let $a=\lceil xt\rceil -xt$.  Define $\eta(u,v,z,w)=2^{{u\choose 2}+uz}3^{uw+vz+uv}$ and observe that
\begin{align}
f(n,t)=&\min \{ \eta(\lceil x t \rceil, \lfloor (1-x)t\rfloor, y, n-t-y): y\in \lceil x (n-t)\rceil, \lfloor x (n-t)\rfloor\}\}\nonumber\\
=&\min \{ \eta(xt+a,(1-x)t-a, y, n-t-y): y\in \lceil x (n-t)\rceil, \lfloor x (n-t)\rfloor\}\}.\label{app}
\end{align}
Note that for all $y\in \{ \lceil x (n-t)\rceil, \lfloor x (n-t)\rfloor\}$, $y\geq x(n-t)-1$ and $n-t-y\geq (1-x)(n-t)-1$.  Combining this with (\ref{app}) and the definition of $\eta(u,v,z,w)$, we have
\begin{align}\label{line}
f(n,t)\geq \eta(xt+a, (1-x)t-a, x(n-t)-1, (1-x)(n-t)-1).
\end{align}
We leave it to the reader to verify that the righthand side of (\ref{line}) is equal to $f_{*}(n,t)2^{g_1(n,t)}3^{g_2(n,t)}$, where $g_1(n,t)=\frac{a^2}{2}-\frac{3a}{2}-xt+axn$ and $g_2(n,t)=-2axn+an-t-a^2$.
Observe 
\begin{align*}
g_1(n,t)+g_2(n,t)\log_23&=an\Big(x+\log_23-2x\log_23\Big)+\frac{a^2}{2}-\frac{3a}{2}-xt-(t+a^2)\log_23\\
&=\frac{a^2}{2}-\frac{3a}{2}-xt-(t+a^2)\log_23,
\end{align*}
where the second equality is by (\ref{fact1}).  Since $0\leq a\leq 1$, $\frac{a^2}{2}-\frac{3a}{2}=\frac{a}{2}(a-3)\geq \frac{a}{2}(-3)\geq -3/2$ and $-a^2\geq -1$.  So 
$$
g_1(n,t)+g_2(n,t)\log_23\geq -\frac{3}{2}-xt-(t+1)\log_23.
$$
Thus $f(n,t)\geq f_{*}(n,t)2^{g_1(n,t)}3^{g_2(n,t)}\geq f_{*}(n,t)2^{-\frac{3}{2}-xt}3^{-t-1}$, as desired.
\end{proof}

Recall that given $n,t\in \mathbb{N}$, let $h(n,t)=3^n2^{{t\choose 2}+t(n-t)-n}$.

\begin{proposition}\label{prop2}
Let $2\leq t\leq n$.  Then $h(n,t)/f(n,t)\leq 2^{C_1(n,t)}3^{C_2(n,t)}$, where 
$$
C_1(n,t)=\frac{t^2}{2}(x^2-1)+\frac{t}{2}(3x-1)+tn(1-x^2)-n+\frac{3}{2}\quad \text{and}\quad C_2(n,t)=n-2x(1-x)tn+x(1-x)t^2+t+1.
$$
\end{proposition}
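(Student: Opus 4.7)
The proof is a direct computation: we simply substitute the bound from Proposition \ref{prop1} into the ratio $h(n,t)/f(n,t)$ and collect the exponents of $2$ and $3$.

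First I would apply Proposition \ref{prop1} to get
\[
\frac{h(n,t)}{f(n,t)} \;\le\; \frac{h(n,t)\cdot 2^{xt+3/2}\cdot 3^{t+1}}{f_{*}(n,t)}.
\]
Using $h(n,t)=3^n 2^{\binom{t}{2}+t(n-t)-n}$ and $f_{*}(n,t)=2^{\binom{xt}{2}+x^2t(n-t)}3^{2xt(1-x)(n-t)+x(1-x)t^2}$, the right-hand side equals $2^{E_2}3^{E_3}$, where
\[
E_2=\binom{t}{2}+t(n-t)-n+xt+\tfrac{3}{2}-\binom{xt}{2}-x^2t(n-t),
\]
\[
E_3=n+t+1-2xt(1-x)(n-t)-x(1-x)t^2.
\]
It then suffices to check $E_2=C_1(n,t)$ and $E_3=C_2(n,t)$.

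For $E_2$, I would use $\binom{t}{2}=\frac{t^2-t}{2}$ and $\binom{xt}{2}=\frac{x^2t^2-xt}{2}$ so that
\[
\binom{t}{2}-\binom{xt}{2}=\tfrac{t^2(1-x^2)}{2}-\tfrac{t(1-x)}{2},
\]
and expand $t(n-t)(1-x^2)=tn(1-x^2)-t^2(1-x^2)$ to cancel the $t^2(1-x^2)/2$ term against $-t^2(1-x^2)$, leaving $-\tfrac{t^2(1-x^2)}{2}=\tfrac{t^2(x^2-1)}{2}$. Then collecting the linear-in-$t$ terms gives $xt-\tfrac{t(1-x)}{2}=\tfrac{t(3x-1)}{2}$, so
\[
E_2=\tfrac{t^2(x^2-1)}{2}+\tfrac{t(3x-1)}{2}+tn(1-x^2)-n+\tfrac{3}{2}=C_1(n,t).
\]

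For $E_3$, I would expand $2xt(1-x)(n-t)=2x(1-x)tn-2x(1-x)t^2$ and combine with the $-x(1-x)t^2$ term to get $-2x(1-x)tn+x(1-x)t^2$. Adding back $n+t+1$ yields
\[
E_3=n-2x(1-x)tn+x(1-x)t^2+t+1=C_2(n,t),
\]
completing the verification. There is no real obstacle here; the only thing to watch is keeping the signs straight when moving between $1-x^2$ and $x^2-1$, and remembering that the $-\tfrac{3}{2}$ and $-(t+1)$ in Proposition \ref{prop1} flip sign when moved to the numerator.
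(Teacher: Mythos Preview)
Your proof is correct and follows exactly the same approach as the paper: apply Proposition~\ref{prop1} to bound $f(n,t)$ from below by $f_*(n,t)2^{-xt-3/2}3^{-t-1}$, then divide $h(n,t)$ by this lower bound and simplify the resulting exponents of $2$ and $3$. The only difference is that you carry out the algebraic simplification explicitly, whereas the paper leaves the final simplification to the reader.
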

\begin{proof}
Fix $2\leq t\leq n$.  Proposition \ref{prop1} and the definition of $h(n,t)$ implies 
\begin{align}\label{line2}
\frac{h(n,t)}{f(n,t)}\leq \frac{3^n2^{{t\choose 2}+t(n-t)-n}}{f_{*}(n,t)2^{-3/2-xt}3^{-t-1}}.
\end{align}
Plugging in $f_*(n,t)$ to the right hand side of (\ref{line2}) yields that $h(n,t)/f(n,t)\leq 2^{C_1(n,t)}3^{C_2(n,t)}$ where 
\begin{align*}
C_1(n,t)&={t\choose 2}+t(n-t)-n-\Big({xt\choose 2}+x^2t(n-t)-3/2-xt\Big)\text{ and }\\
C_2(n,t)&=n-\Big(x(1-x)t^2+2x(1-x)t(n-t)-t-1\Big).
\end{align*}
Simplifying these expressions finishes the proof.
\end{proof}

\noindent We now prove the following three inequalities.
\begin{enumerate}[(I)]
\item $2^{1-x^2}<3^{1.5x(1-x)}$. \label{line3}
\item $3^{(2/3)x(1-x)}<2^{(1-x^2)/2}$.\label{line4}
\item $5(1-x^2-2x(1-x)\log_23)+\log_23-1<0$.\label{line5}
\end{enumerate}
We will use the following bounds for $\log2$ and $\log3$ which come from the On-Line Encyclopedia of Integer Sequences, published electronically at \text{http://oeis.org} (Sequences A002162 and A002391 respectively).
\begin{align}
.693<\log 2<.694 \qquad \text{ and }\qquad 1.098<\log 3< 1.099 \label{log3}.
\end{align}

\noindent For (I), note that $2^{1-x^2}=2^{(1-x)(1+x)}<3^{1.5x(1-x)} \Leftrightarrow 2^{1+x}<3^{1.5x}\Leftrightarrow (1+x)\log 2<1.5x\log 3$.  Solving for $x$ yields that this is equivalent to
\begin{align}\label{(I)}
\frac{\log 2}{1.5\log 3-\log 2}=\frac{2\log 2}{3\log 3-2\log 2}<x=\frac{\log3}{2\log 3-\log2}.
\end{align}
Clearing out the denominators, (\ref{(I)}) holds if and only if 
\begin{align}\label{ineq7}
4\log 3\log2-2(\log 2)^2<3(\log 3)^2-2\log 2\log 3 \Leftrightarrow 6\log2\log3-3(\log3)^2-2(\log2)^2<0.
\end{align}
By (\ref{log3}), $6\log2\log3-3(\log3)^2-2(\log2)^2<6(.694)(1.099)-3(1.098)^2-2(.693)^2<0$.  Thus the righthand inequality in (\ref{ineq7}) holds, which finishes the proof of (\ref{line3}).  For (\ref{line4}), note that
$$
3^{(2/3)x(1-x)}<2^{(1-x^2)/2}=2^{(1-x)(1+x)/2} \Leftrightarrow 3^{2x/3}<2^{(1+x)/2}\Leftrightarrow \frac{2x}{3}\log 3< \frac{(1+x)\log 2}{2}.
$$
Rearranging and plugging in for $x$, this becomes 
$$
\frac{\log 3}{2\log 3-\log 2}=x<\frac{3\log 2}{4\log 3-3\log 2}.
$$
By clearing denominators, we have that this inequality holds if and only if 
\begin{align}\label{lg}
4(\log 3)^2-3\log 3\log 2<6\log 3\log 2-3(\log 2)^2\Leftrightarrow 4(\log3)^2-9\log2\log3+3(\log2)^2<0.
\end{align}
By (\ref{log3}), $4(\log3)^2-9\log2\log3+3(\log2)^2<4(1.099)^2-9(.693)(1.098)+3(.694)^2<0$.  Thus the righthand inequality in (\ref{lg}) holds, which finishes the proof of (\ref{line4}).   We now prove (\ref{line5}).  By rearranging the left hand side, (\ref{line5}) is equivalent to
$$
5x^2(2\log_23-1)-10x\log_23+\log_23+4<0.
$$
Multiplying by $\log 2$, this becomes $5x^2(2\log3-\log2)-10x\log3 +\log3+4\log2<0$.  Plugging in for $x$ and simplifying, this is equivalent to
\begin{align}\label{lg1}
\frac{-5(\log3)^2}{2\log3-\log2}+\log3+4\log2<0\Leftrightarrow -3(\log3)^2+7\log2\log3-4(\log2)^2<0,
\end{align}
where the ``$\Leftrightarrow$'' is from clearing the denominators of, then rearranging the lefthand inequality.  By (\ref{log3}), $-3(\log3)^2+7\log2\log3-4(\log2)^2<-3(1.098)^2+7(.694)(1.099)-4(.693)^2<0$, thus the righthand inequality in (\ref{lg1}) holds, which finishes the proof of (\ref{line5}).

\vspace{5mm}

\noindent{\bf Proof of Lemma \ref{computationallem}.}
Given $n,t\in \mathbb{N}$, let $p(n,t)=(-\frac{x}{6}(1-x)t+2)n+2$.  Choose $K$ sufficiently large so that $n\geq t\geq K$ implies $p(n,t)\leq p(n,K)<0$.  We now prove part 1 for this $K$.  Fix $K\leq t\leq n$.  By Proposition \ref{prop2}, $h(n,t)/f(n,t)\leq 2^{C_1(n,t)}3^{C_2(n,t)}$.  Note that 
$$
C_1(n,t)=(1-x^2)tn+D_1(n,t)\qquad \hbox{ and } \qquad C_2(n,t)=-1.5x(1-x)tn+D_2(n,t)
$$
where $D_1(n,t)=\frac{t^2}{2}(x^2-1) +\frac{t}{2}(3x-1)-n+3/2$ and $D_2(n,t)=-.5x(1-x)tn+ x(1-x)t^2+n+t+1$. 
Therefore
\begin{align*}
2^{C_1(n,t)}3^{C_2(n,t)} =\Big(\frac{2^{1-x^2}}{3^{1.5x(1-x)}}\Big)^{tn}2^{D_1(n,t)}3^{D_2(n,t)}\leq 2^{D_1(n,t)}3^{D_2(n,t)},
\end{align*}
where the inequality is because by (\ref{line3}), $\frac{2^{1-x^2}}{3^{1.5x(1-x)}}\leq 1$.  Now note that 
$$
D_1(n,t)=\frac{t^2}{2}(x^2-1) +E_1(n,t) \quad \hbox{ and }\quad D_2(n,t)= -(x/3)(1-x)tn+x(1-x)t^2+E_2(n,t),
$$
where $E_1(n,t)=\frac{t}{2}(3x-1)-n+3/2$ and $E_2(n,t)=-(x/6)(1-x)tn +n+t+1$. Since $n\geq t$, we have
$$
-(x/3)(1-x)tn+x(1-x)t^2\leq -(x/3)(1-x)t^2+x(1-x)t^2= (2x/3)(1-x)t^2,
$$
so $D_2(n,t)\leq (2x/3)(1-x)t^2+E_2(n,t)$.
Thus
\begin{align*}
2^{D_1(n,t)}3^{D_2(n,t)} \leq \Big(\frac{3^{(2/3)x(1-x)}}{2^{(1-x^2)/2}}\Big)^{t^2}2^{E_1(n,t)}3^{E_2(n,t)}\leq 2^{E_1(n,t)}3^{E_2(n,t)},
\end{align*}
where the last inequality is because by (\ref{line4}), $\frac{3^{(2/3)x(1-x)}}{2^{(1-x^2)/2}}\leq 1$.  Note that since $3x-1<2$, $n\geq t$ and $3/2\leq \log_23$,
$$
E_1(n,t)= \frac{t}{2}(3x-1)-n+3/2\leq t-t+\log_23=\log_23.
$$
Since $5\leq t\leq n$, $E_2(n,t)=-(x/6)(1-x)tn +n+t+1 \leq -(x/6)(1-x)tn +2n+1$.  Therefore,
\begin{align*}
2^{E_1(n,t)}3^{E_2(n,t)}\leq 2^{\log_23}3^{-(x/6)(1-x)tn +2n+1}=3^{-(x/6)(1-x)tn +2n+2}=3^{p(n,t)}<1.
\end{align*}
where the inequality is by assumption on $K\leq t\leq n$.  This finishes the proof of part 1. We now prove part 2.  By definition of $C_1(n,t)$ and $C_2(n,t)$, there are polynomials $q_1(t)$ and $q_2(t)$ such that
$$
C_1(n,t)=tn(1-x^2) -n+q_1(t)\qquad \hbox{ and }\qquad C_2(n,t)=n-2x(1-x)tn+q_2(t).
$$
Set $q(t)=q_1(t)+q_2(t)\log_23$ and choose $T$ so that for all $5\leq t\leq K$, $|q(t)|\leq T$.  Set 
$$
\gamma=-\frac{1}{2}\Big(5(1-x^2-2x(1-x)\log_23)+\log_23-1\Big).
$$
Observe that (\ref{line5}) implies $\gamma>0$.  Choose $M_1\geq K$ so that for all $5\leq t\leq K$, $-2\gamma n+T\leq -\gamma n$.  We show $\frac{h(n,t)}{f(n,t)}<2^{-\gamma n}$ for all $5\leq t\leq K$ and $n\geq M_1$.  Fix $5\leq t\leq K$ and $n\geq M_1$.  By Proposition \ref{prop2} and the definitions of $q(t)$ and $T$,
$$
\frac{h(n,t)}{f(n,t)}\leq 2^{C_1(n,t)}3^{C_2(n,t)}=2^{n(t(1-x^2-2x(1-x)\log_23)+\log_23-1)+q(t)}\leq 2^{n(t(1-x^2-2x(1-x)\log_23)+\log_23-1)+T}.
$$
By (\ref{line5}), $(1-x^2-2x(1-x)\log_23)<1-\log_23<0$ so since $t\geq 5$, 
$$
t(1-x^2-2x(1-x)\log_23)+\log_23-1\leq 5(1-x^2-2x(1-x)\log_23)+\log_23-1= -2\gamma.
$$
Combining all this yields $\frac{h(n,t)}{f(n,t)}\leq 2^{-2\gamma n+T}<2^{-\gamma n}$, where the last inequality is by choice of $M_1$.\qed
 
\vspace{4mm}

\noindent{\bf Proof of Lemma \ref{complem2}.}
Recall we want to show there is $M$ such that for all $n\geq M$ and $2\leq t\leq n$, $k(n,t)<f(n,t)$, where $k(n,t)=15^t2^{{t\choose 2}+t(n-t)-t}$.  Let $K$ be from Lemma \ref{computationallem} and recall the proof of Lemma \ref{computationallem} showed that for all $K\leq t\leq n$, $h(n,t)/f(n,t)\leq 3^{p(n,t)}$, where 
$$
p(n,t)=-(x/6)(1-x)tn+2n+2.
$$
Choose $K'\geq K$ such that $K'\leq t\leq n$ implies $p(n,t)<-100n+2<-98n$.  Suppose now that $K'\leq t\leq n$.  Then by definition of $k(n,t)$ and since $h(n,t)/f(n,t)\leq 3^{p(n,t)}<3^{-98n}$, 
$$
\frac{k(n,t)}{f(n,t)}=\frac{(15/2)^t(2/3)^nk(n,t)}{f(n,t)}\leq (15/2)^t(2/3)^n3^{p(n,t)}\leq 3^{p(n,t)+4n}<3^{-94n}<1.
$$
Thus the Lemma holds for all $K'\leq t\leq n$.  Suppose now that $2\leq t\leq K'$ and $n\geq t$.  By Proposition \ref{prop1} and definition of $k(n,t)$,
$$
\frac{k(n,t)}{f(n,t)}\leq \frac{15^t2^{{t\choose 2}+t(n-t)-t}}{f_*(n,t)2^{-xt-3/2}3^{-t-1}}=2^{G_1(n,t)}3^{G_2(n,t)},
$$
where $G_1(n,t)$ and $G_2(n,t)$ are the appropriate polynomials in $n$ and $t$.  Using the definition of $f_*(n,t)$, we see that for some polynomials $r_1(t)$ and $r_2(t)$ in $t$,
$$
G_1(n,t)=tn-x^2 tn +r_1(t)\qquad \hbox{ and }\qquad G_2(n,t)=-2x(1-x)tn+r_2(t).
$$
Let $r(t)=r_1(t)+r_2(t)\log_23$ and let $T'$ be such that for all $2\leq t'\leq K'$, $|r(t)|\leq T'$ .  Then for all $2\leq t\leq K'$,
$$
G_1(n,t)+G_2(n,t)\log_23\leq tn(1-x^2-2x(1-x)\log_23)+T'.
$$
By (\ref{line5}), $1-x^2-2x(1-x)\log_23<0$, so we can choose $M$ sufficiently large so that if $n>M$, then $n(1-x^2-2x(1-x)\log_23)+T'<0$.  Then for all $2\leq t\leq K'$ and $n\geq M, t$, 
$$
\frac{k(n,t)}{f(n,t)}\leq 2^{nt(1-x^2-2x(1-x)\log_23)+T'}<2^{n(1-x^2-2x(1-x)\log_23)+T'}<1.
$$
Thus $\frac{k(n,t)}{f(n,t)}<1$ for all $n\geq \max\{M, K'\}$ and $2\leq t\leq n$.  
\qed

\bibliography{refs.bib}

\bibliographystyle{amsplain}

\end{document}